\documentclass{birkjour_t2}
 \newtheorem{thm}{Theorem}[section]
 \newtheorem{cor}[thm]{Corollary}
 \newtheorem{lem}[thm]{Lemma}
 \newtheorem{prop}[thm]{Proposition}
 \theoremstyle{definition}
 
 \theoremstyle{remark}

 \numberwithin{equation}{section}

\usepackage[
  colorlinks=true,
  linkcolor={red!55!black},      %
  citecolor={orange!60!black},   %
  urlcolor={orange!70!black}     %
]{hyperref}
\usepackage{enumitem}
\usepackage[msc-links]{amsrefs}
\usepackage{mathtools}
\usepackage{amsmath,amssymb}
\usepackage{standalone}
\usepackage{tikz}
\usetikzlibrary{calc, arrows.meta, decorations.pathreplacing, patterns}
\usepackage{bm}
\usepackage[normalem]{ulem}
\usepackage{cancel}

\usepackage{xcolor}
\usepackage{tcolorbox}

\newcommand{\Wellposedness}{Well-posedness}
\newcommand{\cf}{cf.}

\newcommand{\C}{\mathbb{C}}
\newcommand{\R}{\mathbb{R}}
\newcommand{\N}{\mathbb{N}}

\newcommand{\statespace}{\bm{\mathcal{H}}}
\newcommand{\energyspace}{\bm{\mathcal{V}}}

\newcommand{\redstatespace}{\bm{\mathsf{H}}}
\newcommand{\redenergyspace}{\bm{\mathsf{V}}}

\DeclarePairedDelimiterX\setc[2]{\{}{\}}{\,#1 \;\delimsize\vert\; #2\,}
\DeclarePairedDelimiter\monosetc{\{}{\}}

\newcommand{\jmp}[1]{[\![#1]\!]}

\newcommand{\vve}{{\bm{e}}}
\newcommand{\vvn}{{\bm{n}}}
\newcommand{\vvnu}{{\bm{\nu}}}
\newcommand{\vvH}{{\bm{H}}}
\newcommand{\vvE}{{\bm{E}}}
\newcommand{\vvU}{{\bm{U}}}
\newcommand{\vvu}{{\bm{u}}}
\newcommand{\vvV}{{\bm{V}}}
\newcommand{\vvW}{{\bm{W}}}

\newcommand{\vvv}{{\bm{v}}}
\newcommand{\vvJ}{{\bm{J}}}
\newcommand{\vvF}{{\bm{F}}}
\newcommand{\vvG}{{\bm{G}}}
\newcommand{\vvR}{{\bm{R}}}

\newcommand{\vvL}{{\bm{L}}}
\newcommand{\vvPhi}{{\bm{\Phi}}}
\newcommand{\vvPsi}{{\bm{\Psi}}}
\newcommand{\vvTheta}{{\bm{\Theta}}}
\newcommand{\vvP}{{\bm{P}}}
\newcommand{\vvLambda}{\bm{\Lambda}}

\DeclareMathOperator{\curl}{\mathbf{curl}}
\DeclareMathOperator{\scurl}{curl}
\renewcommand{\div}{\operatorname{div}}
\DeclareMathOperator{\grad}{\mathbf{grad}}
\DeclareMathOperator{\pcurl}{\widetilde{\curl}}
\DeclareMathOperator{\pdiv}{\widetilde{\div}}

\newcommand{\imp}{{\operatorname{imp}}}

\newcommand{\kernel}{\operatorname{Ker}}
\newcommand{\range}{\operatorname{Range}}

\NewDocumentCommand{\restrict}{s O{} m m}{
    \IfBooleanTF{#1}{
        \left.{#3}\right|_{#4}
    }{
        {#3}#2|_{#4}
    }
}

\newcounter{saveeq}      %
\newcounter{savesubeq}   %
\newcounter{savecureq}   %

\newcommand{\subeqpause}{%
  \setcounter{saveeq}{\value{parentequation}}%
  \setcounter{savesubeq}{\value{equation}}%
}

\newenvironment{resumesubeqs}{%
  \setcounter{savecureq}{\value{equation}}%
  \setcounter{equation}{\value{saveeq}}%
  \addtocounter{equation}{-1}%
  \begin{subequations}%
  \setcounter{equation}{\value{savesubeq}}%
}{%
  \end{subequations}%
  \setcounter{equation}{\value{savecureq}}%
}

\begin{document}

\title[Maxwell's equations with interface damping]
 {\Wellposedness{}, regularity and stability for linear Maxwell's equations with interface damping}

\author[Dörner]{Julian Dörner}
\address{%
  Department of Mathematics\\
  Karlsruhe Institute of Technology\\
  P.O. Box 6980\\
  76049 Karlsruhe,  Germany 
}
\email{julian.doerner@kit.edu}

\author[Nicaise]{Serge Nicaise}
\address{%
  Univ. Polytechnique Hauts-de-France\\
  INSA Hauts-de-France\\
  CERAMATHS - Laboratoire de Matériaux Céramiques et de Mathématiques\\
 F-59313 Valenciennes, France}
\email{Serge.Nicaise@uphf.fr}

\author[Schnaubelt]{Roland Schnaubelt}
\address{%
  Department of Mathematics\\
  Karlsruhe Institute of Technology\\
  P.O. Box 6980\\
  76049 Karlsruhe,  Germany 
}
\email{schnaubelt@kit.edu}

\subjclass{Primary 35Q61, 35B65; Secondary 47D06, 35B40, 78A25}

\keywords{Maxwell's equations, interface damping, transmission conditions, surface current, well-posedness, contraction semigroup, hidden regularity, strong stability}

\date{\today}

\begin{abstract}
We study linear Maxwell's equations on a cuboid that is split into two parts by a planar interface, across which the tangential magnetic field jumps proportionally to the tangential electric field.
Conditions of this type arise in the modeling of atomically thin metamaterials, where the sheet is replaced by an effective surface current.
Working in a state space that admits a surface charge on the interface, we show that the problem is governed by a contraction semigroup.
To this end, we provide density results, Weber-type inequalities, and trace estimates for the underlying function spaces.
We further prove a hidden regularity result, stating that the fields in the domain of the Maxwell operator are piecewise Sobolev regular of first order.
Since the damping acts on the interface only, the energy does not decay for all initial fields.
We decompose the dynamics into a unitary part, carried by the eigenmodes that are invisible to the damping, and a strongly stable part.
In the non-refracting case with a non-resonant interface position, we characterize the decaying initial fields by two moment conditions.
\end{abstract}

\maketitle

\section{Introduction}
The Maxwell system is the foundation of electromagnetic theory. When describing composed materials, it becomes an interface
problem, which poses deep analytical challenges. In this paper we consider linear and isotropic Maxwell's equations on a
cuboidal domain with an impedance transmission condition, and we investigate regularity and stability of solutions.
\begin{figure}
    \centering
    \scalebox{1.25}{
        \begin{tikzpicture}[
        scale=1.30,   %
        >={Stealth[length=1.8mm]},
        bnd/.style={very thick, black},
        hid/.style={thick, black, densely dashed},
        axis/.style={thin, black},
    ]

    \coordinate (b1) at ( 2,0,0);
    \coordinate (b2) at (-2,0,0);
    \coordinate (b3) at (-2,0,2);
    \coordinate (b4) at ( 2,0,2);
    \coordinate (t1) at ($(b1) + (0,2,0)$);
    \coordinate (t2) at ($(b2) + (0,2,0)$);
    \coordinate (t3) at ($(b3) + (0,2,0)$);
    \coordinate (t4) at ($(b4) + (0,2,0)$);

    \coordinate (shift) at (-0.35,0,0);
    \coordinate (i1) at ($(b2) + (1.65,0,0) + (shift)$);
    \coordinate (i2) at ($(b3) + (1.65,0,0) + (shift)$);
    \coordinate (i3) at ($(i2) + (0,2,0)$);
    \coordinate (i4) at ($(i1) + (0,2,0)$);

    \fill[gray!12] (i1) -- (i2) -- (i3) -- (i4) -- cycle;
    \draw[pattern=north east lines, pattern color=gray!55]
        (i1) -- (i2) -- (i3) -- (i4) -- cycle;

    \draw[hid] (b1) -- (b2) -- (b3);
    \draw[hid] (b2) -- (t2);
    \draw[hid] (i4) -- (i1) -- (i2);
    \draw[bnd] (i2) -- (i3) -- (i4);
    \draw[bnd] (b3) -- (b4) -- (b1);
    \draw[bnd] (t1) -- (t2) -- (t3) -- (t4) -- cycle;
    \draw[bnd] (b1) -- (t1);
    \draw[bnd] (b3) -- (t3);
    \draw[bnd] (b4) -- (t4);

    \coordinate (im) at ($0.5*(i2) + 0.5*(i4)$);
    \fill (im) circle (1.1pt);
    \draw[bnd,->] (im) -- ($(im) + (0.9,0,0)$)
        node[anchor=south west, inner sep=1pt] {$n$};

    \coordinate (o) at (0,0,2);
    \fill (o) circle (1.1pt);
    \draw[axis,->,very thick] (o) -- ($(o) + (0.6,0,0)$) node[anchor=north west, inner sep=1pt] {$x_1$};
    \draw[axis,->,very thick] (o) -- ($(o) + (0,0,-0.6)$) node[anchor=west, inner sep=1.5pt] {$x_2$};
    \draw[axis,->,very thick] (o) -- ($(o) + (0,0.6,0)$) node[anchor=west, inner sep=2pt] {$x_3$};

    \node[anchor=north, inner sep=3pt] at (i2) {$\zeta$};
    \node[anchor=north west, inner sep=3pt] at (t3) {$Q_1$};
    \node[anchor=south east, inner sep=3pt] at (b4) {$Q_2$};
    \node[fill=white, inner sep=1.5pt] at ($(im) + (0,-0.55,0)$) {$F$};

\end{tikzpicture}
    }
    \caption{Domain \(Q\) with sub-cubes \(Q_1\), \(Q_2\), and interface \(F\).}
    \label{fig:domain}
\end{figure}

Let \(Q \subset \R^3\) denote our domain of interest, which is partitioned by two sub-cubes \(Q_1\) and \(Q_2\) defined by
\begin{equation*}\label{eq:domain}
    Q = (-1,1)\times (0,1)^2,
    \quad
    Q_1 = (-1,\zeta)\times (0,1)^2,
    \quad
    Q_2 = (\zeta,1) \times (0,1)^2,
\end{equation*}
where \(\zeta \in (-1,1)\) is the point where the interface \(F = \monosetc{\zeta} \times (0,1)^2\) intersects the \(x_1\)-axis perpendicularly.
The unit normal vector on \(F\) is denoted by \(\vvn = (1,0,0)\).
If convenient, we identify \(F\) with the open unit square in two dimensions and use the same symbol.
See Figure~\ref{fig:domain} for the geometric setup.
For \(i\in \monosetc{1,2}\), we consider Maxwell's equations in the form
\begin{subequations}\label{eq:PDE}
\begin{equation}\label{eq:CurlEquations}
    \epsilon^{(i)}\partial_t \vvE^{(i)}(t) = \curl \vvH^{(i)}(t),
    \quad
    \mu^{(i)}\partial_t\vvH^{(i)}(t) = - \curl \vvE^{(i)}(t),
    \qquad \text{ in } (0,\infty)\times Q_i.
\end{equation}
Here, the upper index indicates the restriction \(f^{(i)} = \restrict{f}{Q_i}\) for any function \(f\,:\, Q\to \R\).
The scalar permittivity \(\epsilon\) and permeability \(\mu\) are piecewise constant and positive, that is,
\begin{equation}
    \epsilon^{(i)}, \mu^{(i)} > 0
    \qquad \text{ for } i \in\monosetc{1,2}.
\end{equation}
The equations are supplemented with the following transmission condition at the interface \(F\)
\begin{equation}\label{eq:TangentialJumps}
    \jmp{\vvE_\tau(t)} = 0,
    \quad
    \jmp{\vvH_\tau(t)} = -\sigma \vvE_\parallel(t)
    \qquad \text{on } (0,\infty)\times F
\end{equation}
with positive conductivity \(\sigma > 0\).
\subeqpause
\end{subequations}
Here, we denote by double brackets the jump \(\jmp{f} = \restrict{f^{(2)}}{F} - \restrict{f^{(1)}}{F}\) for any function \(f\,:\, Q\to \R\) that admits a well-defined trace on \(F\).
Furthermore, for any vector field \(\vvV\,:\, Q\to \R^3\), we use the shorthand notation for the tangential component and the rotated tangential component
\begin{equation}\label{eq:NotationalSugarTang}
    \restrict{\vvV^{(i)}_\parallel}{F} 
        = \restrict{\vvn\times \bigl(\vvV^{(i)}\times\vvn\bigr)}{F},
    \quad
    \restrict{\vvV^{(i)}_\tau}{F} = \restrict{\vvV^{(i)}\times \vvn}{F},
    \qquad
    \text{on } F,\quad
    \text{for } i\in\monosetc{1,2}.
\end{equation}
Note that the second condition in \eqref{eq:TangentialJumps} is unambiguous since the tangential component of the electric field is continuous by the first condition in \eqref{eq:TangentialJumps}. We therefore drop the superscript here and do not specify from which side the trace is taken.
Finally, the equations are supplemented with electric boundary conditions
\begin{resumesubeqs}
\begin{equation}\label{eq:ElectricBoundaryConditions}
    \vvE(t) \times \vvnu = 0,
    \qquad \text{in } (0,\infty) \times \partial Q,
\end{equation}
\end{resumesubeqs}
where \(\vvnu\) denotes the outer unit normal, 
and the initial conditions \(\vvE(0) = \vvE_0\) and \(\vvH(0) = \vvH_0\).

\subsection*{Physical relevance}
The model introduced in \eqref{eq:PDE} is used to study the optical properties of atomically thin metamaterials. 
Metamaterials are artificially engineered materials with properties beyond those found in naturally occurring media, and they have attracted broad interest because of their potential applications. In particular, atomically thin sheets of such materials can be designed to manipulate electromagnetic waves in ways that enable novel optical devices, see \cite{Cho.L.S.Et.2010,Ted.L.O.2016}. To study and model these systems, one often uses Maxwell's equations supplemented with effective dissipation models that account for the negligible thickness of the material compared to the relevant spatial scales.
These systems are modelled using the usual transmission conditions, see, e.g., \cite{Jac.1975.CE}, at a dielectric interface \(F\), which are given by
\begin{equation*}
    \jmp{\vvH_\tau(t)} = \vvJ_F(\vvE_\parallel)(t),
    \quad
    \jmp{\vvE_\tau(t)} = 0,
    \qquad \text{on } (0,\infty)\times F.
\end{equation*}
The surface current \(\vvJ_F\) describes the effective dissipation of the metamaterial.
Many dissipation models for \(\vvJ_F\) are considered in the literature, such as the Drude or Lorentz model, see, for example, \cite{Wer.W.M.Et.2015,Dep.2016.GOES}.
In this work, we consider the linear relation with constant conductivity, that is \(\vvJ_F(\vvE_\parallel)(t) = -\sigma \vvE_\parallel(t)\), which can be interpreted as an instantaneous Drude model and serves as a natural starting point to consider more complex models in the future.

\subsection*{Literature review}
Maxwell's equations at a material interface have attracted considerable interest in recent years.
A comprehensive local wellposedness theory was developed in \cite{Sch.Spi.2022}
for quasi-linear instantaneous material laws and a prescribed surface current at
a smooth interface, whereas \cite{Dor.Z.2023} studied the linear problem with a prescribed
surface current on an interface that touches the boundary. Existence, uniqueness and stability of semi-linear retarded Maxwell
systems with homogeneous transmission conditions were established in \cite{Doh.Ion.Wau.2024} within the framework of the theory of
evolutionary equations. In contrast to these works, the surface current in \eqref{eq:TangentialJumps} is not given in advance,
but is coupled to the tangential electric field, so that the interface itself acts as the only damping mechanism of the system.

In this respect, our transmission condition is the interface analogue of an impedance, or Silver--Müller, boundary condition, which has
been studied thoroughly in the past. For linear Maxwell equations with linear and non-linear variants of such boundary conditions
we mention \cite{Ell.L.N.2002,Ell.Lag.Nic.2002a,Cag.Ell.2011}, and the combination with interior conductivity models is investigated in
\cite{Nic.2018,Nic.Sch.2023}. There are also extensions to retarded boundary laws, cf.\ \cite{Ani.Pok.2019}, as well as to non-linear
Maxwell equations with non-linear boundary damping, see \cite{Sch.Spi.2021, Nut.Sch.2024}.
Since the damping in \eqref{eq:TangentialJumps} acts on the interface only, and hence on a set that does not surround the domain, the decay behaviour differs
substantially from the boundary damped case. The technical basis for our decay proof is the theory of elastic systems stabilized by
collocated feedback developed in \cite{Amm.Nic.2015}.

The regularity analysis of Maxwell equations has been intensively studied as well, see \cite{Cos.Dau.Nic.1999,Cos.Dau.2000,Loh.Nic.2002,Mel.Wor.2025}.
These results are not directly applicable in our situation, since the fields dictated by \eqref{eq:TangentialJumps} are divergence-free
and curl-regular in each sub-cube only, and the interface touches the outer boundary equipped with the boundary conditions
of a perfect conductor. To deal with these difficulties, we also use techniques developed for the analysis of
alternating direction splitting
 methods, see \cite{Zer.Jah.2023,Zer.2022b,Zer.2021}, which are well suited to our specific geometry and are cited below frequently.

\subsection*{Contributions, challenges and methods}
We develop a toolbox for the function spaces dictated by the transmission condition, whose elements are divergence-free
or possess curl-regularity in each  sub-cube only. Hence, a surface charge may build up at the interface. Moreover, a priori
the traces on $F$ just exist in suitable variants of $H^{-1/2}$ and their traces on $\partial F$ can be captured only in
an indirect way. Still we manage to work in a setting, where the fields possess square-integrable tangential traces on $F$.
Among our results are density properties, Weber-type inequalities, and trace estimates, by which the surface charge and the
tangential electric field control each other on the interface. Our arguments rely on subtle trace theorems in critical regularity,
which we combine with the theory of interpolation scales. 
We use various Helmholtz decompositions and elaborate elliptic regularity results for interface problems from \cite{Zer.2021}.

On this basis, we prove well-posedness of the problem, in the sense that it is governed by a contraction semigroup, and show that the
surface charge is transported by the induced surface current. We then establish a hidden regularity result, which states that
the fields are piecewise Sobolev regular of first order, uniformly with respect to the graph norm of the Maxwell operator.
To derive this sharp and somewhat unexpected result, we adapt techniques known for impedance boundary conditions \cite{Buf.C.2001a, Buf.C.2001b, Buf.C.S.2002}, where the interface setting also demands a careful analysis of the compatibility conditions at the edges of the interface.

Finally, we prove a stability result. Since the damping acts on the interface only, energy decay cannot be expected for all initial
fields. So we show that every trajectory splits orthogonally into a unitary part, carried by the eigenmodes that are invisible
 to the damping, and a strongly stable part.  Our spectral analysis is based on the compactness
    of the resolvent of the generator which is a consequence of the hidden regularity.
For non-refracting material parameters and a non-resonant interface position, we compute the eigenmodes explicitly and characterize the stable initial fields by two moment conditions along the axis of the cuboid. 
To this aim, we determine an orthonormal basis  of eigenfunctions of a curl-curl-type operator with piecewise constant coefficients, which yield the TE- and TM-modes of our Maxwell problem, cf.\ \cite{Cos.Dau.2019} for the case $\varepsilon=1=\mu$.

\subsection*{Outline}
The paper is organized as follows.
The remainder of this introduction collects the function spaces and the notation used throughout.
In Section~\ref{sec:WellPosedness}, we introduce the state and energy spaces, discuss the interface conditions, and prove well-posedness of \eqref{eq:PDE} as well as the surface continuity equation.
Section~\ref{sec:FunctionSpacesStudy} is devoted to the properties of the underlying function spaces, that is, Hodge projections, trace estimates, Weber inequalities, and density results, which are used throughout the manuscript.
The hidden regularity of the domain of the Maxwell operator is established in Section~\ref{sec:Regularity}.
Building on it, Section~\ref{sec:Stability} contains the spectral analysis of the Maxwell operator and the decomposition of the semigroup into a unitary and a strongly stable part.
Section~\ref{sec:CavityEigenmodes} characterizes the cavity eigenmodes in the non-refracting case and identifies the unobserved modes, completing the proof of Theorem~\ref{thm:FirstOrderStability}.

\subsection*{Basic function spaces \& Notation}

To close the introduction, we recall some essential Hilbert spaces.
Let \(D\subset \R^n\) be a Lipschitz domain.
For any \(k\in \N\) we denote with \(H^k(D)\) the Sobolev space of functions with weak derivative up to order \(k\) in \(L^2(D)\).
The closure of compactly supported test functions \(C^\infty_c(D)\) in \(H^1(D)\) is denoted with \(H^1_0(D)\), and we define for \(s\geq 0\) the fractional Sobolev spaces as
\begin{equation*}
    H^s(D) = \bigl[L^2(D), H^1(D)\bigr]_{s,2},
\end{equation*}
where \(\bigl[\cdot,\cdot\bigr]_{\theta,2}\) denotes the real interpolation functor.
See, for example, \cite{Kre.P.S.1982.ILO,Tri.1978.ITFS}.
Additionally, we make use of the half-integer spaces
\begin{equation}\label{eq:InterpolationSpaces}
    H^{1/2}_{00}(D) = \bigl[L^2(D), H^1_0(D)\bigr]_{1/2,2},
    \qquad
    H^{3/2}_0(D) = \bigl[L^2(D), H^2(D)\cap H^1_0(D)\bigr]_{3/4,2}
\end{equation}
to account for the impact of boundary conditions at the critical regularity levels \(1/2\) and \(3/2\).
We denote the topological dual of \(H^{1/2}_{00}(F)\) with pivot space \(L^2(F)\) as \(H^{-1/2}_{00}(F)\).
The topological dual of \(H^{1/2}(F)\) with the same pivot space is denoted as \(H^{-1/2}(F)\). 
Propositions 2.1.44, 2.1.47 and 2.1.59 of \cite{Ass.C.L.2018.MFCE} show that $H^{1/2}_{00}(F)$ is the space of functions
$u\in H^{1/2}(F)$ whose 0-extension $\tilde u$ belongs to  $H^{1/2}(\partial Q_i)$ for $i=1$ or $i=2$. Moreover,
$H^{1/2}_{00}(F)$  is densely embedded into  $H^{1/2}(F)=H^{1/2}_0(F)$, but is strictly smaller, so that
$H^{-1/2}(F) \hookrightarrow H^{-1/2}_{00}(F)$. We define the restriction of $\varphi\in H^{-1/2}(\partial Q_i)$ to, say, $F$
by $\restrict{\varphi}{F}(u)=\langle \tilde u, \varphi\rangle_{H^{1/2}(\partial Q_i)}$
for $u \in H^{1/2}_{00}(F)$, see (2.9) in  \cite{Ass.C.L.2018.MFCE}.

We use the following Hilbert spaces to deal with vector fields and refer to \cite{Ass.C.L.2018.MFCE} for a detailed introduction:
\begin{align*}
    \vvH(\curl, Q) 
        &= \setc{\vvV\in L^2(Q)^3}{\curl \vvV\in L^2(Q)^3},\\
    \vvH_0(\curl,Q)
        &= \setc{\vvV\in \vvH(\curl,Q)}{\vvV\times \vvnu = 0 \text{ on } \partial Q},\\
    \vvH(\div_\theta, Q)
        &= \setc{\vvV\in L^2(Q)^3}{\div (\theta \vvV) \in L^2(Q)},\\
    \vvH_0(\div_\theta, Q)
        &= \setc{\vvV\in \vvH(\div_\theta, Q)}{ (\theta\vvV)\cdot \vvnu = 0 \text{ on } \partial Q}.
\end{align*}
These spaces are endowed with the corresponding graph norms.
Throughout, \(\theta\) is a piecewise constant, positive material parameter and the traces are understood in the sense of \(H^{-1/2}\).
We drop the lower index if \(\theta \equiv 1\).

Due to the inhomogeneous transmission conditions, we deal with vector fields that are discontinuous at the interface \(F\).
For that, we introduce the notion of functions that exhibit a piecewise curl and divergence
\begin{align*}
    \vvH(\pcurl,Q) 
        &= \setc{\vvV\in L^2(Q)^3}{\curl \vvV^{(i)} \in L^2(Q_i)^3,\,i\in\monosetc{1,2}},\\
    \vvH(\pdiv_\theta, Q)
        &= \setc{\vvV\in L^2(Q)^3}{\div (\theta^{(i)}\vvV^{(i)} )\in L^2(Q_i),\,i\in\monosetc{1,2}}.
\end{align*}
Note, that the definition of the latter space is independent of \(\theta\), i.e. \(\vvH(\pdiv_\theta,Q) = \vvH(\pdiv,Q)\).
We keep the notation in analogy to \(\vvH(\div,Q)\).
We emphasize the following characterization
\begin{subequations}\label{eq:ConnectionToFullSpaces}
    \begin{align}
        \vvH(\curl, Q) 
            &= \setc{\vvV\in \vvH(\pcurl,Q)}{\jmp{\vvV_\tau} = 0,\text{ in } H^{-1/2}_{00}(F)^2},\label{eq:ConnectionToFullSpaces:Curl}\\
        \vvH(\div_\theta, Q)
            &= \setc{\vvV\in \vvH(\pdiv_\theta,Q)}{\jmp{\theta\vvV_\perp} = 0,\text{ in } H^{-1/2}_{00}(F)}\label{eq:ConnectionToFullSpaces:Div}
    \end{align}
\end{subequations}
and refer to \cite[Prop.~2.2.30-2.2.32]{Ass.C.L.2018.MFCE} for the details.
Here, the tangential jump maps \(\vvH(\pcurl,Q)\) continuously into \(H^{-1/2}_{00}(F)^2\) by equation (2.24) of \cite{Ass.C.L.2018.MFCE}.
However, \(\vvV \in \vvH_0(\curl,Q)\) has rotated tangential trace \(\vvV_\parallel\) in \(H^{-1/2}(F)\) in view of Proposition~2.1.60 in \cite{Ass.C.L.2018.MFCE}, see page 99 therein.

In analogy with \eqref{eq:NotationalSugarTang}, we define the shorthand notation of the normal component of \(\vvV\,:\, Q\to \R^3\) as
\begin{equation}\label{eq:NotationalSugarNorm}
    \restrict{\theta^{(i)}\vvV^{(i)}_\perp}{F}  
        = \restrict{\theta^{(i)}\vvV^{(i)} \cdot \vvn}{F},
    \qquad
    \text{on } F,\quad
    \text{for } i\in\monosetc{1,2}.
\end{equation}
Finally, for \(k\in \N\), we introduce the space of functions that are piecewise Sobolev regular
\begin{equation*}
    PH^k(Q) = \setc{v\in L^2(Q)}{v^{(i)}\in H^k(Q_i)}
\end{equation*}
equipped with the canonical norm.

Throughout the manuscript we denote vector-valued quantities in bold, and write \(\alpha \lesssim \beta\) if there exists a constant \(C>0\), which is independent of \(\alpha\), \(\beta\), and time \(t\), such that \(\alpha \leq C\beta\).

\section{\Wellposedness{}}\label{sec:WellPosedness}

\subsection*{State space}

Along with the curl equations \eqref{eq:CurlEquations} comes Gauss's law, which we summarize in the following lemma.
The first two parts follow from \eqref{eq:CurlEquations} by applying the divergence, and the third part is a consequence of \eqref{eq:CurlEquations} and \cite[Thm.~3.5.1]{Ass.C.L.2018.MFCE}.
\begin{lem}\label{lem:GaussLaw}
    Let \(\bigl(\vvE,\vvH\bigr) \in C^1\bigl([0,\infty), L^2(Q)^6\bigr) \cap C^0\bigl([0,\infty), \vvH_0(\curl,Q)\times \vvH(\pcurl, Q)\bigr)\) solve \eqref{eq:PDE} with initial conditions
    \(\vvE(0)=\vvE_0, \vvH(0)=\vvH_0\).
    \begin{enumerate}[label=\normalfont (\alph*)]
        \item\label{lem:GaussLaw:E} If \(\pdiv\bigl(\varepsilon \vvE_0\bigr) = 0\), 
            then \(\pdiv \bigl(\varepsilon \vvE(t)\bigr) = 0\),  for all \(t> 0\).
        \item\label{lem:GaussLaw:H} If \(\div\bigl(\mu\vvH_0\bigr) = 0\), 
            then \(\div\bigl(\mu\vvH(t)\bigr) = 0\), for all \(t>0\).
        \item\label{lem:GaussLaw:Ntrace} If \(\vvH_0\cdot \vvnu = 0\), 
            then \(\vvH(t)\cdot \vvnu = 0\), for all \(t>0\).
    \end{enumerate}
\end{lem}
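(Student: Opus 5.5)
The plan is to differentiate the divergence constraints in time in the distributional sense and use that the divergence of a curl vanishes. Parts \ref{lem:GaussLaw:E} and \ref{lem:GaussLaw:H} follow this way, while \ref{lem:GaussLaw:Ntrace} needs a trace identity relating normal and tangential traces.

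For \ref{lem:GaussLaw:E} we fix \(i\in\monosetc{1,2}\) and a test function \(\varphi\in C^\infty_c(Q_i)\). We set \(g(t)=\langle \varepsilon^{(i)}\vvE^{(i)}(t),\grad\varphi\rangle_{L^2(Q_i)}\). Since \(\vvE\in C^1([0,\infty),L^2(Q)^3)\), the function \(g\) is \(C^1\), and by \eqref{eq:CurlEquations}
\[
g'(t)=\langle \curl\vvH^{(i)}(t),\grad\varphi\rangle_{L^2(Q_i)}=0,
\]
because \(\div\curl=0\) in \(\mathcal{D}'(Q_i)\); equivalently, integrating by parts against the compactly supported \(\grad\varphi\) gives \(\langle \vvH^{(i)},\curl\grad\varphi\rangle=0\). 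Here it matters that \(\vvH\in\vvH(\pcurl,Q)\), so \(\curl\vvH^{(i)}\) is the genuine distributional curl on \(Q_i\). Thus \(g(t)=g(0)=0\), so \(\div(\varepsilon^{(i)}\vvE^{(i)}(t))=0\) on each \(Q_i\), which is exactly \(\pdiv(\varepsilon\vvE(t))=0\). No information across \(F\) is needed, which matches the fact that a surface charge may appear.

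For \ref{lem:GaussLaw:H} the field \(\vvE(t)\) lies in \(\vvH_0(\curl,Q)\), so \(\curl\vvE(t)\) is a global \(L^2(Q)^3\) field. By the tangential continuity in \eqref{eq:TangentialJumps} and \eqref{eq:ConnectionToFullSpaces:Curl}, it coincides with the piecewise curl, and hence \(\mu\partial_t\vvH=-\curl\vvE\) holds on all of \(Q\). Testing with \(\grad\varphi\) for \(\varphi\in C_c^\infty(Q)\), the same computation as above gives \(\frac{d}{dt}\langle\mu\vvH(t),\grad\varphi\rangle=0\). Therefore \(\div(\mu\vvH(t))=\div(\mu\vvH_0)=0\) in \(\mathcal D'(Q)\). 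In particular there is no jump of \(\mu\vvH_\perp\) across \(F\), by \eqref{eq:ConnectionToFullSpaces:Div}.

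For \ref{lem:GaussLaw:Ntrace} we first use part \ref{lem:GaussLaw:H} (or directly \(\mu\partial_t\vvH=-\curl\vvE\) with \(\curl\vvE\in\vvH(\div,Q)\)) to write
\[
\mu\vvH(t)=\mu\vvH_0-\int_0^t\curl\vvE(s)\,ds .
\]
The integral is taken in \(L^2(Q)^3\) and lies in \(\vvH(\div,Q)\), since its divergence is zero. The normal trace \(\vvV\mapsto\vvV\cdot\vvnu\in H^{-1/2}(\partial Q)\) is continuous on \(\vvH(\div,Q)\), so it commutes with the Bochner integral. The key input, and the step where the cited \cite[Thm.~3.5.1]{Ass.C.L.2018.MFCE} enters, is that for \(\vvW\in\vvH_0(\curl,Q)\) one has \((\curl\vvW)\cdot\vvnu=\operatorname{div}_{\partial Q}(\vvW\times\vvnu)=0\) on \(\partial Q\). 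Equivalently, \(\curl\) maps \(\vvH_0(\curl,Q)\) into \(\vvH_0(\div,Q)\); this follows by testing with \(\grad\psi\) for \(\psi\in H^1(Q)\), using Green's formula and density of \(C_c^\infty\) fields in \(\vvH_0(\curl,Q)\). With this, \(\mu\vvH(t)\cdot\vvnu=\mu\vvH_0\cdot\vvnu=0\).

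The main obstacle is the last part: justifying the boundary identity in \(H^{-1/2}\) and the interchange of trace and time integral. Both are handled by the continuity of the normal trace on \(\vvH(\div,Q)\) together with the cited theorem. A further point is that \(\mu\) is only piecewise constant and \(\partial Q\) meets both \(Q_1\) and \(Q_2\). This is handled by working with \(\mu\vvH\) on each face of \(\partial Q\) separately, where \(\mu\) is a positive constant locally, so the conclusion holds for \(\vvH\cdot\vvnu\) itself.
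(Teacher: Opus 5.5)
Your proposal is correct and follows the paper's (very terse) argument: (a) and (b) come from taking the divergence of the curl equations, and (c) comes from \(\mu\partial_t\vvH=-\curl\vvE\) together with the fact that \(\curl\) maps \(\vvH_0(\curl,Q)\) into fields with vanishing normal trace, which is what the cited \cite[Thm.~3.5.1]{Ass.C.L.2018.MFCE} supplies. Your closing remark on passing from \((\mu\vvH)\cdot\vvnu\) to \(\vvH\cdot\vvnu\) is unnecessary, and it would need more care along the lines where \(F\) meets \(\partial Q\); the boundary condition is naturally read as \((\mu\vvH)\cdot\vvnu=0\), as in \(\vvH_0(\div_\mu,Q)\), and that is exactly what you show.
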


Motivated by Lemma~\ref{lem:GaussLaw}, we set
\begin{align*}
    \vvH_0(\div_\mu = 0, Q) &= \setc{\vvV\in \vvH_0(\div_\mu,Q)}{\div(\mu\vvV) = 0},\\
    \vvH(\pdiv_\varepsilon = 0, Q) &= \setc{\vvV\in \vvH(\pdiv_\varepsilon, Q)}{\pdiv(\varepsilon\vvV) = 0},
\end{align*}
and define the state space
\begin{equation*}
    \statespace = \vvH(\pdiv_\epsilon = 0, Q) \times \vvH_0(\div_\mu = 0, Q).
\end{equation*}
It is a closed subspace of \(L^2(Q)^6\) and, thus, a Hilbert space with the scaled inner product
\begin{equation}
    \Bigl(\bigl(\vvE,\vvH\bigr),\bigl(\vvU,\vvV\bigr)\Bigl)_{\statespace}
        = \int_{Q} \epsilon \vvE\cdot \overline{\vvU} +
            \mu \vvH \cdot \overline{\vvV},
        \quad \bigl(\vvE,\vvH\bigr),\bigl(\vvU,\vvW\bigr)\in \statespace.
\end{equation}

\subsection*{Interface conditions}
In this subsection, we introduce the necessary spaces for studying the inhomogeneous transmission condition \eqref{eq:TangentialJumps}. 
Let the space of square-integrable tangential vector fields on \(F\) be denoted with
\begin{equation*}
    \vvL^2_\tau(F) = \setc{\vvF\in L^2(F)^3}{\vvF\cdot \vvn = 0 \text{ a.e. in } F}.
\end{equation*}
In what follows, we identify this space with \(L^2(F)^2\), which is always possible with an appropriate change of origin, cf. Figure~\ref{fig:domain}.

It is well-known that the tangential trace and tangential component in \eqref{eq:NotationalSugarTang} can be extended to functions in \(\vvH(\curl)\) under minimal assumptions on the regularity of the domain.
See, e.g., \cite{Buf.C.S.2002}.
Nonetheless, they live in different subspaces of \(H^{-1/2}(F)^2\) and, thus, prohibit a clear interpretation of the impedance  transmission condition \eqref{eq:TangentialJumps}.
Circumventing this, one often switches to a regularized setting, where it is a priori assumed that the tangential trace and tangential component are \(L^2(F)^2\)-regular.
See, e.g., \cite{Ned.2001.AEE}.
We follow that practice and introduce the spaces with improved tangential regularity
\begin{subequations}\label{def:ImprovedRegularityCurlSpaces}
\begin{align}
    \vvH^\imp_0(\curl, Q) &= \setc{\vvV\in \vvH_0(\curl, Q)}{\vvV_\parallel \in L^2(F)^2},\\
    \vvH^\imp(\pcurl, Q) &= \setc{\vvV\in \vvH(\pcurl,Q)}{\jmp{\vvV_\tau} \in L^2(F)^2}.
\end{align}
\end{subequations}
They are both equipped with their graph norm.
With those spaces, we can interpret \eqref{eq:TangentialJumps} in the sense of \(L^2(F)^2\).
We refer to \cite[Sec.~3]{Chi.C.2023} for a detailed discussion of this practice.

The energy space is defined as
\begin{equation}\label{eq:EnergySpace}
    \energyspace = \vvV^\imp_{N,\epsilon}(Q) \times \vvV^\imp_{T,\mu}(Q),
    \qquad 
    \bigl\|\bigl(\vvE,\vvH\bigr)\bigr\|_{\energyspace}^2 
        = \bigl\|\vvE\bigr\|_{\vvH^\imp_0(\curl,Q)}^2
            + \bigl\|\vvH\bigr\|_{\vvH^\imp(\pcurl,Q)}^2,
\end{equation}
with the divergence-free spaces
\begin{subequations}\label{eq:VMaxwellSpaces}
\begin{align}
    \vvV^\imp_{N,\theta}(Q) &= \vvH^\imp_0(\curl,Q)\cap \vvH(\pdiv_\theta = 0,Q),\\
    \vvV^\imp_{T,\theta}(Q) &= \vvH^\imp(\pcurl, Q) \cap \vvH_0(\div_\theta = 0,Q).
\end{align}
\end{subequations}
Section \ref{sec:FunctionSpacesStudy} is devoted to the study of basic properties of those spaces, like Weber inequalities and density results.
Theorem~\ref{thm:HDensity} and \ref{thm:EDensity} will imply the density of \(PH^1(Q)^6\cap \energyspace\) in \(\energyspace\) with respect to the energy norm.
This fact is used in the proof of the next integration by parts formula.
It extends the usual formula in \(H^1\) from Theorem~2.2.28 of \cite{Ass.C.L.2018.MFCE}.
\begin{thm}\label{cor:PartialIntegrationFormula}
    For all \(\bigl(\vvE,\vvH\bigr) \in \energyspace\) it holds
    \begin{equation}\label{cor:PartialIntegrationFormula:Formula}
    \bigl(\vvE, \pcurl\vvH\bigr)_{L^2(Q)^3}
        - \bigl(\curl\vvE, \vvH\bigr)_{L^2(Q)^3}
    = \bigl(\vvE_{\parallel}, \jmp{\vvH_\tau} \bigr)_{L^2(F)^2}.
    \end{equation}
\end{thm}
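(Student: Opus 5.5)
The plan is to prove \eqref{cor:PartialIntegrationFormula:Formula} first for smooth (piecewise \(H^1\)) fields and then extend it by density and continuity. As announced before the statement, Theorems~\ref{thm:HDensity} and \ref{thm:EDensity} give density of \(PH^1(Q)^6\cap\energyspace\) in \(\energyspace\) for the energy norm \eqref{eq:EnergySpace}. Every term in \eqref{cor:PartialIntegrationFormula:Formula} is continuous for that norm. The two volume terms are bounded by \(\|\vvE\|_{L^2}\|\pcurl\vvH\|_{L^2}+\|\curl\vvE\|_{L^2}\|\vvH\|_{L^2}\). The interface term is bounded by \(\|\vvE_\parallel\|_{L^2(F)^2}\,\|\jmp{\vvH_\tau}\|_{L^2(F)^2}\), and both factors are part of the graph norms of \(\vvH^\imp_0(\curl,Q)\) and \(\vvH^\imp(\pcurl,Q)\). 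Since the form is bilinear and jointly continuous, it therefore suffices to verify the identity for \((\vvE,\vvH)\in PH^1(Q)^6\cap\energyspace\).

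For such fields we apply, on each sub-cube \(Q_i\) (a Lipschitz domain), the Green formula for \(H^1\) fields from \cite[Thm.~2.2.28]{Ass.C.L.2018.MFCE}:
\[
\int_{Q_i}\vvE^{(i)}\cdot\curl\vvH^{(i)}-\curl\vvE^{(i)}\cdot\vvH^{(i)}
=\int_{\partial Q_i}(\vvH^{(i)}\times\vvnu_i)\cdot\vvE^{(i)},
\]
with real fields (complex conjugates handled identically) and \(\vvnu_i\) the outer normal of \(Q_i\). The traces now lie in \(H^{1/2}(\partial Q_i)\), so all boundary integrals are genuine \(L^2\) integrals. The boundary \(\partial Q_i\) splits into \(\partial Q_i\cap\partial Q\) and \(F\). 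On \(\partial Q\) we use \((\vvH\times\vvnu)\cdot\vvE=-\vvH\cdot(\vvE\times\vvnu)=0\), because \(\vvE\times\vvnu=0\) there.

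On \(F\) the outer normals are \(\vvnu_1=\vvn\) and \(\vvnu_2=-\vvn\). Summing over \(i\) gives the interface contribution
\[
\int_F(\vvH^{(1)}\times\vvn)\cdot\vvE-(\vvH^{(2)}\times\vvn)\cdot\vvE=-\int_F\jmp{\vvH_\tau}\cdot\vvE .
\]
Here \(\vvE\) has no jump in its tangential part, since \(\vvE\in\vvH(\curl,Q)\). The vector \(\jmp{\vvH_\tau}\) is tangential, so \(\jmp{\vvH_\tau}\cdot\vvE=\jmp{\vvH_\tau}\cdot\vvE_\parallel\). This gives the formula up to the overall sign. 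That sign has to be fixed carefully by matching the orientation conventions of \eqref{eq:NotationalSugarTang}, namely \(\vvV_\tau=\vvV\times\vvn\), the jump taken as "2 minus 1", and the order "\(\vvE\cdot\pcurl\vvH-\curl\vvE\cdot\vvH\)". I would recheck it using \(\curl\vvH\cdot\vvE-\curl\vvE\cdot\vvH=\div(\vvH\times\vvE)\), whose flux through \(\partial Q_i\) is \((\vvH\times\vvE)\cdot\vvnu_i\). This bookkeeping is routine.

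The main obstacle is not this computation. It is justifying the reduction, i.e.\ that the density theorems really give approximants converging simultaneously in curl, in \(\vvE_\parallel\) in \(L^2(F)\), and in \(\jmp{\vvH_\tau}\) in \(L^2(F)\), while keeping the boundary condition \(\vvE\times\vvnu=0\). This is precisely what is claimed for Theorems~\ref{thm:HDensity} and \ref{thm:EDensity}, so we invoke them directly. The \(\vvE\)- and \(\vvH\)-components can be approximated independently, because the energy space is a product. Note that the piecewise \(H^1\) fields need not be continuous across \(F\) in the \(\vvH\) component, but the sub-cube Green formula does not require that.
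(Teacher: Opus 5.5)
Your proposal follows the paper's proof. It reduces to $PH^1(Q)^6\cap\energyspace$ using Theorems~\ref{thm:HDensity} and \ref{thm:EDensity} and the continuity of both sides in the energy norm. It then applies the $H^1$ Green formula on each $Q_i$, removes the $\partial Q$ contributions via $\vvE\times\vvnu=0$, and uses that the tangential trace of $\vvE$ is continuous across $F$.

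The one slip is the sign of your sub-cube Green formula. Since $\vvE\cdot\curl\vvH-\curl\vvE\cdot\vvH=\div(\vvH\times\vvE)$ and $(\vvH\times\vvE)\cdot\vvnu_i=-(\vvH\times\vvnu_i)\cdot\vvE$, the boundary term is $-\int_{\partial Q_i}(\vvH^{(i)}\times\vvnu_i)\cdot\vvE^{(i)}$, exactly as in the paper's citation of Thm.~2.2.28. With $\vvnu_1=\vvn$ and $\vvnu_2=-\vvn$, the interface terms then sum to $+\int_F\jmp{\vvH_\tau}\cdot\vvE_\parallel$. This is precisely the stated sign, so the discrepancy came from the Green formula itself and not from the orientation conventions in \eqref{eq:NotationalSugarTang}. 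A quick check is consistency with Corollary~\ref{cor:OperatorIsDissipative}: inserting $\jmp{\vvH_\tau}=-\sigma\vvE_\parallel$ must give $-\sigma\|\vvE_\parallel\|^2_{L^2(F)^2}$.
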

\begin{proof}
    Let \(\bigl(\vvE,\vvH\bigr) \in PH^1(Q)^6\cap\energyspace\).
    For \(i\in \monosetc{1,2}\), Theorem~2.2.28 in \cite[Sec.~2.2.2]{Ass.C.L.2018.MFCE} shows
    \begin{equation*}
        \bigl(\vvE^{(i)},\curl \vvH^{(i)}\bigr)_{L^2(Q_i)^3}
            -   \bigl(\curl \vvE^{(i)}, \vvH^{(i)}\bigr)_{L^2(Q_i)^3}
        = - \int_{\partial Q_i} \restrict{\vvE^{(i)}}{\partial Q_i}\cdot \restrict{\bigl(\vvH^{(i)}\times \vvnu^{(i)}\bigr)}{\partial Q_i} \,\mathrm{d}\sigma
    \end{equation*}
    as \(E\in\vvH_0(\curl,Q)\).
    Moreover, the tangential trace of \(\vvE\) is continuous on \(F\) by \eqref{eq:ConnectionToFullSpaces:Curl}.
    Hence, \eqref{cor:PartialIntegrationFormula:Formula} follows for such fields.
    The general case then follows by approximation.
    See Theorem~\ref{thm:HDensity} and \ref{thm:EDensity} below for the needed density.
\end{proof}

\subsection*{\Wellposedness{}}
We are in the position to write \eqref{eq:PDE} as a linear evolution equation.
The Maxwell operator is defined as
\begin{subequations}\label{eq:MaxwellOperator}
\begin{equation}\label{eq:MaxwellOperatorDefiniton}
    M\,:\, \statespace \supset D(M) \to \statespace,
    \quad
    \bigl(\vvE,\vvH\bigr) \mapsto \bigl(\epsilon^{-1}\pcurl\vvH,-\mu^{-1}\curl \vvE\bigr),
\end{equation}
where the inhomogeneous transmission condition \eqref{eq:TangentialJumps} is included in the domain
\begin{equation}\label{eq:MaxwellOperatorDomain}
    D(M) = \setc[\big]{\bigl(\vvE,\vvH\bigr) \in \energyspace}{\jmp{\vvH_\tau} = -\sigma \vvE_\parallel}.
\end{equation}
Note that \(M\) is well-defined since \(\range(\pcurl) \subset \kernel(\pdiv)\), and \(\curl\bigl(\vvH_0(\curl,Q)\bigr) \in \vvH_0(\div,Q)\).
\end{subequations}
Thus, problem \eqref{eq:PDE} for \(\vvu(t) = \bigl(\vvE(t),\vvH(t)\bigr)\) reads as
\begin{equation}\label{eq:EvolutionEquation}
    \mathrm{d}_t \vvu(t) = M\vvu(t), \quad t>0,
    \qquad
    \vvu(0) = \vvu_0 \in \statespace.
\end{equation}
We prove unique existence of solutions to this problem with the Lumer-Phillips theorem.
See, for example, \cite[Cor.~II.3.20]{Eng.N.2011.SCOS}.
As a direct consequence of Theorem~\ref{cor:PartialIntegrationFormula}, we obtain that the Maxwell operator is dissipative.
\begin{cor}\label{cor:OperatorIsDissipative}
    The Maxwell operator \eqref{eq:MaxwellOperator} is dissipative, i.e., it satisfies
    \begin{equation*}
        \Bigl(M\bigl(\vvE,\vvH\bigr),\bigl(\vvE,\vvH\bigr)\Bigr)_{\statespace} = -\sigma\bigl\|\vvE_\parallel\bigr\|_{L^2(F)^2}^2 \leq 0,
        \qquad \text{for } \bigl(\vvE,\vvH\bigr) \in D(M).
    \end{equation*}
\end{cor}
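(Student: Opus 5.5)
We compute the inner product directly from the definitions and then reduce it to Theorem~\ref{cor:PartialIntegrationFormula}. Let \(\bigl(\vvE,\vvH\bigr)\in D(M)\). By definition of the scaled inner product on \(\statespace\), the weights \(\epsilon\) and \(\mu\) cancel against \(\epsilon^{-1}\) and \(\mu^{-1}\) in \eqref{eq:MaxwellOperatorDefiniton}, so that
\begin{equation*}
    \Bigl(M\bigl(\vvE,\vvH\bigr),\bigl(\vvE,\vvH\bigr)\Bigr)_{\statespace}
    = \bigl(\pcurl\vvH,\vvE\bigr)_{L^2(Q)^3} - \bigl(\curl\vvE,\vvH\bigr)_{L^2(Q)^3}.
\end{equation*}
For complex-valued fields we work with the real part, since dissipativity concerns \(\operatorname{Re}(M\vvu,\vvu)\le 0\). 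In the real setting the first term equals \(\bigl(\vvE,\pcurl\vvH\bigr)_{L^2(Q)^3}\). In general it is the complex conjugate of that quantity, which leaves the real part unchanged.

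Since \(D(M)\subset\energyspace\), Theorem~\ref{cor:PartialIntegrationFormula} applies. It identifies the right-hand side, or its real part, with \(\bigl(\vvE_\parallel,\jmp{\vvH_\tau}\bigr)_{L^2(F)^2}\). The transmission condition built into \eqref{eq:MaxwellOperatorDomain} gives \(\jmp{\vvH_\tau}=-\sigma\vvE_\parallel\) in \(L^2(F)^2\). Hence the boundary term equals \(-\sigma\|\vvE_\parallel\|_{L^2(F)^2}^2\), and this is \(\le 0\) because \(\sigma>0\).

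There is no real obstacle here, because the analytic work lies entirely in Theorem~\ref{cor:PartialIntegrationFormula} and in the density results it relies on. Two points only need care. The first is the bookkeeping of signs and of complex conjugation. Strictly speaking, in the complex case the identity should be stated for the real part, or the imaginary parts should be shown to cancel; one checks that the difference of the two curl terms is purely imaginary apart from the interface term. The second is that all traces are genuinely in \(L^2(F)^2\), which the definition of \(\energyspace\) guarantees.
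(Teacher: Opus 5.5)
Your argument is correct and follows the paper's route. The weights cancel, Theorem~\ref{cor:PartialIntegrationFormula} turns the two curl terms into $\bigl(\vvE_\parallel,\jmp{\vvH_\tau}\bigr)_{L^2(F)^2}$, and the transmission condition in $D(M)$ yields $-\sigma\|\vvE_\parallel\|_{L^2(F)^2}^2$. For complex fields only your real-part version holds, not the alternative that the imaginary parts cancel: the leftover term $-2i\operatorname{Im}\bigl(\curl\vvE,\vvH\bigr)_{L^2(Q)^3}$ does not vanish in general (for $M\vvu=i\beta\vvu$ one has $(M\vvu,\vvu)_{\statespace}=i\beta\|\vvu\|_{\statespace}^2$), which matches how the paper later takes real parts in this corollary.
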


\noindent Therefore, the rest of this section is devoted to proving that the Maxwell operator \eqref{eq:MaxwellOperator} is maximal.
We solve a suitable variational problem to obtain a magnetic field, given a datum in \(\statespace\).
By subsequently testing the variational problem with suitable test functions, we show that the solution satisfies the conditions defining the domain of the Maxwell operator \eqref{eq:MaxwellOperatorDomain}.
\begin{lem}
    The operator \((I-M)\,:\, D(M) \to \statespace\) is surjective.
\end{lem}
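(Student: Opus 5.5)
Given $(\vvF,\vvG)\in\statespace$, we seek $(\vvE,\vvH)\in D(M)$ with $\vvE-\epsilon^{-1}\pcurl\vvH=\vvF$ and $\vvH+\mu^{-1}\curl\vvE=\vvG$. Formally eliminating $\vvH=\vvG-\mu^{-1}\curl\vvE$ and inserting the transmission condition $\jmp{\vvH_\tau}=-\sigma\vvE_\parallel$ into the integration by parts formula of Theorem~\ref{cor:PartialIntegrationFormula} suggests a variational problem for the electric field. We pose it in
\[
X=\vvH^\imp_0(\curl,Q),\qquad \|\vvV\|_X^2=\|\vvV\|_{\vvH_0(\curl,Q)}^2+\|\vvV_\parallel\|_{L^2(F)^2}^2,
\]
which is complete because the tangential trace is continuous into a distribution space on $F$. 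The problem reads
\[
a(\vvE,\vvV):=\int_Q \epsilon\vvE\cdot\overline{\vvV}+\mu^{-1}\curl\vvE\cdot\overline{\curl\vvV}\,\mathrm{d}x+\sigma\int_F\vvE_\parallel\cdot\overline{\vvV_\parallel}\,\mathrm{d}s
=\int_Q\epsilon\vvF\cdot\overline{\vvV}+\vvG\cdot\overline{\curl\vvV}\,\mathrm{d}x
\]
for all $\vvV\in X$. Since $\sigma>0$, the form $a$ is bounded and coercive on $X$, so Lax--Milgram gives a unique $\vvE\in X$. We then set $\vvH:=\vvG-\mu^{-1}\curl\vvE\in L^2(Q)^3$. (One could equally eliminate $\vvE$ and solve for $\vvH$ in $\vvH^\imp(\pcurl,Q)$, but the jump condition is less natural there, so we work with $\vvE$.)

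\emph{Membership of $\vvH$.} We test with $\vvV\in C_c^\infty(Q_i)^3$. The boundary term vanishes, and we obtain $\epsilon\vvE-\curl\vvH=\epsilon\vvF$ in $\mathcal{D}'(Q_i)$. Hence $\pcurl\vvH=\epsilon(\vvE-\vvF)\in L^2$, and $\vvH\in\vvH(\pcurl,Q)$.

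\emph{Divergence and boundary conditions for $\vvH$.} Since $\curl\vvE$ for $\vvE\in\vvH_0(\curl,Q)$ lies in $\vvH_0(\div,Q)$ with vanishing divergence, and $\mu\vvG\in\vvH_0(\div=0)$, we get $\mu\vvH=\mu\vvG-\curl\vvE\in\vvH_0(\div=0,Q)$.

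\emph{Divergence condition for $\vvE$.} We test with $\vvV=\nabla\varphi$ for $\varphi\in C_c^\infty(Q_i)$. This is admissible, since its curl and its tangential trace on $F$ vanish. It yields $\div(\epsilon\vvE)=\div(\epsilon\vvF)=0$ in each $Q_i$. Thus $(\vvE,\vvH)\in\statespace$, with $\vvE\in\vvV^\imp_{N,\epsilon}(Q)$.

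\emph{Transmission condition (main obstacle).} We test with general $\vvV\in X$ and use $\epsilon\vvE-\epsilon\vvF=\pcurl\vvH$. This gives
\[
(\pcurl\vvH,\vvV)-(\vvH,\curl\vvV)=-\sigma(\vvE_\parallel,\vvV_\parallel)_{L^2(F)}.
\]
The left side is the generalized Green formula for $\vvH\in\vvH(\pcurl,Q)$ and $\vvV\in\vvH_0(\curl,Q)$. Choosing $\vvV$ with smooth tangential traces, which range over a dense set of $H^{1/2}_{00}(F)^2$, identifies
\[
\jmp{\vvH_\tau}=-\sigma\vvE_\parallel\quad\text{in } H^{-1/2}_{00}(F)^2 .
\]
The right-hand side lies in $L^2(F)^2$, so $\vvH\in\vvH^\imp(\pcurl,Q)$ and $\vvH\in\vvV^\imp_{T,\mu}(Q)$. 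The delicate points are twofold. First, we must justify the distributional Green identity with the duality pairing $\langle\jmp{\vvH_\tau},\vvV_\parallel\rangle$ on $F$; the dual orientation from \cite{Ass.C.L.2018.MFCE} requires care. Second, we must check that such test fields $\vvV$ exist, namely smooth fields vanishing near $\partial Q$ whose traces are dense. We would take $\vvV=\chi(x_1)\,\bigl(\psi(x_2,x_3),0\bigr)$-type fields with $\psi\in C_c^\infty(F)^2$.

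Finally, $(\vvE,\vvH)\in D(M)$ and $(I-M)(\vvE,\vvH)=(\vvF,\vvG)$, which proves surjectivity.
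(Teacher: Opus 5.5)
Your proof is correct, but it takes a different route from the paper's. The paper eliminates $\vvE$ rather than $\vvH$. It applies Lax--Milgram to the form $(\mu\vvH,\vvV)+(\epsilon^{-1}\pcurl\vvH,\pcurl\vvV)+\sigma^{-1}(\jmp{\vvH_\tau},\jmp{\vvV_\tau})_{L^2(F)^2}$ on the constrained space $\vvV^\imp_{T,\mu}(Q)$ and sets $\vvE=\vvF+\epsilon^{-1}\pcurl\vvH$. It then has to recover both the electric boundary condition $\vvE\times\vvnu=0$ and the interface law as natural conditions. Because that test space carries the constraints $\div(\mu\vvV)=0$ and $\mu\vvV\cdot\vvnu=0$, smooth and piecewise smooth fields are not admissible test functions as they stand. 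The paper therefore needs the Hodge projection of Lemma~\ref{lem:HodgeProjectors}~(a), which preserves the piecewise curl and the tangential jump.

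Your test space $\vvH^\imp_0(\curl,Q)$ has no divergence constraint, so $C_c^\infty(Q)^3$ and the piecewise gradients $\grad\varphi$ with $\varphi\in C_c^\infty(Q_i)$ are admissible directly. The constraint $\vvH\in\vvH_0(\div_\mu=0,Q)$ then comes for free from $\vvH=\vvG-\mu^{-1}\curl\vvE$. The condition $\pdiv(\epsilon\vvE)=0$ follows from $\pdiv(\epsilon\vvF)=0$, because piecewise gradients have vanishing tangential trace on $F$. In your formulation the electric boundary condition is essential rather than natural. You read off the interface law with globally smooth test fields of arbitrary tangential trace on $F$, instead of piecewise smooth fields of arbitrary tangential jump. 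Correspondingly, you deduce $\jmp{\vvH_\tau}\in L^2(F)^2$ from $\vvE_\parallel\in L^2(F)^2$, whereas the paper deduces the converse.

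Both arguments rest on the same Green pairing on $F$. Your first ``delicate point'' is exactly the identity the paper uses in \eqref{eq:ProofEquation05} and in the proof of Lemma~\ref{lem:HodgeProjectors}. So your version avoids the Hodge projection altogether and is somewhat more economical; it is the standard formulation for impedance boundary conditions.

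One minor slip: the test fields should be $\vvV=\chi(x_1)\,(0,\psi_1,\psi_2)$ with $\chi\in C_c^\infty(-1,1)$ and $\chi(\zeta)=1$. Then $\vvV\in C_c^\infty(Q)^3$ and $\vvV_\parallel=\psi$ on $F$.
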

\begin{proof}
    Let \(\bigl(\vvF,\vvG\bigr) \in \statespace\).
    We define the bilinear form
    \(a\,:\,  \vvV^\imp_{T,\mu}(Q)  \times  \vvV^\imp_{T,\mu}(Q)  \to \mathbb{R}\) by
    \begin{equation}
        a\bigl(\vvH,\vvV\bigr) = 
        \bigl(\mu\vvH,\vvV\bigr)_{L^2(Q)^3}
        + \bigl(\epsilon^{-1}\pcurl\vvH,\pcurl\vvV\bigr)_{L^2(Q)^3}%
        + \sigma^{-1}\bigl(\jmp{\vvH_{\tau}}, \jmp{\vvV_{\tau}}\bigr)_{L^2(F)^2}.
    \end{equation}
    The bilinear form \(a\) is continuous and coercive.
    Let \(\ell\,:\,  \vvV^\imp_{T,\mu}(Q)  \to \mathbb{R}\) be the continuous linear form defined by
    \begin{equation*}
        \ell(\vvV) = \bigl(\mu\vvG,\vvV\bigr)_{L^2(Q)^3} - \bigl(\vvF,\pcurl \vvV\bigr)_{L^2(Q)^3}.
    \end{equation*}
    The Lax-Milgram lemma thus provides a unique solution of the variational problem
    \begin{equation}\label{eq:VariationalFormulation}
        \text{seek } \vvH\in  \vvV^\imp_{T,\mu}(Q)  \text{ s.t.}\quad a(\vvH,\vvV) = \ell(\vvV),\quad \text{for all } \vvV\in   \vvV^\imp_{T,\mu}(Q).
    \end{equation}

    Given the solution \(\vvH\in  \vvV^\imp_{T,\mu}(Q)\), we define the electric field by
    \begin{equation}\label{eq:ProofDefinition1}
        \vvE = \vvF +\epsilon^{-1}\pcurl \vvH.
    \end{equation}
    Note that \(\vvE\) belongs to  \(\vvH(\pdiv_\epsilon = 0,Q)\).

    We show that \(\vvE\) exhibits a weak variational curl.
    Let \(\vvU \in C^\infty(\overline{Q})^3\) and \(\vvP_{\mu,0}\,:\, L^2(Q)^3\to \vvH_0(\div_\mu = 0,Q)\) be the orthogonal projection with respect to \(\bigl(\mu\cdot,\cdot\bigr)_{L^2(Q)^3}\).
    Lemma~\ref{lem:HodgeProjectors}~\ref{lem:HodgeProjectors:HdivHomo} yields \(\vvP_{\mu} \vvU \in \vvH(\curl,Q)\subset \vvH(\pcurl,Q)\) and \(\jmp{(\vvP_{\mu,0}\vvU)_\tau} = 0\).
    Hence, \(\vvP_{\mu,0}\vvU \in \vvV_{T,\mu}^\imp(Q)\) and thus we can insert it into \eqref{eq:VariationalFormulation} to obtain
    \begin{multline}\label{eq:TestEquation2}
        \bigl(\mu\vvH,\vvP_{\mu,0}\vvU\bigr)_{L^2(Q)^3}
        + \bigl(\epsilon^{-1}\pcurl\vvH,\pcurl \vvP_{\mu,0}\vvU\bigr)_{L^2(Q)^3}\\
        =\bigl(\mu\vvG,\vvP_{\mu,0}\vvU\bigr)_{L^2(Q)^3} - \bigl(\vvF,\pcurl \vvP_{\mu,0}\vvU\bigr)_{L^2(Q)^3}.
    \end{multline}
    With the definition of \(\vvE\), Lemma~\ref{lem:HodgeProjectors}~\ref{lem:HodgeProjectors:HdivHomo}, and \(\vvG,\vvH \in \vvH_0(\div_\mu = 0,Q)\), we conclude that \eqref{eq:TestEquation2} is equivalent to
    \begin{equation*}
        \bigl(\vvE,\curl\vvU\bigr)_{L^2(Q)^3} 
            = \bigl(\mu\vvG - \mu \vvH,\vvP_{\mu,0}\vvU\bigr)_{L^2(Q)^3}
            = \bigl(\mu\vvG - \mu \vvH,\vvU\bigr)_{L^2(Q)^3}.
    \end{equation*}
    Since \(\vvU\) was chosen arbitrarily, \(\vvE\)  belongs to \(\vvH_0(\curl,Q)\), and
    \begin{equation}\label{eq:ProofDefinition2}
        \vvH = \vvG - \mu^{-1}\curl\vvE.
    \end{equation}
    It remains to prove that \(\vvE\) and \(\vvH\) satisfy the transmission condition posed in the domain of the Maxwell operator \eqref{eq:MaxwellOperatorDomain}.
    Therefore, take \(\vvU\in L^2(Q)^3\) with \(\vvU^{(i)}\in C^\infty(\overline{Q_i})^3\), \(i=1,2\).
    By Lemma~\ref{lem:HodgeProjectors}~\ref{lem:HodgeProjectors:HdivHomo} the field \(\vvP_{\mu,0}\vvU\) is contained in \(\vvH(\pcurl,Q)\) and hence \eqref{eq:VariationalFormulation} leads to
    \begin{equation*}
        \bigl(\mu\vvH - \mu\vvG, \vvP_{\mu,0}\vvU\bigr)_{L^2(Q)^3}
            + \bigl(\varepsilon^{-1}\pcurl\vvH + \vvF, \pcurl(\vvP_{\mu,0}\vvU)\bigr)_{L^2(Q)^3}
        = -\sigma^{-1} \bigl(\jmp{\vvH_\tau},\jmp{\vvP_{\mu,0}\vvU_\tau}\bigr)_{L^2(Q)^3}.
    \end{equation*}
    From Lemma~\ref{lem:HodgeProjectors}~\ref{lem:HodgeProjectors:HdivHomo}, \eqref{eq:ProofDefinition1}, and \eqref{eq:ProofDefinition2}, we then deduce
    \begin{equation}\label{eq:PoofEquation04}
        \bigl(\vvE,\pcurl \vvU\bigr)_{L^2(Q)^3}
            - \bigl(\curl\vvE,\vvU\bigr)_{L^2(Q)^3}
        = -\sigma^{-1}\bigl(\jmp{\vvH_\tau},\jmp{\vvU_\tau}\bigr)_{L^2(Q)^2}.
    \end{equation}
    On the other hand, integrating by parts in \(\vvH(\curl,Q_i)\times H^1(Q_i)\), the left-hand side can be written as
    \begin{equation}\label{eq:ProofEquation05}
        \bigl(\vvE,\pcurl \vvU\bigr)_{L^2(Q)^3} - (\curl \vvE,\vvU)_{L^2(Q)^3}
            = \langle\vvE_\parallel, \jmp{\vvU_\tau}\rangle_{1/2},
    \end{equation}
    where the angle-brackets denote the dual pairing in \(H^{1/2}(F)^2\). 
    (See the comments after \eqref{eq:ConnectionToFullSpaces}.)
    Equations \eqref{eq:PoofEquation04} and \eqref{eq:ProofEquation05} now imply \(-\sigma \vvE_\parallel = \jmp{\vvH_\tau}\) in the sense of \(H^{-1/2}_{00}(F)^2\).
    As \(\jmp{\vvH_\tau}\in L^2(F)^2\) by assumption, the trace \(\vvE_\parallel\) also belongs to \(L^2(F)^2\).
    Therefore, we found \(\bigl(\vvE,\vvH\bigr) \in D(M)\) such that \((I-M)\bigl(\vvE,\vvH\bigr) = \bigl(\vvF,\vvG\bigr)\), which proves the claim.
\end{proof}
The Lumer-Phillips theorem, see, e.g., \cite[Cor.~II.3.20]{Eng.N.2011.SCOS}, immediately provides the following result.
\begin{thm}\label{thm:Existence}
    \begin{enumerate}[label=\normalfont (\alph*)]
        \item\label{thm:Existence:Generator}
            The Maxwell operator \eqref{eq:MaxwellOperator} is the generator of a \(C^0\)-semigroup \(\bigl(e^{tM}\bigr)_{t\geq 0}\) of contractions.
        \item\label{thm:Existence:Existence}
            The evolution equation \eqref{eq:EvolutionEquation} has a unique mild solution 
            \begin{equation*}
                \vvu\in C^0\bigl([0,\infty),\statespace\bigr),\qquad \vvu(t) = e^{tM}\vvu_0,\quad t\geq 0.
            \end{equation*}
            For \(\vvu_0\in D(M)\), the solution is strong, i.e., \(\vvu \in C^1\bigl([0,\infty); \statespace\bigr)\cap C^0\bigl([0,\infty); D(M)\bigr)\).
    \end{enumerate}
\end{thm}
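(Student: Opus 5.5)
Part \ref{thm:Existence:Generator} will follow from the Lumer--Phillips theorem in the form \cite[Cor.~II.3.20]{Eng.N.2011.SCOS}: on a Hilbert space, a densely defined, dissipative operator \(M\) with \(I-M\) surjective generates a contraction semigroup. Dissipativity is Corollary~\ref{cor:OperatorIsDissipative}, which rests on the integration by parts formula of Theorem~\ref{cor:PartialIntegrationFormula}. Surjectivity of \(I-M\) is the lemma just proved. On a Hilbert space, dissipativity together with surjectivity of \(I-M\) already gives maximal dissipativity, and a maximally dissipative operator is automatically densely defined. So density of \(D(M)\) need not be shown separately.

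To make the density step explicit, I argue directly. Suppose \(\vvu\in\statespace\) is orthogonal to \(D(M)\). Choose \(\vvv\in D(M)\) with \((I-M)\vvv=\vvu\). Then
\[
0=\bigl(\vvu,\vvv\bigr)_{\statespace}=\|\vvv\|_{\statespace}^2-\bigl(M\vvv,\vvv\bigr)_{\statespace}\ge\|\vvv\|_{\statespace}^2 .
\]
Hence \(\vvv=0\) and so \(\vvu=0\). Taking real parts handles the complex case, since Corollary~\ref{cor:OperatorIsDissipative} gives a real nonpositive value. We also need that \(M\) really maps into \(\statespace\), as noted after \eqref{eq:MaxwellOperator}. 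The field \(\epsilon^{-1}\pcurl\vvH\) is piecewise \(\epsilon\)-divergence free. The field \(\mu^{-1}\curl\vvE\) lies in \(\vvH_0(\div_\mu=0,Q)\) because \(\vvE\in\vvH_0(\curl,Q)\).

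For part \ref{thm:Existence:Existence}, I apply standard semigroup theory to the generator \(M\). The orbit \(\vvu(t)=e^{tM}\vvu_0\) is continuous, and it is the unique mild solution: any mild solution \(\vvw\) satisfies \(\vvw(t)=\vvu_0+M\int_0^t\vvw\), and uniqueness then follows from \cite[Prop.~II.6.4]{Eng.N.2011.SCOS}. For \(\vvu_0\in D(M)\), the orbit lies in \(C^1([0,\infty);\statespace)\cap C^0([0,\infty);D(M))\) with \(\frac{\mathrm d}{\mathrm dt}e^{tM}\vvu_0=Me^{tM}\vvu_0\), by \cite[Lem.~II.1.3]{Eng.N.2011.SCOS}.

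The main analytic difficulty sits in the ingredients already established, namely the density results behind Theorem~\ref{cor:PartialIntegrationFormula} and the surjectivity lemma. Given these, the present theorem is essentially bookkeeping. The one point needing care is that density of the domain comes for free from maximal dissipativity on a Hilbert space.
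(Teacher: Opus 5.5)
Your proposal is correct and takes the same route as the paper. The paper simply invokes the Lumer--Phillips theorem in the form of Engel--Nagel, Cor.~II.3.20, combining the dissipativity of Corollary~\ref{cor:OperatorIsDissipative} with the surjectivity of \(I-M\), and part (b) is the standard semigroup consequence. Your explicit density argument just unpacks what that corollary already guarantees on reflexive spaces; note that for complex fields only \(\operatorname{Re}\bigl(M\vvu,\vvu\bigr)_{\statespace}\) equals \(-\sigma\|\vvE_\parallel\|_{L^2(F)^2}^2\), which is all your argument uses.
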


\subsection*{Surface continuity equation}
For \(\vvE(t) \in \vvH(\pdiv_\epsilon, Q)\), the normal traces \(\epsilon^{(i)}\vvE^{(i)}(t)\cdot\vvnu^{(i)} \in H^{-1/2}(\partial Q_i)\) are well-defined, see \cite[Thm.~2.2.18]{Ass.C.L.2018.MFCE}.
Their restrictions to \(F\) belong to \(H^{-1/2}_{00}(F)\).
We define the surface charge density and surface current as
\[
    \rho_F(t) = \jmp{\epsilon\vvE_\perp(t)} \in H^{-1/2}_{00}(F),
    \qquad
    \vvJ_F(t) = -\sigma\vvE_\parallel(t) \in \vvL^2_\tau(F),
    \qquad t \geq 0.
\]
As in the bulk, there is a hidden conservation equation, relating the surface charge density \(\rho_F\) and the surface current \(\vvJ_{F} = -\sigma \vvE_{\parallel}\).
Compare Section I.4.2.2 in \cite{Dau.Lio.1990a} and Lemma~8.1 in \cite{Sch.Spi.2022}.
\begin{thm}\label{thm:SurfaceChargeContinuity}
    Let \(\vvu_0\in\statespace\) and \(\vvu(t) = e^{tM}\vvu_0\) be the mild solution of \eqref{eq:EvolutionEquation}.
    Then the surface charge density \(\rho_F\) belongs to \(C^0\bigl([0,\infty),H^{-1/2}_{00}(F)\bigr)\) and satisfies
    \begin{equation}\label{eq:SurfaceChargeContinuity}
        \rho_F(t) = \rho_F(0) + \int_0^t \div_F\vvJ_F(s)\,\mathrm{d}s,
        \qquad\text{in } H^{-1/2}_{00}(F),\quad t\geq 0.
    \end{equation}
    
    For \(\vvu_0\in D(M)\), it additionally holds \(\rho_F\in C^1\bigl([0,\infty),H^{-1/2}_{00}(F)\bigr)\) and
    \begin{equation}\label{eq:DifferentialChargeContinuity}
        \partial_t\rho_F(t)
            = \div_F\vvJ_F(t),
        \qquad\text{in } H^{-1/2}_{00}(F),\quad t\geq 0.
    \end{equation}
\end{thm}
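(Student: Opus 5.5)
The plan is to first prove the differential identity \eqref{eq:DifferentialChargeContinuity} for strong solutions $\vvu_0\in D(M)$. The integral version \eqref{eq:SurfaceChargeContinuity} for mild solutions then follows by integrating and passing to the limit by density. The central tool is a weak characterization of $\rho_F$ and of $\div_F\vvJ_F$ by testing against suitable functions on $F$.

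\textbf{Step 1: testing functions.} Fix $\varphi\in H^{1/2}_{00}(F)$ and extend it to some $w\in H^1_0(Q)$. Let $\phi$ denote the restriction $w|_F$; it equals $\varphi$, and $\phi=0$ on $\partial F$ in the trace sense. For $\vvV\in\vvH(\pdiv_\epsilon,Q)$, Green's formula on each $Q_i$ gives
\[
\langle \jmp{\epsilon \vvV_\perp},\varphi\rangle
= -\sum_{i=1}^2\Bigl[(\epsilon^{(i)}\vvV^{(i)},\grad w)_{L^2(Q_i)^3}+(\div(\epsilon^{(i)}\vvV^{(i)}),w)_{L^2(Q_i)}\Bigr].
\]
The sign is fixed by $\vvnu^{(1)}=\vvn$ on $F$ and $\vvnu^{(2)}=-\vvn$. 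In the state space the divergence terms vanish, so $\langle\rho_F,\varphi\rangle=-(\epsilon\vvE,\grad w)_{L^2(Q)^3}$. This gives the estimate $\|\rho_F\|_{H^{-1/2}_{00}}\lesssim\|\vvE\|_{L^2}$ via a bounded extension operator $\varphi\mapsto w$. Together with the continuity of $t\mapsto\vvu(t)$ in $\statespace$, it yields $\rho_F\in C^0([0,\infty),H^{-1/2}_{00}(F))$, and $C^1$ when $\vvu\in C^1([0,\infty),\statespace)$.

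\textbf{Step 2: strong solutions.} For $\vvu_0\in D(M)$, differentiate the identity of Step 1 in time and use $\epsilon\partial_t\vvE=\pcurl\vvH$:
\[
\langle\partial_t\rho_F,\varphi\rangle=-(\pcurl\vvH,\grad w)_{L^2(Q)^3}.
\]
Next apply Theorem~\ref{cor:PartialIntegrationFormula} to the pair $(\grad w,\vvH)$. Here $\grad w\in\vvH_0(\curl,Q)$ because $w\in H^1_0(Q)$, and $\curl\grad w=0$. This gives $(\grad w,\pcurl\vvH)=((\grad w)_\parallel,\jmp{\vvH_\tau})_{L^2(F)^2}$. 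Inserting the transmission condition $\jmp{\vvH_\tau}=\vvJ_F=-\sigma\vvE_\parallel$ and $(\grad w)_\parallel=\grad_F\varphi$ yields
\[
\langle\partial_t\rho_F,\varphi\rangle=-(\grad_F\varphi,\vvJ_F)=\langle\div_F\vvJ_F,\varphi\rangle,
\]
which is \eqref{eq:DifferentialChargeContinuity}, with $\div_F$ understood distributionally in $H^{-1/2}_{00}(F)$.

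\textbf{Main obstacle.} Theorem~\ref{cor:PartialIntegrationFormula} is stated only on $\energyspace$, and $\grad w$ need not be divergence-free and need not have an $L^2$ trace on $F$. I would first take $w$ smooth, say $w\in C_c^\infty$-type extensions of $\varphi\in C_c^\infty(F)$, for which $(\grad w)_\parallel\in L^2(F)^2$ holds. The integration-by-parts argument does not really require the divergence constraint: it only uses density of $PH^1$ fields in $\vvV^\imp_{T,\mu}$ for $\vvH$ (Theorem~\ref{thm:HDensity}) and the $H^1$ formula on each $Q_i$ for the smooth factor. With this, the identity holds for $\varphi\in C_c^\infty(F)$. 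Since $\div_F\vvJ_F\in H^{-1}(F)\subset$ distributions, this determines $\partial_t\rho_F$ as an element of $H^{-1/2}_{00}(F)$. Equality in $H^{-1/2}_{00}(F)$ then follows from the density of $C_c^\infty(F)$ in $H^{1/2}_{00}(F)$, because by Step 1 the left side is continuous in $\varphi$.

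\textbf{Step 3: mild solutions.} For strong solutions, integrate \eqref{eq:DifferentialChargeContinuity} to get \eqref{eq:SurfaceChargeContinuity} for $\vvu_0\in D(M)$. For general $\vvu_0\in\statespace$, take $\vvu_{0,n}\in D(M)$ converging to $\vvu_0$. By contractivity, $\rho_F^n\to\rho_F$ uniformly on $[0,t]$ by Step 1. For the integral term, use the dissipation identity from Corollary~\ref{cor:OperatorIsDissipative}:
\[
\sigma\int_0^t\|\vvE_\parallel\|^2_{L^2(F)}\,\mathrm{d}s=\tfrac12\|\vvu_0\|^2-\tfrac12\|\vvu(t)\|^2.
\]
Applied to differences, it shows that $\vvJ_F^n$ is Cauchy in $L^2(0,t;L^2(F)^2)$. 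Hence $\int_0^t\div_F\vvJ_F^n$ converges in $H^{-1}(F)$, and therefore distributionally. The limit identity then holds in $H^{-1/2}_{00}(F)$, since the other terms lie there. This also defines $\vvJ_F$ for mild solutions as an $L^2_{\mathrm{loc}}$ limit.
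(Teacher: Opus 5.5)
Your proposal is correct and follows the paper's structure: first the strong-solution identity $\partial_t\rho_F=\jmp{(\pcurl\vvH)_\perp}=\div_F\jmp{\vvH_\tau}$, then approximation. The only difference is that the paper cites Buffa--Ciarlet for $\jmp{(\pcurl\vvH)_\perp}=\div_F\jmp{\vvH_\tau}$, whereas you derive it directly by testing with $\grad w$ for $w\in H^1_0(Q)$ and using the curl integration-by-parts formula with $\curl\grad w=0$. Your Step~3 also makes the paper's one-line ``approximation argument'' precise: the dissipation identity gives $\vvJ_F$ a meaning for mild solutions as an $L^2_{\mathrm{loc}}(0,\infty;L^2(F)^2)$ limit, and yields convergence of the integral term.
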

\begin{proof}
    Let first \(\vvu_0\in D(M)\).
    Along the trajectory, \(\epsilon^{(i)}\vvE^{(i)}(t)\) and its derivative \(\curl\vvH^{(i)}(t)\) are divergence-free, by the definition of \(\statespace\) and \(\range(\pcurl)\subset\kernel(\pdiv)\), so that 
    \[
    \epsilon^{(i)}\vvE^{(i)}\in C^1\bigl([0,\infty);\vvH(\div,Q_i)\bigr),\qquad i\in\monosetc{1,2}.
    \]
    By continuity of the normal trace and restriction to \(F\), we see that \(\rho_F\) belongs to \(C^1\bigl([0,\infty);H^{-1/2}_{00}(F)\bigr)\) with \(\partial_t\rho_F(t) = \jmp{(\pcurl\vvH(t))_\perp}\).
    As \(\vvH\in \vvH(\pcurl,Q)\), Theorem~3.15 and equation (34) of \cite{Buf.C.2001a} yield
    \begin{equation*}
        \jmp{(\pcurl\vvH(t))_{\perp}}
            = \div_F \jmp{\vvH_\tau(t)}
            = \div_F (-\sigma \vvE_\parallel(t))
            = \div_F \vvJ_F(t) \in H^{-1/2}_{00}(F),
    \end{equation*}
    where we used the transmission condition \eqref{eq:MaxwellOperatorDomain}.
    This proves the second claim.
    For \(\vvu_0\in\statespace\), the first claim follows by an approximation argument.
\end{proof}

\section{Function spaces and their properties}\label{sec:FunctionSpacesStudy}

\subsection*{Hodge projections}\label{sec:FunctionSpacesStudy:Hodge}
We adapt the Hodge projections to the present piecewise setting and follow the ideas in \cite{Nic.2000}.
The necessary results are collected in the following lemma.
\begin{lem}\label{lem:HodgeProjectors}
    Let \(\theta\) denote a piecewise constant and positive material parameter and equip \(L^2(Q)^3\) in the following with the inner product \((\theta\cdot, \cdot)_{L^2(Q)^3}\).
    \begin{enumerate}[label=\normalfont (\alph*)]
        \item\label{lem:HodgeProjectors:HdivHomo}
            Let \(\vvP_{\theta,0}\,:\, L^2(Q)^3\to \vvH_0(\div_\theta = 0,Q)\) denote the orthogonal projection.
            If \(\vvV\in L^2(Q)^3\) with \(\vvV^{(i)}\in C^\infty(\overline{Q_i})^3\) for \(i\in \{1,2\}\), then \(\vvP_{\theta,0} \vvV \in \vvH(\pcurl, Q)\) and
            \begin{equation*}
                \pcurl(\vvP_{\theta,0}\vvV) = \pcurl\vvV\,\, \text{in } Q, 
                \quad
                \jmp{(\vvP_{\theta,0}\vvV)_\tau} = \jmp{\vvV_\tau} \,\, \text{on } F. 
            \end{equation*}
        \item\label{lem:HodgeProjectors:PHdiv}
            Let \(\widetilde{\vvP}_{\theta}\,:\, L^2(Q)^3 \to \vvH(\pdiv_\theta = 0,Q)\) denote the orthogonal projection.
            If \(\vvV\in L^2(Q)^3\) with \(\vvV^{(i)}\in C^\infty(\overline{Q_i})^3\) for \(i\in \{1,2\}\), then \(\widetilde{\vvP}_{\theta} \vvV \in \vvH(\pcurl, Q)\) and
            \begin{equation*}
                \pcurl(\widetilde{\vvP}_{\theta}\vvV) = \pcurl \vvV\,\, \text{in } Q,
                \quad
                (\widetilde{\vvP}_{\theta}\vvV^{(i)}) \times \vvnu^{(i)} = \vvV^{(i)} \times \vvnu^{(i)}\,\, \text{on } \partial Q_i,\,\, i \in \monosetc{1,2}.
            \end{equation*}
    \end{enumerate}
\end{lem}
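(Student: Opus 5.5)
The plan is to describe both projections as "subtract a weighted gradient". The orthogonal complement of the projection range with respect to $(\theta\cdot,\cdot)_{L^2(Q)^3}$ is a space of gradients, and gradients have vanishing curl on each sub-cube. Their tangential traces are controlled by the boundary conditions of the scalar potential.

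\emph{Part (a).} First I identify the complement of $\vvH_0(\div_\theta=0,Q)$. Using $C^\infty_c$-type test arguments and the standard Helmholtz decomposition, I claim
\begin{equation*}
    L^2(Q)^3 = \vvH_0(\div_\theta = 0,Q) \oplus \grad H^1_0(Q),
\end{equation*}
orthogonal in $(\theta\cdot,\cdot)$. The gradient part is orthogonal, since $(\theta\vvW,\grad\varphi) = -(\div(\theta\vvW),\varphi) = 0$ for $\varphi\in H^1_0(Q)$ and divergence-free $\theta\vvW$ with zero normal trace. Conversely, a field orthogonal to all $\grad\varphi$ satisfies $\div(\theta\vvW)=0$. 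A field orthogonal to $\vvH_0(\div_\theta=0,Q)$ is a gradient of an $H^1_0$ function by the de Rham type characterization on the simply connected domain $Q$ with connected boundary; here I would cite \cite{Ass.C.L.2018.MFCE}.

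Concretely, for given $\vvV$ I solve the weighted Dirichlet problem: find $p\in H^1_0(Q)$ with $(\theta\grad p,\grad\varphi) = (\theta\vvV,\grad\varphi)$ for all $\varphi\in H^1_0(Q)$, by Lax--Milgram. Then $\vvP_{\theta,0}\vvV = \vvV-\grad p$. Checking $\div(\theta(\vvV-\grad p))=0$ is immediate. The normal condition $(\theta(\vvV-\grad p))\cdot\vvnu=0$ is the delicate point, since the test functions only vanish on $\partial Q$. I would resolve it by instead taking $p\in H^1(Q)$ with the Neumann-type variational problem over all $\varphi\in H^1(Q)$ (modulo constants). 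This enforces both $\div(\theta(\vvV-\grad p))=0$ and the zero normal trace.

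So $\vvP_{\theta,0}\vvV = \vvV - \grad p$ with $p\in H^1(Q)$. Since $p\in H^1(Q)$ globally, $\grad p\in \vvH(\curl,Q)$ with $\curl\grad p = 0$. By \eqref{eq:ConnectionToFullSpaces:Curl}, $\jmp{(\grad p)_\tau}=0$ in $H^{-1/2}_{00}(F)^2$. Since $\vvV\in\vvH(\pcurl,Q)$ by piecewise smoothness, this gives $\pcurl(\vvP_{\theta,0}\vvV) = \pcurl\vvV$ and $\jmp{(\vvP_{\theta,0}\vvV)_\tau}=\jmp{\vvV_\tau}$.

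\emph{Part (b).} The range $\vvH(\pdiv_\theta=0,Q)$ imposes no normal condition and only piecewise divergence freeness. Its complement is therefore $\setc{\grad p}{p^{(i)}\in H^1_0(Q_i),\ i=1,2}$. Orthogonality follows by integrating by parts on each $Q_i$ with $p^{(i)}$ vanishing on $\partial Q_i$. Accordingly, I solve on each sub-cube separately the Dirichlet problem: find $p^{(i)}\in H^1_0(Q_i)$ with $(\theta^{(i)}\grad p^{(i)},\grad\varphi)_{Q_i} = (\theta^{(i)}\vvV^{(i)},\grad\varphi)_{Q_i}$ for all $\varphi\in H^1_0(Q_i)$. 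I then set $\widetilde{\vvP}_\theta\vvV = \vvV - \grad p$, where $p$ is defined piecewise. This field is piecewise $\theta$-divergence free, and $\curl\grad p^{(i)} = 0$ on $Q_i$. Because $p^{(i)}\in H^1_0(Q_i)$, the tangential trace $\grad p^{(i)}\times\vvnu^{(i)}$ vanishes on all of $\partial Q_i$. This is the standard fact that $\grad H^1_0(Q_i)\subset \vvH_0(\curl,Q_i)$, proved by density of $C^\infty_c(Q_i)$. The stated identities follow.

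The main obstacle is the correct choice of the potential's boundary condition in (a): Neumann, so that the zero normal trace of $\theta\vvP\vvV$ holds. I also need the argument that the resulting $p$ lies in $H^1(Q)$ globally rather than only piecewise, since only then does the tangential jump of $\grad p$ vanish. The orthogonal decomposition itself should be quoted from the standard Helmholtz theory. The remaining checks are routine integrations by parts.
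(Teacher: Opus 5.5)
Your argument is correct, but it takes a different route from the paper.

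\textbf{How the paper argues.} The paper never identifies the complement of the range of $\vvP_{\theta,0}$. It only uses that $\theta^{-1}\curl\vvW\in\vvH_0(\div_\theta=0,Q)$ for two classes of test fields: first $\vvW$ piecewise in $C_c^\infty(Q_i)^3$, then $\vvW\in\vvH_0(\curl,Q)\cap H^1(Q)^3$. Hence $\vvV-\vvP_{\theta,0}\vvV$ is $L^2$-orthogonal to $\curl\vvW$. The first class yields equality of the piecewise curls. With the second class, Green's formula on each $Q_i$ identifies the tangential jumps through the $H^{-1/2}_{00}(F)^2$ pairing.

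\textbf{How you argue.} You compute the projection explicitly as $\vvP_{\theta,0}\vvV=\vvV-\grad p$, with $p\in H^1(Q)$ solving the weighted Neumann problem. Both identities are then immediate from $\grad p\in\vvH(\curl,Q)$, $\curl\grad p=0$ and \eqref{eq:ConnectionToFullSpaces:Curl}. Part (b) works the same way with $p^{(i)}\in H^1_0(Q_i)$.

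\textbf{Comparison.} Your route gives the sharper statement that the correction is a global gradient. It applies verbatim to every $\vvV\in\vvH(\pcurl,Q)$, with no piecewise smoothness needed. It also sidesteps the question of whether the traces $\vvW_\parallel$ of the second test class are rich enough to separate elements of $H^{-1/2}_{00}(F)^2$. The paper's route is a pure duality argument and needs no auxiliary elliptic problem.

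\textbf{A correction to your write-up.} The displayed decomposition $L^2(Q)^3=\vvH_0(\div_\theta=0,Q)\oplus\grad H^1_0(Q)$ is false. That is the decomposition for $\vvH(\div_\theta=0,Q)$, the one used for $\redstatespace$ in Section~4. The $\theta$-orthogonal complement of $\vvH_0(\div_\theta=0,Q)$ is $\grad H^1(Q)$, which is exactly what your Neumann construction produces. Drop the Dirichlet version and the de Rham and simple-connectivity remarks; no topology enters. It suffices to check two things:
\begin{itemize}
\item $\theta(\vvV-\grad p)\in\vvH_0(\div=0,Q)$. This follows by testing the variational equation with all $\varphi\in H^1(Q)$: first $\varphi\in C_c^\infty(Q)$ gives zero divergence, then general $\varphi$ gives zero normal trace.
\item $(\theta\grad p,\vvW)_{L^2(Q)^3}=(\grad p,\theta\vvW)_{L^2(Q)^3}=0$ for every $\vvW\in\vvH_0(\div_\theta=0,Q)$, by Green's formula.
\end{itemize}
Finally, the global $H^1$-regularity of $p$ that you list as an obstacle is automatic, since the Neumann problem is posed in $H^1(Q)$, modulo constants.
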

\begin{proof}
    We prove \ref{lem:HodgeProjectors:HdivHomo} in two steps.
            First, let \(\vvW\in L^2(Q)^3\) with \(\vvW^{(i)} \in C^\infty_c(Q_i)^3\).
            Since \(\vvW^{(i)}\) is compactly supported it satisfies \(\jmp{\vvW_\tau} = 0\) and thus
            \(
                \theta^{-1} \pcurl\vvW = \theta^{-1}\curl\vvW \in \vvH_0(\div_\theta = 0, Q).
            \)
            We conclude
            \begin{equation}\label{lem:HodgeProjectors:Hdiv:FirstEq}
                \bigl(\vvV, \pcurl \vvW\bigr)_{L^2(Q)^3}
                = \bigl(\theta \vvV, \theta^{-1}\pcurl\vvW\bigr)_{L^2(Q)^3}
                = \bigl(\vvP_{\theta,0} \vvV, \pcurl \vvW\bigr)_{L^2(Q)^3},
            \end{equation}
            which yields \(\vvP_{\theta,0}\vvV\in \vvH(\pcurl, Q)\) with \(\pcurl(\vvP_{\theta,0}\vvV) = \pcurl\vvV\) in \(Q\).

            Second, let \(\vvW\in \vvH_0(\curl,Q)\cap H^1(Q)^3\).
            Integration by parts on \(Q_i\) implies
            \begin{align*}
                (\pcurl\vvV, \vvW)_{L^2(Q)^3} 
                    - (\vvV, \curl\vvW)_{L^2(Q)^3} 
                    &= \langle \jmp{\vvV_\tau}, \vvW_\parallel\rangle_{1/2},\\
                (\pcurl(\vvP_{\theta,0}\vvV), \vvW)_{L^2(Q)^3} 
                    - (\vvP_{\theta,0}\vvV, \curl\vvW)_{L^2(Q)^3} 
                    &= \langle \jmp{(\vvP_{\theta,0}\vvV)_\tau}, \vvW_\parallel\rangle_{1/2},
            \end{align*}
            where the angle-brackets denote the dual paring in \(H^{1/2}(F)^2\).
            (See the comments after \eqref{eq:ConnectionToFullSpaces}.)
            The left-hand sides are equal after the first step
              and \eqref{lem:HodgeProjectors:Hdiv:FirstEq}, which proves that \(\jmp{(\vvP_{\theta,0}\vvV)_\tau} = \jmp{\vvV_\tau}\) on \(F\).

    The proof of statement \ref{lem:HodgeProjectors:PHdiv} follows along the same lines.
\end{proof}

\subsection*{Trace spaces \& right inverse}

In order to prove embedding and regularity results, we introduce the tangential component trace space associated with regular electric fields that satisfy the electric boundary conditions \eqref{eq:ElectricBoundaryConditions}. 
In the following we denote the facets of \(\partial Q\) with \(\Gamma_i^\pm\), uniquely defined by the outward normal vector \(\vve_i^\pm = \pm(\delta_{1i},\delta_{2i},\delta_{3i})\) for \(i=1,2,3\).
Additionally, we define the edges \(E^\pm_j = \operatorname{int}(\overline{F}\cap \overline{\Gamma^\pm_j})\) for \(j=2,3\) and the Lions-Magenes spaces by real interpolation
\begin{equation}\label{eq:PartialLionsMagenes}
    H^{1/2}_{E^\pm_j}(F) = \bigl[L^2(F), H^1_{E_j^\pm}(F)\bigr]_{1/2,2},
\end{equation}
where \(H^1_{E_j^\pm}(F) =\setc{f \in H^1(F)}{\restrict[]{f}{E_j^\pm} = 0}\).
The tangential trace space of regular vector fields \(\vvE\in \vvV_{N,\theta}^\imp \cap PH^1(Q)^3\) is given by
\begin{equation}\label{eq:VectorTraceSpace}
    \vvE_\parallel\in 
    \vvH^{1/2}_{\parallel,00}(F) 
        := H^{1/2}_{E_{3}^\pm}(F)\times H^{1/2}_{E_{2}^{\pm}}(F),
\end{equation}
and the trace map is continuous for the norm in \(PH^1(Q)^3\), 
see Lemma~7.3 in \cite{Zer.2021}.
The traces admit right-inverses which we gather in the following lemma.
\begin{lem}\label{lem:RightInverse}
    There exists linear and bounded operators
    \begin{subequations}
        \begin{align}
            \vvR_\parallel\,:\, \vvH^{1/2}_{\parallel,00}(F) &\to \vvH_0(\curl,Q)\cap \vvH(\div_\epsilon,Q)\cap H^1(Q)^3,
            \quad &&\text{with  } \bigl(\vvR_\parallel \vvV\bigr)_\parallel = \vvV,
            \label{lem:RightInverse:TangentialComponentTrace}\\
            \vvR_\tau\,:\, \vvH^{1/2}_{\parallel,00}(F) &\to \vvH(\pcurl,Q) \cap \vvH_0(\div_\mu,Q) \cap PH^{1}(Q)^3,
            \quad &&\text{with  } \jmp{(\vvR_\tau\vvV)_\tau} = \vvV.%
            \label{lem:RightInverse:TangentialTrace}
        \end{align}
    \end{subequations}
\end{lem}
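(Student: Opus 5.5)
We construct both right inverses explicitly, working componentwise and exploiting the tensor-product geometry of \(Q\). Write \(\vvV = (V_2,V_3)\) with \(V_2\in H^{1/2}_{E_3^\pm}(F)\) and \(V_3\in H^{1/2}_{E_2^\pm}(F)\). Each component is first lifted to a scalar \(H^1\) function on \(Q_1\) and on \(Q_2\) with prescribed trace on \(F\). The lift must vanish on those faces of \(\partial Q\) where the corresponding tangential component has to vanish.

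For \(\vvR_\parallel\) we proceed as follows.

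\begin{enumerate}
\item \textbf{Lift on \(Q_2\).} For \(V_2\), the boundary condition \(\vvE\times\vvnu=0\) requires \(E_2=0\) on \(\Gamma_1^\pm\) and \(\Gamma_3^\pm\). We therefore need \(u_2\in H^1(Q_2)\) with \(u_2|_F=V_2\), \(u_2=0\) on \(\Gamma_1^+\cap\partial Q_2\), and \(u_2=0\) on \(\Gamma_3^\pm\cap\partial Q_2\). No condition is imposed on \(\Gamma_2^\pm\). By the characterisation of \(H^1\)-traces on polyhedra via compatibility at edges (Lions--Magenes), such a lift exists precisely because \(V_2\) lies in \(H^{1/2}_{E_3^\pm}(F)\): its zero extension across \(E_3^\pm\) onto \(\Gamma_3^\pm\) is \(H^{1/2}\) on the union of faces.
\item \textbf{Concrete construction.} We take the Fourier cosine/sine expansion of \(V_2\) in \((x_2,x_3)\), with sine in \(x_3\) and cosine in \(x_2\). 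The eigenfunctions of \(-\Delta_F\) with mixed conditions then give \(H^{1/2}_{E_3^\pm}(F)\) as the domain of \((-\Delta_F)^{1/4}\) by interpolation. We extend in \(x_1\) by \(e^{-\lambda^{1/2}|x_1-\zeta|}\chi(x_1)\), with a cutoff \(\chi\) vanishing near \(x_1=\pm1\). The standard computation gives \(\|u\|_{H^1(Q_i)}\lesssim\|V_2\|_{H^{1/2}_{E_3^\pm}}\).
\item \textbf{Lift on \(Q_1\).} Reflecting in \(x_1\) gives the lift on \(Q_1\), so \(u\) is continuous across \(F\) and \(u\in H^1(Q)\).
\item \textbf{The other component and \(E_1\).} \(V_3\) is treated in the same way with the roles of \(x_2,x_3\) swapped, and we set \(E_1=0\).
\end{enumerate}

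Then \(\vvR_\parallel\vvV=(0,u_2,u_3)\in H^1(Q)^3\), it has vanishing tangential trace on \(\partial Q\), and it lies in \(\vvH(\div_\epsilon,Q)\). The last point needs a check: \(\epsilon\) is piecewise constant and the normal component \(E_1=0\), so there is no normal jump of \(\epsilon \vvE\cdot\vvn\). The divergence is piecewise \(L^2\), and by \eqref{eq:ConnectionToFullSpaces:Div} the field belongs to \(\vvH(\div_\epsilon,Q)\).

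For \(\vvR_\tau\) we use the same scalar lifts, but odd rather than even in \(x_1-\zeta\), scaled so that the tangential jump equals \(\vvV\). Concretely, we put \(\vvW^{(2)}=\tfrac12\tilde{\vvV}\) and \(\vvW^{(1)}=-\tfrac12\tilde{\vvV}\), rotated so that \(\jmp{\vvW\times\vvn}=\vvV\). The first component is again zero, so \(\mu\vvW\cdot\vvnu=0\) holds on \(\Gamma_1^\pm\) trivially. On \(\Gamma_2^\pm\) and \(\Gamma_3^\pm\) the normal component must vanish. This is a zero Dirichlet condition on the normal component, and after the rotation it is exactly the vanishing at \(E_2^\pm\) and \(E_3^\pm\) encoded in \(\vvH^{1/2}_{\parallel,00}(F)\). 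As before, the cutoff makes \(\vvW\) vanish near \(x_1=\pm1\). Since no tangential continuity across \(F\) is required, \(\vvW\in PH^1(Q)^3\subset\vvH(\pcurl,Q)\). The normal component across \(F\) is zero, so the jump of \(\mu\vvW\cdot\vvn\) vanishes and \(\vvW\in\vvH_0(\div_\mu,Q)\).

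The main obstacle is the first step: identifying the interpolation spaces \eqref{eq:PartialLionsMagenes} with the spectral domain \(D((-\Delta_F)^{1/4})\) for the mixed Dirichlet--Neumann Laplacian on the square. This identification is what makes the Fourier-based extension bounded in \(H^1\) up to the edges. We would handle it by the tensor-product structure, interpolating separately in each variable, where the one-dimensional results \(H^{1/2}_{00}\) and \(H^{1/2}\) are classical. Alternatively, we would cite Lemma~7.3 and the corresponding lifting result in \cite{Zer.2021}.
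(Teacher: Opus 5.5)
Your construction is correct. The paper does not prove this lemma itself; it defers to the construction in Section~4 of \cite{Dor.Z.2023}. Your proposal is therefore a genuinely self-contained alternative, based on separation of variables on the box:
\begin{itemize}
\item You expand $V_2$ in $\cos(n\pi x_2)\sin(m\pi x_3)$ and $V_3$ in $\sin(n\pi x_2)\cos(m\pi x_3)$, and lift each mode by $e^{-\sqrt{\lambda_{nm}}|x_1-\zeta|}\chi(x_1)$. This gives $\|u\|_{H^1(Q)}^2\lesssim\sum_{n,m}|b_{nm}|^2\lambda_{nm}^{1/2}$.
\item The boundary conditions on $\Gamma_2^\pm$ and $\Gamma_3^\pm$ are built into the trigonometric factors.
\item The divergence conditions are automatic. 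The first component vanishes and $\epsilon,\mu$ depend on $x_1$ only, so, e.g., $\div(\epsilon\vvE)=\epsilon(\partial_2E_2+\partial_3E_3)$ holds in all of $Q$.
\end{itemize}

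You observe that the rotation interchanges exactly the edge conditions coming from $\vvE\times\vvnu=0$ and from $\mu\vvW\cdot\vvnu=0$. This can be stated as a formula: $\vvR_\tau\vvV=\pm\tfrac12\,\vvn\times\vvR_\parallel\vvV$ on $Q_2$ resp.\ $Q_1$ satisfies $\jmp{(\vvR_\tau\vvV)_\tau}=\vvV$. So does the simpler choice $\vvn\times\vvR_\parallel\vvV$ on $Q_2$ and $0$ on $Q_1$.

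What your route buys is an explicit, elementary lift and a transparent link between the two right inverses. Its price is that it depends on the separable box geometry with a flat interface orthogonal to $\vve_1$.

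Two minor points:
\begin{itemize}
\item The step you call the main obstacle is routine. $H^1_{E_3^\pm}(F)$ is the form domain of the positive definite Laplacian $A$ on $F$ with Dirichlet conditions on $E_3^\pm$ and Neumann conditions on $E_2^\pm$. Hence $[L^2(F),D(A^{1/2})]_{1/2,2}=D(A^{1/4})$ by the standard interpolation of fractional-power domains, exactly as the paper argues for $H^{1/2}_{00}$ in the proof of Lemma~\ref{lem:RellichInequality}.
\item The reflection step is superfluous, since $e^{-\sqrt{\lambda}|x_1-\zeta|}\chi(x_1)$ is already defined on $(-1,1)$. A literal reflection maps $Q_2$ onto $Q_1$ only if $\zeta=0$. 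This is harmless once $\chi$ is supported within distance $1-|\zeta|$ of $\zeta$.
\end{itemize}
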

The right inverse in \eqref{lem:RightInverse:TangentialComponentTrace} shows that the tangential component trace is surjective.
This is a well-known result, compare, for example Remark~6.7 in \cite{Buf.C.2001b} or Lemma~4.1 in \cite{Dor.Z.2023} for the specfic box geometry.
The second part in \eqref{lem:RightInverse:TangentialTrace} is non-standard and shows that we also find a representative of the rotated tangential component trace that is compatible with the conditions posed on the magnetic field.
We do not prove this result here since it is a variant of \cite[Lem.~4.1]{Dor.Z.2023}, and refer to the  detailed construction in Section~4 therein.

\subsection*{Tangential spaces}

In what follows, we adapt well-known results to our piecewise setting.
\begin{lem}\label{lem:HWeber}
    The estimate
    \begin{equation*}
        \|\vvH\|_{L^2(Q)^3} 
            \leq C
                \bigl( 
                    \|\pcurl \vvH\|_{L^2(Q)^3} + \|\jmp{\vvH_\tau}\|_{L^2(F)^2}
                \bigr)
    \end{equation*}
    is valid for all \(\vvH \in\vvV_{T,\theta}^\imp(Q)\) with a uniform constant \(C=C(\theta, Q) >0\).
\end{lem}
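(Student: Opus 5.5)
We argue by contradiction, combining a compactness argument with a uniqueness statement. Suppose the estimate fails. Then there is a sequence \((\vvH_n)\subset \vvV^\imp_{T,\theta}(Q)\) with \(\|\vvH_n\|_{L^2(Q)^3}=1\), \(\pcurl\vvH_n\to 0\) in \(L^2(Q)^3\) and \(\jmp{(\vvH_n)_\tau}\to 0\) in \(L^2(F)^2\). The sequence is therefore bounded in the graph norm of \(\vvV^\imp_{T,\theta}(Q)\). The argument rests on two ingredients: a compact embedding of \(\vvV^\imp_{T,\theta}(Q)\) into \(L^2(Q)^3\), and the triviality of the kernel
\[
\setc{\vvH\in\vvV^\imp_{T,\theta}(Q)}{\pcurl\vvH=0,\ \jmp{\vvH_\tau}=0}.
\]
Given both, pass to a subsequence that converges strongly in \(L^2\) to some \(\vvH\). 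This limit satisfies \(\|\vvH\|_{L^2}=1\), \(\pcurl\vvH=0\), \(\jmp{\vvH_\tau}=0\), and \(\vvH\in\vvH_0(\div_\theta=0,Q)\) by closedness, which contradicts the triviality of the kernel.

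\emph{Kernel.} Let \(\vvH\) lie in the kernel. Since \(\jmp{\vvH_\tau}=0\), the characterization \eqref{eq:ConnectionToFullSpaces:Curl} gives \(\vvH\in\vvH(\curl,Q)\) with \(\curl\vvH=0\). We also have \(\div(\theta\vvH)=0\) and \(\theta\vvH\cdot\vvnu=0\) on \(\partial Q\). Because the cuboid \(Q\) is simply connected, \(\vvH=\grad p\) for some \(p\in H^1(Q)\). Then \(p\) solves the Neumann problem \(\div(\theta\grad p)=0\) with \(\theta\,\partial_\nu p=0\), so \(p\) is constant and \(\vvH=0\).

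\emph{Compactness.} This is the main obstacle. The jump \(\jmp{\vvH_\tau}\) is only in \(L^2(F)^2\), so the standard compactness of \(\vvH(\curl)\cap\vvH_0(\div_\theta)\) (Weber) does not apply directly. To remove the jump, I would subtract a lifting. Where the jump lies in \(\vvH^{1/2}_{\parallel,00}(F)\), I would use \(\vvR_\tau\) from Lemma~\ref{lem:RightInverse}, but a general \(L^2\) jump does not have this regularity. Instead I would lift the \(L^2(F)^2\) datum by solving an auxiliary transmission problem on each \(Q_i\): find \(\vvW\in\vvH(\pcurl,Q)\cap\vvH_0(\div_\theta=0,Q)\) with \(\jmp{\vvW_\tau}=\jmp{\vvH_\tau}\), bounded in \(H^{1/2}\) on each \(Q_i\), for instance through harmonic extension of a scalar or vector potential of the jump on each sub-cube. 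Then \(\vvH-\vvW\in\vvH(\curl,Q)\cap\vvH_0(\div_\theta,Q)\), where Weber's compact embedding applies for piecewise constant \(\theta\). The field \(\vvW\) is compact into \(L^2\) through \(H^{1/2}(Q_i)\hookrightarrow\hookrightarrow L^2(Q_i)\).

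As an alternative route to compactness, I would use the Helmholtz decomposition on each sub-cube. Since \(\pcurl\vvH\) and \(\div(\theta\vvH)\) are in \(L^2\) and the tangential trace is controlled in \(L^2\) on \(F\) (the boundary parts of \(\partial Q_i\) carry normal conditions only), the classical fact that \(\vvH(\curl,Q_i)\cap\vvH(\div,Q_i)\) with \(L^2\) tangential trace on \(\partial Q_i\) embeds into \(H^{1/2}(Q_i)^3\) (Costabel) would give the compactness. The difficulty is that only the jump of the tangential traces is in \(L^2\), not each trace individually. I would therefore first handle a jump-free correction via the kernel argument and then apply this embedding to the lifting part.
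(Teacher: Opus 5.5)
Your contradiction framework and your kernel argument are correct. However, the whole content of the lemma lies in the compactness of \(\vvV^\imp_{T,\theta}(Q)\hookrightarrow L^2(Q)^3\), and you do not prove this step.

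The lifting you ask for is not a real reduction. \(\vvW=\vvH\) already satisfies \(\vvW\in\vvH(\pcurl,Q)\cap\vvH_0(\div_\theta=0,Q)\) and \(\jmp{\vvW_\tau}=\jmp{\vvH_\tau}\). So the only substance is a piecewise \(H^{1/2}\) bound in terms of the graph norm, which is essentially the claim itself.

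As phrased (``lift the \(L^2(F)^2\) datum''), the lifting also cannot work. Every \(\vvW\in\vvH(\pcurl,Q)\) satisfies \(\div_F\jmp{\vvW_\tau}=\jmp{(\pcurl\vvW)_\perp}\in H^{-1/2}_{00}(F)\), whereas a general \(\vvG\in L^2(F)^2\) only has \(\div_F\vvG\in H^{-1}(F)\). A bounded lifting must therefore use this compatibility, controlled by \(\|\pcurl\vvH\|_{L^2(Q)^3}\). For instance, you could split \(\jmp{\vvH_\tau}\) into two parts:
\begin{itemize}
\item a surface gradient \(\grad_F u\) with \(u\in H^{3/2}(F)\cap H^1_0(F)\), liftable with \(\vvR_\tau\) via Corollary~\ref{cor:TangentialRegularityLemma};
\item a rotated surface gradient, which would have to be lifted by piecewise gradients solving a transmission problem with an \(H^1(F)\) Dirichlet jump, shown to be \(PH^{3/2}\)-regular on the cuboid.
\end{itemize}
None of this is supplied. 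Your second route fails, as you note yourself, because only the jump of the tangential traces lies in \(L^2(F)^2\).

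The missing idea is to work with a vector potential instead of a lifting. Since \(\theta\vvH\in\vvH_0(\div,Q)\) is divergence free and \(Q\) is a cuboid, you can write \(\theta\vvH=\curl\vvPhi\) with \(\vvPhi\in\vvH_0(\curl,Q)\cap\vvH(\div_\theta=0,Q)\). This is a standard space with homogeneous interface conditions. It embeds continuously into \(PH^1(Q)^3\), and \(\|\curl\vvPhi\|_{L^2(Q)^3}\) is an equivalent norm on it. Testing with \(\curl\vvPhi\) and integrating by parts on \(Q_1\) and \(Q_2\) gives
\[
\bigl\|\theta^{-1/2}\curl\vvPhi\bigr\|_{L^2(Q)^3}^2=\bigl(\pcurl\vvH,\vvPhi\bigr)_{L^2(Q)^3}-\bigl(\jmp{\vvH_\tau},\vvPhi_\parallel\bigr)_{L^2(F)^2}.
\]
Here the \(L^2\) jump is paired against the \(H^{1/2}\)-trace of \(\vvPhi\), and the trace theorem yields the estimate directly. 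This is the paper's proof; it needs compactness only for the standard space containing \(\vvPhi\).

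If you want to keep your framework, the same identity applied to \(\vvH_n-\vvH_m\) supplies the missing compactness. The potentials \(\vvPhi_n\) are bounded in \(PH^1(Q)^3\). After passing to a subsequence, they converge in \(L^2(Q)^3\) and their traces converge in \(L^2(F)^2\). The right-hand side then shows that \((\vvH_n)\) is Cauchy in \(L^2(Q)^3\).
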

\begin{proof}
    Let \(\vvH \in\vvV_{T,\theta}^\imp(Q)\).
    There exists a unique vector potential \(\vvPhi \in PH^1(Q)^3\) such that
    \begin{equation}\label{eq:MagneticVectorPotential}
        \vvH = \frac{1}{\theta} \curl \vvPhi,
        \quad \vvPhi \in\vvH_0(\curl,Q)\cap\vvH(\div_\theta=0,Q).
    \end{equation}
    See, for example, \cite[Lem.~9.6]{Zer.2021}.
    We multipy \eqref{eq:MagneticVectorPotential} with \(\curl \vvPhi\) and integrate by parts in \(Q_i\) to obtain
    \begin{equation*}
        \bigl(\theta^{-1}\curl \vvPhi, \curl\vvPhi\bigr)_{L^2(Q)^3}
            = \bigl(\pcurl \vvH, \vvPhi\bigr)_{L^2(Q)^3} - \bigl(\jmp{\vvH_\tau}, \vvPhi_\parallel\bigr)_{L^2_\tau(F)^3}.
    \end{equation*}
    Cauchy-Schwarz and the trace inequality then imply
    \begin{equation*}
        \|\theta^{-1} \curl \vvPhi\|^2_{L^2(Q)^3}
            \leq C \|\vvPhi\|_{PH^1(Q)^3}
                \Bigl(\|\pcurl \vvH\|_{L^2(Q)^3} + \|\jmp{\vvH_\tau}\|_{L^2_\tau(F)^3}\Bigr),
    \end{equation*}
    where \(C = C(\theta, Q) > 0\).
    By \cite[Prop. 9.2]{Zer.2021}, the embedding \(\vvH_0(\curl,Q)\cap \vvH(\div_\theta= 0, Q) \hookrightarrow  PH^1(Q)^3\) is continuous, and \cite[Lemma 9.2]{Zer.2021} together with a compactness argument shows that   
   \(\|\theta^{-1} \curl \vvPhi\|^2_{L^2(Q)^3}\) is a norm on \(\vvH_0(\curl,Q)\cap\vvH(\div_\theta=0,Q)\) equivalent to its natural norm,
    which closes the proof. 
\end{proof}

\begin{thm}\label{thm:HDensity}
    The set \(PH^1(Q)^3\cap\vvV_{T,\theta}^\imp(Q)\) is dense in \(\vvV_{T,\theta}^\imp(Q)\) with respect to the graph norm.
\end{thm}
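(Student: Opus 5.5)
The plan is to reduce the density statement to the right inverse \(\vvR_\tau\) of Lemma~\ref{lem:RightInverse}, combined with the fact that fields with vanishing tangential jump are automatically piecewise \(H^1\). Fix \(\vvH\in\vvV_{T,\theta}^\imp(Q)\) and set \(\vvG=\jmp{\vvH_\tau}\in L^2(F)^2\). Since \(\vvH^{1/2}_{\parallel,00}(F)\) contains \(C_c^\infty(F)^2\), it is dense in \(L^2(F)^2\), so we can choose \(\vvG_n\in\vvH^{1/2}_{\parallel,00}(F)\) with \(\vvG_n\to\vvG\) in \(L^2(F)^2\). The approximants will be
\[
\vvH_n=\vvW_n+\vvK_n,
\]
where \(\vvW_n\) is a regular divergence-free field with \(\jmp{(\vvW_n)_\tau}=\vvG_n\), and \(\vvK_n\) is a correction with vanishing jump that takes care of the curl.

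\textbf{Step 1 (regular lifting).} Put \(\vvW_n=\vvP_{\theta,0}\vvR_\tau\vvG_n\). The field \(\vvR_\tau\vvG_n\) lies in \(\vvH(\pcurl,Q)\cap\vvH_0(\div_\mu,Q)\cap PH^1(Q)^3\) but need not be \(\theta\)-divergence-free, which is why we project. By Lemma~\ref{lem:HodgeProjectors}~\ref{lem:HodgeProjectors:HdivHomo}, extended from piecewise smooth to \(PH^1\) fields by density, the projection preserves both \(\pcurl\) and the tangential jump. In fact \(\vvP_{\theta,0}\) only subtracts a gradient \(\nabla p\) with \(p\in H^1(Q)\) solving a Neumann transmission problem, which makes the preservation transparent. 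It remains to show \(\vvW_n\in PH^1(Q)^3\). Since \(\nabla p\in\vvH(\curl,Q)\cap\vvH_0(\div_\theta,Q)\) with \(\curl\nabla p=0\), we have \(\operatorname{div}(\theta\nabla p)=\operatorname{div}(\theta\vvR_\tau\vvG_n)\in L^2(Q)\), and the normal trace vanishes. Piecewise \(H^1\)-regularity of \(\nabla p\) should then follow from the embedding \(\vvH(\curl,Q)\cap\vvH_0(\div_\theta,Q)\hookrightarrow PH^1(Q)^3\), which is the magnetic analogue of \cite[Prop.~9.2]{Zer.2021}. Thus \(\vvW_n\in PH^1(Q)^3\cap\vvV_{T,\theta}^\imp(Q)\).

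\textbf{Step 2 (the remainder).} Let \(\vvK_n=\vvH-\vvW_n\). It belongs to \(\vvV_{T,\theta}^\imp(Q)\) and satisfies \(\jmp{(\vvK_n)_\tau}=\vvG-\vvG_n\), which does not vanish. To handle this, I would instead decompose with a fixed remainder. Write \(\vvH=\vvH^0+\vvW\), where \(\vvW\) solves the \(\theta\)-weighted problem
\[
\pcurl\bigl(\theta^{-1}\pcurl\vvW\bigr)\ \text{ with jump } \vvG,
\]
or, more simply, is defined by linearity: \(\vvW=T\vvG\), where \(T\colon L^2(F)^2\to\vvV_{T,\theta}^\imp(Q)\) is a bounded lifting. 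Such a \(T\) can be built from the variational problem in the surjectivity lemma, with right-hand side involving \(\vvG\) through the interface term. Then \(\vvH^0=\vvH-T\vvG\) has zero jump, so \(\vvH^0\in\vvH(\curl,Q)\cap\vvH_0(\div_\theta=0,Q)\subset PH^1(Q)^3\) by the Zerulla embedding. Since \(\vvG\mapsto T\vvG\) is continuous into the graph norm, it suffices to check that \(T\vvG_n\in PH^1\) whenever \(\vvG_n\in\vvH^{1/2}_{\parallel,00}(F)\). This holds because \(T\vvG_n-\vvW_n\) has zero jump, so it lies in \(PH^1\) by the same embedding, while \(\vvW_n\in PH^1\) by Step 1. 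Hence \(\vvH_n=\vvH^0+T\vvG_n\to\vvH\) in the graph norm, and each \(\vvH_n\) lies in \(PH^1(Q)^3\cap\vvV_{T,\theta}^\imp(Q)\).

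\textbf{Main obstacle.} The delicate point is constructing the bounded lifting \(T\) on \(L^2(F)^2\) with \(\jmp{(T\vvG)_\tau}=\vvG\). My first attempt would be to minimize the coercive form \(a\) from the surjectivity proof, adjusted so that the interface term enforces the jump. An alternative is to take \(T\vvG\) as the solution of a constrained problem: minimize \(\|\pcurl\vvW\|^2\) over \(\vvW\in\vvV_{T,\theta}^\imp(Q)\) subject to \(\jmp{\vvW_\tau}=\vvG\). Boundedness of \(T\) would then come from Lemma~\ref{lem:HWeber} together with the existence of at least one admissible field, which can be obtained by approximating with the fields of Step 1. This reasoning comes dangerously close to being circular, since existence of an admissible field for general \(\vvG\in L^2(F)^2\) is essentially the density claim being proved. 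If that fails, I would instead prove density directly via a duality argument. Suppose \(\vvH\) is orthogonal in the graph inner product to all \(PH^1\) elements. Testing against \(\vvW_n\) and against \(\vvH(\curl)\)-type fields, and using Lemma~\ref{lem:HodgeProjectors}, one derives a curl–curl problem for \(\vvH\) with a jump condition, which is then shown to force \(\vvH=0\).
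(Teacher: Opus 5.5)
\textbf{Genuine gap.} Your Step~2 relies on a bounded lifting \(T\colon L^2(F)^2\to\vvV_{T,\theta}^\imp(Q)\) with \(\jmp{(T\vvG)_\tau}=\vvG\). This map does not exist. The problem is not circularity but a constraint on the range of the jump map. For every \(\vvV\in\vvV_{T,\theta}^\imp(Q)\), the fields \(\pcurl\vvV^{(i)}\) are divergence-free in \(Q_i\). Hence their normal traces on \(F\) lie in \(H^{-1/2}_{00}(F)\), and
\[
    \div_F\jmp{\vvV_\tau}=\jmp{(\pcurl\vvV)_\perp},
    \qquad
    \|\div_F\jmp{\vvV_\tau}\|_{H^{-1/2}_{00}(F)}\lesssim\|\pcurl\vvV\|_{L^2(Q)^3}.
\]
So every admissible jump satisfies \(\div_F\vvG\in H^{-1/2}_{00}(F)\). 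This fails, e.g., for \(\vvG=\grad_F u\) with \(u\in H^1_0(F)\setminus H^{3/2}(F)\), by the elliptic shift used in Lemma~\ref{lem:FirstHiddenRegularity}. A bounded \(T\) would even give \(\|\div_F\vvG\|_{H^{-1/2}_{00}(F)}\lesssim\|\vvG\|_{L^2(F)^2}\), which is false.

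The same identity shows that no construction can work with your \(\vvG_n\). Graph-norm convergence \(\vvH_n\to\vvH\) forces \(\div_F\vvG_n\to\div_F\vvG\) in \(H^{-1/2}_{00}(F)\). A mere \(L^2\)-approximation of \(\vvG\) by elements of \(\vvH^{1/2}_{\parallel,00}(F)\) need not satisfy this. The duality fallback you sketch is too unspecific to repair it.

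\textbf{The missing idea: a compatible approximation of the jump.} The paper takes \(\widehat{\vvG}_n\in C_c^\infty(F)^2\) with \(\widehat{\vvG}_n\to\vvG\) in \(L^2(F)^2\) and sets \(\vvG_n=\widehat{\vvG}_n+\grad_F u_n\). Here \(u_n\in H^1_0(F)\) solves \((\grad_F u_n,\grad_F v)_{L^2(F)^2}=(\vvG-\widehat{\vvG}_n,\grad_F v)_{L^2(F)^2}\) for all \(v\in H^1_0(F)\). This gives:
\begin{enumerate}[label=(\roman*)]
\item \(\div_F\vvG_n=\div_F\vvG\);
\item \(\|\vvG_n-\vvG\|_{L^2(F)^2}\le 2\|\widehat{\vvG}_n-\vvG\|_{L^2(F)^2}\);
\item since \(\Delta_F u_n\in H^{-1/2}_{00}(F)\), one gets \(u_n\in H^{3/2}(F)\cap H^1_0(F)\), so \(\vvG_n\in\vvH^{1/2}_{\parallel,00}(F)\) by Corollary~\ref{cor:TangentialRegularityLemma}.
\end{enumerate}
The paper then keeps \(\pcurl\vvH\) fixed by solving a transmission problem for the vector potential with jump \(\vvG_n\), and proves \(PH^2\)-regularity of that potential componentwise.

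With this \(\vvG_n\), your own Step~1 would also close the argument, by a different route. Take \(\vvW_n=\vvP_{\theta,0}\vvR_\tau\vvG_n\) as you do. Then \(\pcurl\vvH-\pcurl\vvW_n\) is piecewise divergence-free with normal jump \(\div_F\vvG-\div_F\vvG_n=0\), hence divergence-free on the whole cube. So it equals \(\curl\bm{K}_n\) for some \(\bm{K}_n\in\vvH(\curl,Q)\cap\vvH_0(\div_\theta=0,Q)\subset PH^1(Q)^3\). The field \(\vvH_n=\vvW_n+\bm{K}_n\) then satisfies \(\pcurl\vvH_n=\pcurl\vvH\) and \(\jmp{(\vvH_n)_\tau}=\vvG_n\to\vvG\). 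Lemma~\ref{lem:HWeber} gives convergence in the graph norm.
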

\begin{proof}
    Let \(\vvH\in\vvV_{T,\theta}^\imp(Q)\).
    By the proof of Lemma~\ref{lem:HWeber}, there exists a unique vector potential \(\vvPhi \in PH^1(Q)^3\) such that
    \begin{equation*}
        \vvH = \frac{1}{\theta} \curl \vvPhi,
        \quad \vvPhi \in\vvH_0(\curl,Q)\cap \vvH(\div_\theta = 0,Q).
    \end{equation*}
    Hence, \(\jmp{(\theta^{-1}\curl \vvPhi)_\tau}\) belongs to  \(L^2(F)^2\).
    Choose \(\bigl\{\widehat{\vvG}_n\bigr\}_{n\geq 0}\) in \(C_c^\infty(F)^2\subset\vvH^{1/2}_{\parallel,00}(F)\) such that
    \begin{equation*}
        \bigl\| \jmp{(\theta^{-1}\curl \vvPhi)_\tau} - \widehat{\vvG}_n \bigr\|_{L^2(F)^2}
            \to 0,
            \qquad
            n\to \infty.
    \end{equation*}

    Put \(\vvG = \jmp{\vvH_\tau}\) and let \(u_n\in H^1_0(F)\) solve
    \begin{equation}\label{eq:CorrectionWeakFormula}
        (\grad_F u_n,\grad_F v)_{L^2(F)^2}
            = (\vvG-\widehat{\vvG}_n,\grad_F v)_{L^2(F)^2},
        \qquad v\in H^1_0(F).
    \end{equation}
    Since \(\div_F\vvG = \jmp{(\pcurl\vvH)_\perp}\in H^{-1/2}_{00}(F)\) by \cite[(2.23)]{Ass.C.L.2018.MFCE}, elliptic regularity gives \(u_n\in H^{3/2}(F)\cap H^1_0(F)\).
    See the proof of Lemma~\ref{lem:FirstHiddenRegularity} below.
    Corollary~\ref{cor:TangentialRegularityLemma} then implies \(\grad_Fu_n\in\vvH^{1/2}_{\parallel,00}(F)\).
    Thus, \(\vvG_n = \widehat{\vvG}_n+\grad_Fu_n\) belongs to this trace space and satisfies
    \[
        \div_F\vvG_n = \div_F\vvG,
        \qquad
        \|\vvG_n-\vvG\|_{L^2(F)^2}
            \leq 2\|\widehat{\vvG}_n-\vvG\|_{L^2(F)^2}\to0,
    \]
    where the estimate follows by testing \eqref{eq:CorrectionWeakFormula} with \(u_n\).

    We construct a sequence of vector fields \(\monosetc{\vvPhi_n}_{n\geq 0}\) solving the elliptic transmission problem
    \begin{subequations}\label{eq:VectorEllipticTransmissionProblem}
        \begin{align}
            \curl \theta^{-1} \curl \vvPhi_n^{(i)} 
                &= \curl \theta^{-1} \curl \vvPhi^{(i)},
                &&\text{in } Q_i,\\
            \jmp{(\theta^{-1} \curl\vvPhi_n)_\tau} 
                &= \vvG_n,
                &&\text{on } F,\\
            \vvPhi_n \in \vvH_0&(\curl,Q)\cap \vvH(\div_\theta = 0,Q).
        \end{align}
    \end{subequations}
    Set \(\vvF = \pcurl\theta^{-1}\curl \Phi \in L^2(Q)^3\) and take \(W_N = \vvH_0(\curl,Q)\cap \vvH(\div_\theta = 0,Q)\).
    We define the forms
    \begin{align*}
        a \,:\, W_N\times W_N &\to \R,\qquad
        &a(\vvPhi,\vvPsi) &= \bigl(\theta^{-1}\curl \vvPhi, \curl \vvPsi\bigr)_{L^2(Q)^3},\\
        \ell \,:\, W_N &\to \R, \qquad
        &\ell(\vvPsi) &= \bigl(\vvF, \vvPsi\bigr)_{L^2(Q)^3} - \langle \vvPsi_\parallel, \vvG_n\rangle_{-1/2\times 1/2}.
    \end{align*}
    Both forms are continuous on \(W_N\), and the bilinear form \(a\) is coercive due to the Weber inequality proven in \cite[Lem.~9.4]{Zer.2021}.
    Hence, the Lax--Milgram lemma provides maps \(\vvPhi_n\) in \(W_N\) that solve \eqref{eq:VectorEllipticTransmissionProblem} weakly with test functions in \(W_N\).

    The compatibility \(\div_F\vvG_n = \jmp{\vvF_\perp}\) gives, for \(q\in H^1_0(Q)\),
    \begin{align*}
        \ell(\grad q)
            &= (\vvF,\grad q)_{L^2(Q)^3}
                - \langle\grad_F(q|_F),\vvG_n\rangle\\
            &= -\langle\jmp{\vvF_\perp},q|_F\rangle
                + \langle\div_F\vvG_n,q|_F\rangle\\
            &= \langle\div_F\vvG_n-\jmp{\vvF_\perp},q|_F\rangle = 0.
    \end{align*}
    Here Green's formula is applied in each \(Q_i\) and on \(F\), using \(\pdiv\vvF=0\).
    The Helmholtz decomposition \(\vvH_0(\curl,Q) = W_N\oplus\grad H^1_0(Q)\) from \cite[(6.16)]{Ass.C.L.2018.MFCE}, therefore extends the variational identity \(a(\vvPhi_n, \Psi) = \ell(\Psi)\) to all test fields \(\Psi\in \vvH_0(\curl,Q)\), yielding the bulk and interface equations in \eqref{eq:VectorEllipticTransmissionProblem}.

    Furthermore, \(\vvPhi_n\) belongs to  \(PH^1(Q)^3\) for \(n\geq 0\), since \(W_N \hookrightarrow PH^1(Q)^3\) continuously by Proposition~9.3 in \cite{Zer.2021}.
    
    We apply elliptic regularity to every component of \(\vvPhi_n = \bigl(\Phi_{1,n},\Phi_{2,n},\Phi_{3,n}\bigr)\) individually.
    Because of \(\vvPhi_n \in \vvH_0(\curl,Q)\), Corollary~3.1.16 of \cite{Ass.C.L.2018.MFCE} and \eqref{eq:ConnectionToFullSpaces:Div} imply that \(\curl\vvPhi_n\) belongs to \(\vvH_0(\div,Q)\).
    This allows to apply the vector-Laplace identity \(\curl \curl  = -\bm{\Delta} + \grad \div\) in \(H^1(Q_i)\), for \(i\in \monosetc{1,2}\), and we see that \(\Phi_{1,n}\) is the weak solution of
    \begin{subequations}\label{eq:PiecwieseEllipticEquation}
        \begin{align}
            -\theta\Delta \Phi_{1,n}^{(i)} &= \theta^2 F^{(i)}_1,
                \qquad &\text{in } Q_i,\\
            \jmp{\theta \Phi_{1,n}} &= 0,
                \qquad &\text{on } F,
                \label{eq:PiecwieseEllipticEquation:ThetaHomoDirichletJmp}\\
            \Phi_{1,n} &= 0,
                \qquad &\text{on } \Gamma_2^{\pm} \cup \Gamma_3^{\pm},
                \label{eq:PiecwieseEllipticEquation:HomoDirichlet}
        \end{align}
        where we use test functions \(\psi \in PH^1(Q)\) that satisfy \eqref{eq:PiecwieseEllipticEquation:ThetaHomoDirichletJmp} and \eqref{eq:PiecwieseEllipticEquation:HomoDirichlet}.
        Hence, Proposition~8.1 in \cite{Zer.2021} with right-hand side \(f = \theta^2F_1^{(i)}\) implies that \(\Phi_{1,n} \in PH^2(Q)\),  as well as
        \begin{align}
            \jmp{\partial_1 \Phi_{1,n}} &= 0,
                \qquad &\text{on } F,\\
             \partial_\vvnu \Phi_{1,n} &= 0,
                \qquad &\text{on } \Gamma_1^{\pm}.
        \end{align}
    \end{subequations}

    Analogously, the second component is the weak solution of
        \begin{subequations}
        \begin{align}
            -\Delta \Phi_{2,n}^{(i)} &= \theta F^{(i)}_2,
                \qquad &\text{in } Q_i,\\
            \jmp{\Phi_{2,n}} &= 0,
                \qquad &\text{on } F,\\
            \Phi_{2,n} &= 0,
                \qquad &\text{on } \Gamma_1^{\pm} \cup \Gamma_3^{\pm},\\
            \jmp{\theta^{-1}\partial_1 \Phi_{2,n}} &= \jmp{\theta^{-1}\partial_2 \Phi_{1,n}} - G_{1,n},
                \qquad &\text{on } F.
        \end{align}
        Here we use that due to \eqref{eq:VectorTraceSpace}, the map \(G_{1,n}\) is element of \(H^{1/2}_{E_3^\pm}(F)\) for \(n\geq 0\).
        Moreover, since \(\Phi_{1,n} \in PH^2(Q)\), Lemma~2.1 of \cite{Eil.Sch.2018} and \eqref{eq:PiecwieseEllipticEquation:HomoDirichlet} imply that \(\partial_2 \Phi_{1,n}\) is contained in \(PH^1_{\Gamma_3^\pm}(Q)\).
        Hence, \cite[Lem.~7.3]{Zer.2021} yields \(\jmp{\theta^{-1} \partial_2 \Phi_{1,n}} \in H^{1/2}_{E_3^\pm}(F)\).
        From the elliptic regularity result \cite[Lemm.~8.13]{Zer.2021} we deduce \(\Phi_{2,n} \in PH^2(Q)\) and the properties
        \begin{align}
            \partial_\vvnu \Phi_{2,n} &= 0,
                \qquad &\text{on } \Gamma_2^{\pm}.
        \end{align}
    \end{subequations}
    
    Repetition of the latter argument for the third component shows that \(\Phi_{3,n} \in PH^2(Q)\), and, therefore, that
    \(\vvPhi_n \in PH^2(Q)^3\) for \(n\geq 0\).

    Thus, the fields \(\vvH_n = \theta^{-1} \curl\vvPhi_n\) for \(n\geq 0\), belong to \(PH^1(Q)^3 \cap \vvV_{T,\theta}^\imp(Q)\) with
    \begin{align*}
        \|\pcurl \vvH - \pcurl \vvH_n\|_{L^2(Q)^3}
        \lesssim \sum_{i\in\monosetc{1,2}}
            \|\pcurl\theta^{-1}\curl(\vvPhi^{(i)} - \vvPhi_n^{(i)})\|_{L^2(Q_i)^3} &= 0,\, n \geq 0,\\
        \|\jmp{\vvH_\tau} - \jmp{(\vvH_{n})_\tau} \|_{L^2(F)^2} 
            = \bigl\| \jmp{(\theta^{-1}\curl \vvPhi)_\tau} - \vvG_n \bigr\|_{L^2(F)^2}
            &\to 0,
        \, n\to \infty.
    \end{align*}
    Lemma~\ref{lem:HWeber} then proves the claim.
\end{proof}

\subsection*{Normal spaces}

We establish similar results for the space \(\vvV_{N,\theta}^\imp(Q)\) as in the previous section.
The proofs follow along the same lines, but with a minor difference.
Whereas in the space \(\vvV_{T,\theta}^\imp(Q)\) the regularity of the discontinuous tangential jump is controlled, in \(\vvV_{N,\theta}^\imp(Q)\), the regularity of the continuous tangential component is controlled, and the discontinuous normal jump is, a priori, uncontrolled.
Therefore, we first extend a known result, \cf{} \cite[Thm.~2]{Cos.1990}, to our interface setting, that bounds the regularity of the normal jump in \(\vvV_{N,\theta}^\imp(Q)\) by the regularity of the tangential component.
\begin{lem}\label{lem:RellichInequality}
    \begin{enumerate}[label=\normalfont(\alph*)]
        \item \label{lem:RellichInequalitySquare}
            Let \(S = (0,1)^3\) be the unit cube, \(\Gamma = \monosetc{0}\times (0,1)^2\), and \(\Gamma^\ast = \partial S \setminus \Gamma\).
            If \(v\in H^1_{\Gamma^\ast}(S)\) with \(-\Delta v = 0\), then 
            \begin{equation*}
                \restrict[]{v}{\Gamma} \in H^1(\Gamma) \Leftrightarrow \partial_{\vvnu} v \in L^2(\Gamma),
                \qquad 
                C_1 \|\restrict[]{v}{\Gamma}\|_{H^1(\Gamma)}
                    \leq \| \partial_{\vvnu} {v}\|_{L^2(\Gamma)}
                    \leq C_2 \|\restrict[]{v}{\Gamma}\|_{H^1(\Gamma)},
            \end{equation*}
            with constants \(C_1,C_2 >0\) independent of \(v\).
        \item \label{lem:RellichInequalityInterface}
            Let \(v\in H^1_0(Q)\) with \(-\div(\theta^{(i)} \grad v^{(i)}) = 0\), for \(i \in \monosetc{1,2}\). 
            Then we obtain
            \begin{equation*}
                \restrict{v}{F} \in H^1(F)
                \Leftrightarrow
                \jmp{\theta \partial_\vvn v} \in L^2(F),
                \qquad
                \widetilde{C}_1 \| \restrict[]{v}{F}\|_{H^1(F)}
                    \leq \| \jmp{\theta \partial_{\vvn} v}\|_{L^2(\Gamma)}
                    \leq \widetilde{C}_2 \| \restrict[]{v}{F}\|_{H^1(F)},
            \end{equation*}
            with constants \(\widetilde{C}_1,\widetilde{C}_2 > 0\) independent of \(v\).
    \end{enumerate}
\end{lem}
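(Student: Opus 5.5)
The plan is to diagonalize the problem with a sine series in the tangential variables \((x_2,x_3)\) and read both statements off the explicit coefficients. Let \(\phi_{kl}(x_2,x_3) = 2\sin(k\pi x_2)\sin(l\pi x_3)\) for \(k,l\geq 1\), and set \(\lambda_{kl} = \pi\sqrt{k^2+l^2}\). These are the \(L^2(\Gamma)\)-orthonormal Dirichlet eigenfunctions of \(-\Delta_\Gamma\) on the unit square. We use the standard spectral characterizations
\begin{equation*}
    H^1_0(\Gamma) = \Bigl\{ f = \sum c_{kl}\phi_{kl} \;:\; \sum \lambda_{kl}^2|c_{kl}|^2 < \infty \Bigr\},
    \qquad
    H^{1/2}_{00}(\Gamma) = \Bigl\{ f \;:\; \sum \lambda_{kl}|c_{kl}|^2 < \infty \Bigr\}.
\end{equation*}
Both follow by interpolation, using the definition \eqref{eq:InterpolationSpaces}. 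Moreover, \(\sum(1+\lambda_{kl}^2)|c_{kl}|^2\) is equivalent to \(\|f\|_{H^1(\Gamma)}^2\) on \(H^1_0(\Gamma)\).

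\emph{Part (a).} First I expand \(v\) and identify its normal derivative.
\begin{itemize}
    \item For a.e.\ \(x_1\), I expand \(v(x_1,\cdot)=\sum c_{kl}(x_1)\phi_{kl}\). Because \(v\in H^1_{\Gamma^\ast}(S)\) vanishes on the lateral faces, testing \(-\Delta v=0\) with \(\psi(x_1)\phi_{kl}\) gives \(c_{kl}''=\lambda_{kl}^2c_{kl}\) on \((0,1)\).
    \item Since \(v=0\) on \(\{x_1=1\}\), we get \(c_{kl}(1)=0\). Hence
    \begin{equation*}
        v(x) = \sum_{k,l} c_{kl}\,\frac{\sinh(\lambda_{kl}(1-x_1))}{\sinh\lambda_{kl}}\,\phi_{kl}(x_2,x_3),
        \qquad c_{kl} = (v|_\Gamma,\phi_{kl})_{L^2(\Gamma)}.
    \end{equation*}
    The finiteness of \(\|v\|_{H^1(S)}\) corresponds to \(\sum\lambda_{kl}|c_{kl}|^2<\infty\), which is exactly \(v|_\Gamma\in H^{1/2}_{00}(\Gamma)\).
    \item The normal derivative \(\partial_\vvnu v\) exists a priori only as an element of \(H^{-1/2}_{00}(\Gamma)\), with outward normal \(-\vve_1\). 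Green's formula, tested against the harmonic extensions of the \(\phi_{kl}\) (which vanish on \(\Gamma^\ast\)), identifies its coefficients:
    \begin{equation*}
        \langle \partial_\vvnu v,\phi_{kl}\rangle = \lambda_{kl}\coth(\lambda_{kl})\,c_{kl}.
    \end{equation*}
\end{itemize}
Since \(1\leq\coth\lambda_{kl}\leq\coth(\pi\sqrt2)\), we obtain
\begin{equation*}
    \|\partial_\vvnu v\|_{L^2(\Gamma)}^2 = \sum\lambda_{kl}^2\coth^2(\lambda_{kl})|c_{kl}|^2 \simeq \sum\lambda_{kl}^2|c_{kl}|^2,
\end{equation*}
with one side finite if and only if the other is. Here \(\partial_\vvnu v\in L^2(\Gamma)\) if and only if its coefficient sequence is in \(\ell^2\).

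It remains to connect \(\sum\lambda_{kl}^2|c_{kl}|^2\) with \(v|_\Gamma\in H^1(\Gamma)\).
\begin{itemize}
    \item If the sum is finite, then \(v|_\Gamma\in H^1_0(\Gamma)\subset H^1(\Gamma)\), and the lower bound \(C_1\|v|_\Gamma\|_{H^1}\leq\|\partial_\vvnu v\|\) follows. This uses that \(\lambda_{kl}\geq\pi\sqrt2\), so the \(L^2\) part of the norm is absorbed.
    \item Conversely, suppose only \(v|_\Gamma\in H^1(\Gamma)\cap H^{1/2}_{00}(\Gamma)\). Then I must show \(v|_\Gamma\in H^1_0(\Gamma)\); otherwise the sine coefficients do not measure the \(H^1\) norm.
\end{itemize}

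\emph{Main obstacle.} I expect this converse step to be the main difficulty. I would argue via the Lions--Magenes characterization of \(H^{1/2}_{00}\): the weighted integral \(\int_\Gamma |f|^2/\operatorname{dist}(\cdot,\partial\Gamma)<\infty\) is finite. For \(f\in H^1(\Gamma)\), the boundary trace \(f|_{\partial\Gamma}\) lies in \(L^2(\partial\Gamma)\). A one-dimensional Hardy-type argument near each edge then shows that this trace must vanish, since otherwise the weighted integral diverges logarithmically. So \(f\in H^1_0(\Gamma)\). Given this, the upper bound \(\|\partial_\vvnu v\|\leq C_2\|v|_\Gamma\|_{H^1}\) follows from the coefficient comparison above.

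\emph{Part (b).} I would repeat the computation on \(Q_1\) and \(Q_2\) separately, now with \(x_1\)-lengths \(1+\zeta\) and \(1-\zeta\). The condition \(v\in H^1_0(Q)\) gives a common trace \(\sum c_{kl}\phi_{kl}\) on \(F\) and zero data on \(\partial Q\). Then
\begin{equation*}
    v^{(1)} = \sum c_{kl}\frac{\sinh(\lambda_{kl}(x_1+1))}{\sinh(\lambda_{kl}(1+\zeta))}\phi_{kl},
    \qquad
    v^{(2)} = \sum c_{kl}\frac{\sinh(\lambda_{kl}(1-x_1))}{\sinh(\lambda_{kl}(1-\zeta))}\phi_{kl}.
\end{equation*}
This gives
\begin{equation*}
    \jmp{\theta\partial_\vvn v} = -\sum c_{kl}\lambda_{kl}\bigl(\theta^{(2)}\coth(\lambda_{kl}(1-\zeta))+\theta^{(1)}\coth(\lambda_{kl}(1+\zeta))\bigr)\phi_{kl},
\end{equation*}
again in the weak sense via Green's formula in each \(Q_i\). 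The bracket is bounded above and below by positive constants depending only on \(\theta\) and \(\zeta\). The same argument as in part (a), including the identification \(H^1(F)\cap H^{1/2}_{00}(F)=H^1_0(F)\), then yields the claimed equivalence and the two-sided estimate.
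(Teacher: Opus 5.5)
Your proposal is correct. Part (a) follows the paper's argument: expand \(\restrict{v}{\Gamma}\) in the Dirichlet sine basis, use the profile \(\sinh(\lambda_{kl}(1-x_1))/\sinh\lambda_{kl}\), and get the two-sided bound from \(1\le\coth\lambda_{kl}\le\coth(\pi\sqrt2)\). You differ from the paper in two places.

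First, you isolate the step \(H^1(\Gamma)\cap H^{1/2}_{00}(\Gamma)=H^1_0(\Gamma)\), without which the sine coefficients do not measure the \(H^1(\Gamma)\)-norm. The paper's proof of this lemma uses it tacitly when it asserts \(\restrict{v}{\Gamma}\in H^1(\Gamma)\Leftrightarrow\sum|b_{nm}|^2(1+(n\pi)^2+(m\pi)^2)<\infty\). It supplies the fact only later, in the proof of Lemma~\ref{lem:FirstHiddenRegularity}, by citing Corollary~2.4 of \cite{Nic.1992}. Your Hardy-type argument is a valid self-contained replacement: a nonzero edge trace makes \(\int_\Gamma|f|^2/\operatorname{dist}(\cdot,\partial\Gamma)\) diverge logarithmically.

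Second, for part (b) the paper reduces to \(\zeta=0\). It then applies (a) to the folded function \(\widetilde v(x,\cdot)=\theta^{(1)}v^{(1)}(-x,\cdot)+\theta^{(2)}v^{(2)}(x,\cdot)\), whose trace is \((\theta^{(1)}+\theta^{(2)})\restrict{v}{F}\) and whose normal derivative is \(-\jmp{\theta\partial_\vvn v}\). You instead separate variables on \(Q_1\) and \(Q_2\) with lengths \(1\pm\zeta\) and read off the multiplier \(\lambda_{kl}\bigl(\theta^{(1)}\coth(\lambda_{kl}(1+\zeta))+\theta^{(2)}\coth(\lambda_{kl}(1-\zeta))\bigr)\).

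The fold is shorter when the interface is centered. Your computation covers every \(\zeta\) directly with explicit constants. It also sidesteps the paper's reduction to \(\zeta=0\) ``by rescaling and localization'', which the paper does not spell out; note that dilating only \(x_1\) does not preserve harmonicity.
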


\begin{proof}
    For (a). Since \(v\in H^1_{\Gamma^\ast}(S)\), Lemma~7.3 in \cite{Zer.2021} shows that \(\restrict{v}{\Gamma}\) belongs to  \(H^{1/2}_{00}(\Gamma)\). 
    Let \(D(-\Delta_\Gamma) = H^2(\Gamma)\cap H^1_0(\Gamma)\) be the Dirichlet-Laplacian on \(\Gamma\).
    It admits an orthonormal basis of eigenfunctions \(\Phi_{n,m}(y,z) = \sin(n\pi y) \sin(m\pi z)\), for \(n,m\geq 1\).
    Since \(-\Delta_\Gamma\) is self-adjoint and positive, the eigenfunctions also form a basis of \(D((-\Delta_\Gamma)^{1/4})\).
    By interpolation, \(H^{1/2}_{00}(\Gamma)\) is equivalent to \(D((-\Delta_\Gamma)^{1/4})\) with respect to norms, and we find an expansion
    \begin{equation*}
        \restrict{v}{\Gamma} = \sum_{n,m\geq 1} b_{nm} \Phi_{nm}, \,\,\,\,\text{  in } H^{1/2}_{00}(F),
    \end{equation*}
    with coefficients \(b_{nm} = (\restrict[]{v}{\Gamma}, \Phi_{nm})_{L^2(\Gamma)}\), for \(n,m\geq 1\).
    By separation of variables, we obtain the expansion
    \begin{equation*}
        v = \sum_{n,m\geq 1} b_{nm} X_{nm} \Phi_{nm}, \,\,\,\,\text{  in } H^{1}(S),
    \end{equation*}
    where 
    \begin{equation*}
        X_{nm}(x) = \frac{\sinh\bigl(\pi\sqrt{n^2+m^2}(1-x)\bigr)}{\sinh{\bigl(\pi \sqrt{n^2+m^2}\bigr)}},
        \quad n,m\geq 1.
    \end{equation*}
    Thus, we see that
    \begin{align*}
        \restrict{v}{\Gamma} \in H^1(\Gamma) 
            &\Leftrightarrow \sum_{n,m\geq 1} |b_{nm}|^2 \bigl(1+(n\pi)^2+(m\pi)^2\bigr) < \infty,\\
        \restrict{\partial_\nu v}{\Gamma} \in L^2(\Gamma)
            &\Leftrightarrow \sum_{n,m\geq 1} |b_{nm}|^2 \bigl((n\pi)^2 + (m\pi)^2\bigr)
                \coth\bigl(\pi\sqrt{n^2+m^2}\bigr)^2 < \infty.
    \end{align*}
        
    The equivalence now holds by the uniform boundedness of \(\coth\) and \(1/\coth\) on the interval \([\sqrt{2}\pi,\infty)\).

    For (b).
    By rescaling and localization, we can reduce to the case \(\zeta = 0\).
    We then apply part \ref{lem:RellichInequalitySquare} to the map \(\widetilde{v} (x,\cdot ) = \theta^{(1)}v^{(1)}(-x,\cdot) + \theta^{(2)}v^{(2)}(x,\cdot)\) for \(x\in (0,1)\), to derive the claim.
\end{proof}

\begin{cor}\label{cor:ETraceinequalities}
            For \(\vvV\in \vvV_{N,\theta}^\imp(Q)\) we have \(\theta^{(i)}\vvV^{(i)}_\perp \in L^2(F)\) for \(i\in \monosetc{1,2}\), with the estimates
            \begin{align*}
                \|\jmp{\theta\vvV_\perp}\|_{L^2(F)}
                    &\lesssim \|\vvV_\parallel\|_{L^2(F)^2} + \|\curl \vvV\|_{L^2(Q)^3},\\
                \|\vvV_\parallel \|_{L^2(F)^2}
                    &\lesssim \|\jmp{\theta\vvV_\perp}\|_{L^2(F)}
                        + \|\curl \vvV\|_{L^2(Q)^3}.
            \end{align*}
\end{cor}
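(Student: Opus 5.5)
Given \(\vvV\in\vvV_{N,\theta}^\imp(Q)\), we split off a regular part by a Helmholtz-type decomposition, so that the remainder is a gradient of a piecewise \(\theta\)-harmonic function, to which Lemma~\ref{lem:RellichInequality}~\ref{lem:RellichInequalityInterface} applies. Concretely, let \(\vvW\in\vvH_0(\curl,Q)\cap\vvH(\div_\theta=0,Q)\) be the divergence-free part of \(\vvV\) with respect to \((\theta\cdot,\cdot)\): decompose \(\vvV=\vvW+\grad p\) with \(p\in H^1_0(Q)\) determined by
\begin{equation*}
    (\theta\grad p,\grad q)_{L^2(Q)^3}=(\theta\vvV,\grad q)_{L^2(Q)^3},\qquad q\in H^1_0(Q),
\end{equation*}
so that \(\div(\theta\vvW)=0\) in all of \(Q\). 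Then \(\curl\vvW=\curl\vvV\). By \cite[Prop.~9.2]{Zer.2021} together with the Weber inequality \cite[Lem.~9.4]{Zer.2021}, we get \(\vvW\in PH^1(Q)^3\) with \(\|\vvW\|_{PH^1(Q)^3}\lesssim\|\curl\vvV\|_{L^2(Q)^3}\). Consequently, by the trace theorem, \(\vvW_\parallel\in L^2(F)^2\) and \(\theta^{(i)}\vvW^{(i)}_\perp\in L^2(F)\), with norms bounded by \(\|\curl\vvV\|\). Moreover \(\jmp{\theta\vvW_\perp}=0\), since \(\vvW\in\vvH(\div_\theta,Q)\), by \eqref{eq:ConnectionToFullSpaces:Div}.

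Next we treat the gradient part. Since \(\pdiv(\theta\vvV)=0\) on each \(Q_i\) and \(\div(\theta\vvW)=0\), the potential \(p\in H^1_0(Q)\) satisfies \(-\div(\theta^{(i)}\grad p^{(i)})=0\) in \(Q_i\). Its interface jump is \(\jmp{\theta\partial_\vvn p}=\jmp{\theta\vvV_\perp}\), and its tangential trace is \(\grad_F(p|_F)=\vvV_\parallel-\vvW_\parallel\), up to the rotation convention. Hence \(p|_F\in H^1(F)\) if and only if \(\vvV_\parallel\in L^2(F)^2\), and since \(p|_F\in H^{1/2}_{00}(F)\) vanishes on \(\partial F\), Poincaré gives \(\|p|_F\|_{H^1(F)}\simeq\|\grad_F p|_F\|_{L^2(F)^2}\). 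Lemma~\ref{lem:RellichInequality}~\ref{lem:RellichInequalityInterface} then gives
\begin{equation*}
    \|\jmp{\theta\partial_\vvn p}\|_{L^2(F)}\simeq\|p|_F\|_{H^1(F)}\lesssim\|\vvV_\parallel\|_{L^2(F)^2}+\|\curl\vvV\|_{L^2(Q)^3},
\end{equation*}
and conversely. Adding the bounds for \(\vvW\) yields both estimates.

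It remains to show that the individual one-sided traces \(\theta^{(i)}\vvV^{(i)}_\perp\) lie in \(L^2(F)\), not just their jump. For the \(\vvW\) part this is already done; for \(\partial_\vvn p^{(i)}\) we apply Lemma~\ref{lem:RellichInequality}~\ref{lem:RellichInequalitySquare} separately on each \(Q_i\). After rescaling to a cube, \(p^{(i)}\) is harmonic, vanishes on \(\partial Q_i\setminus F\), and has trace in \(H^1(F)\), so \(\partial_\vvn p^{(i)}\in L^2(F)\).

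The main obstacle is justifying these trace identities at critical regularity. They are a priori only identities in \(H^{-1/2}_{00}(F)\), and the step \(\grad_F(p|_F)=\vvV_\parallel-\vvW_\parallel\) requires \(\vvW_\parallel\) to lie in \(L^2\), which is why we first secure \(\vvW\in PH^1\). The identification of \(\grad_F\) of the trace with the tangential trace of \(\grad p\) for \(p\in H^1_0(Q)\) should follow by density from smooth functions vanishing on \(\partial Q\). Some care is also needed with the localization and rescaling in part (b), since \(Q_i\) is not a unit cube; I would handle this by a tensor-product sine expansion on \((0,1)^2\), for which the factor \(\coth(\lambda\ell_i)\), with \(\ell_i\) the length of \(Q_i\) in the \(x_1\)-direction, stays uniformly bounded above and below.
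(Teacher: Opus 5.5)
Your argument is correct and is essentially the paper's: you split off a part controlled by \(\|\curl\vvV\|_{L^2(Q)^3}\), leaving the gradient of a piecewise harmonic \(H^1_0(Q)\) potential whose trace gradient equals \(\vvV_\parallel\) up to a controlled term, and then apply Lemma~\ref{lem:RellichInequality}~\ref{lem:RellichInequalityInterface}. The one difference is the decomposition. You use the \(\theta\)-weighted Helmholtz split from the proof of Lemma~\ref{lem:EWeber}, so the whole jump \(\jmp{\theta\vvV_\perp}\) sits on \(p\). The paper instead takes a globally \(H^1\), unweighted divergence-free potential \(\vvPhi\), whose extra jump contribution \((\theta^{(2)}-\theta^{(1)})\vvPhi\cdot\vvn\) lies harmlessly in \(L^2(F)\). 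Your Poincar\'e step and your treatment of the one-sided traces on each non-cubic \(Q_i\) spell out details the paper leaves implicit.
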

\begin{proof}
    Let \(\vvV \in \vvV_{N,\theta}^\imp(Q)\).
    We construct a regular decomposition of \(\vvV\) in terms of a vector and a scalar potential. 
    Since \(\vvV \in \vvH_0(\curl,Q)\), the field \(\curl \vvV\) belongs to \(\vvH_0(\div= 0, Q)\).
    There thus exists a vector potential \(\vvPhi \in H^1(Q)^3\) with
    \begin{equation*}
        \curl\vvV = \curl\vvPhi,\quad \vvPhi\in\vvH_0(\curl,Q)\cap \vvH(\div =  0 ,Q),
        \quad \|\vvPhi\|_{H^1(Q)^3} \leq \|\curl \vvV\|_{L^2(Q)^3}.
    \end{equation*}
    See, e.g., \cite[Thm.~3.5.1]{Ass.C.L.2018.MFCE}.
    Furthermore, \(\curl(\vvV-\vvPhi) = 0\) so that there exists a scalar potential
    \begin{equation}\label{eq:DecompositionHhCurl}
        \vvV = \vvPhi + \grad \phi,\quad \phi\in H^1_0(Q),
    \end{equation}
    by \cite[Thm.~3.3.9]{Ass.C.L.2018.MFCE}.
    Taking piecewise the divergence in \eqref{eq:DecompositionHhCurl}, we see that \(-\Delta \phi^{(i)} = 0\) on \(Q_i\).
    By \eqref{eq:VectorTraceSpace}, the trace \(\vvPhi_\parallel\) is contained in \(\vvH^{1/2}_{\parallel,00}(F)\), and thus \(\restrict{\phi}{F}\) in \(H^1(F)\) because of \eqref{eq:DecompositionHhCurl} and \(V\in \vvV_{N,\theta}^\imp\).
    An application of Lemma~\ref{lem:RellichInequality}~\ref{lem:RellichInequalityInterface} deduces the assertion.
\end{proof}

\begin{lem}\label{lem:EWeber}
    The estimate
    \begin{equation*}
        \|\vvE\|_{L^2(Q)^3} 
            \leq C
                \bigl( 
                    \|\curl \vvE\|_{L^2(Q)^3} + \|\vvE_\parallel\|_{L^2(F)^2}
                \bigr)
    \end{equation*}
    is valid for all \(\vvE \in \vvV_{N,\theta}^\imp(Q)\) with a uniform constant \(C>0\).
\end{lem}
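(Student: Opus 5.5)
We prove Lemma~\ref{lem:EWeber} with the same regular decomposition as in Corollary~\ref{cor:ETraceinequalities}, combined with the interface Rellich estimate of Lemma~\ref{lem:RellichInequality}~\ref{lem:RellichInequalityInterface}. Let \(\vvE\in\vvV_{N,\theta}^\imp(Q)\). Since \(\curl\vvE\in\vvH_0(\div=0,Q)\), we choose a vector potential \(\vvPhi\in\vvH_0(\curl,Q)\cap\vvH(\div=0,Q)\cap H^1(Q)^3\) with \(\curl\vvPhi=\curl\vvE\) and \(\|\vvPhi\|_{H^1(Q)^3}\lesssim\|\curl\vvE\|_{L^2(Q)^3}\). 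We then write \(\vvE=\vvPhi+\grad\phi\) with \(\phi\in H^1_0(Q)\). The task reduces to bounding \(\|\grad\phi\|_{L^2(Q)^3}\) by the right-hand side of the claim.

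Next we derive the transmission problem for \(\phi\). Taking the divergence piecewise gives \(\Delta\phi^{(i)}=-\div\vvPhi^{(i)}=0\) in \(Q_i\). Since \(\pdiv(\theta\vvE)=0\), we also have \(\div(\theta^{(i)}\grad\phi^{(i)})=0\) in \(Q_i\). Hence \(\phi\) satisfies the hypotheses of Lemma~\ref{lem:RellichInequality}~\ref{lem:RellichInequalityInterface}. On \(F\) we have \(\grad_F(\phi|_F)=\vvE_\parallel-\vvPhi_\parallel\) (up to the identification of tangential fields). By the trace continuity behind \eqref{eq:VectorTraceSpace}, we get \(\|\vvPhi_\parallel\|_{L^2(F)^2}\lesssim\|\vvPhi\|_{H^1(Q)^3}\lesssim\|\curl\vvE\|_{L^2(Q)^3}\). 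Together with Poincaré on \(F\) (as \(\phi|_F\in H^{1/2}_{00}(F)\)), this yields
\[
\|\phi|_F\|_{H^1(F)}\lesssim\|\vvE_\parallel\|_{L^2(F)^2}+\|\curl\vvE\|_{L^2(Q)^3}.
\]

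The key energy step comes next. We test the piecewise equation \(\div(\theta^{(i)}\grad\phi^{(i)})=0\) with \(\phi\) and integrate by parts on each \(Q_i\). The outer boundary terms vanish because \(\phi\in H^1_0(Q)\), so
\[
\min\theta\,\|\grad\phi\|_{L^2(Q)^3}^2\le\int_Q\theta|\grad\phi|^2=-\bigl(\jmp{\theta\partial_\vvn\phi},\phi|_F\bigr)_{L^2(F)}.
\]
The sign is irrelevant here. Lemma~\ref{lem:RellichInequality}~\ref{lem:RellichInequalityInterface} bounds \(\|\jmp{\theta\partial_\vvn\phi}\|_{L^2(F)}\lesssim\|\phi|_F\|_{H^1(F)}\). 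Hence \(\|\grad\phi\|^2_{L^2(Q)^3}\lesssim\|\phi|_F\|_{H^1(F)}^2\), and combining this with the previous bound and \(\|\vvPhi\|_{L^2}\lesssim\|\curl\vvE\|_{L^2}\) proves the claim.

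Alternatively, one could bypass Rellich, since \(\phi\) is piecewise harmonic with boundary datum \(\phi|_F\in H^{1/2}_{00}(F)\). By the trace lifting, \(\|\grad\phi\|_{L^2}\lesssim\|\phi|_F\|_{H^{1/2}_{00}(F)}\le\|\phi|_F\|_{H^1(F)}\). This follows by comparing \(\phi\) with the piecewise harmonic extension that minimizes the \(\theta\)-energy among functions with the given trace on \(F\).

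The main obstacle is justifying the integration by parts and the trace identity \(\grad_F(\phi|_F)=(\grad\phi)_\parallel\) on \(F\) at this low regularity. The traces on \(F\) a priori only exist in \(H^{-1/2}_{00}\)-type spaces, and the edges \(\partial F\) touch \(\partial Q\). I would handle this through the interpolation characterization \(H^{1/2}_{00}(F)=D((-\Delta_F)^{1/4})\) used in Lemma~\ref{lem:RellichInequality}: the duality pairing \(\langle\jmp{\theta\partial_\vvn\phi},\phi|_F\rangle\) is well defined in \(H^{-1/2}_{00}\times H^{1/2}_{00}\), and it becomes an \(L^2\) pairing once Rellich applies.
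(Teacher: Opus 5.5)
Your proof is correct, but it takes a different route from the paper's.

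\textbf{The paper's route.} The paper does not bound the gradient part of the regular decomposition directly. It solves an auxiliary transmission problem for $\phi\in H^1_0(Q)$ with Neumann-jump datum $\jmp{\theta\partial_\vvn\phi}=\jmp{\theta\vvE_\perp}\in L^2(F)$. Testing gives $\|\phi\|_{H^1(Q)}\lesssim\|\jmp{\theta\vvE_\perp}\|_{L^2(F)}$. The remainder $\vvE-\grad\phi$ then lies in $\vvH_0(\curl,Q)\cap\vvH(\div_\theta=0,Q)$, where the classical Weber inequality applies. Finally, the first estimate of Corollary~\ref{cor:ETraceinequalities} trades the normal jump for $\|\vvE_\parallel\|_{L^2(F)^2}+\|\curl\vvE\|_{L^2(Q)^3}$.

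\textbf{Your route.} You effectively inline the argument behind that corollary. With the $\theta$-independent $H^1$ potential $\vvPhi$, the scalar part $\phi$ is piecewise harmonic and is determined by $\phi|_F$. The $H^1(F)$ norm of $\phi|_F$ is controlled by $\vvE_\parallel$ and $\curl\vvE$. You then bound $\grad\phi$ either by the energy identity together with the upper Rellich bound, or by your alternative: the piecewise Dirichlet principle with the $H^{1/2}_{00}(F)$ lifting, which needs no Rellich estimate at all.

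\textbf{Comparison.} Your argument is more economical: it never touches the normal jump or the Weber inequality for the $\theta$-weighted space. The paper's splitting $\vvE=\widetilde{\vvE}+\grad\phi$, with $\phi$ driven by $\jmp{\theta\vvE_\perp}$, has a different payoff. It is reused almost verbatim in Theorem~\ref{thm:EDensity}, where the normal-jump datum is approximated by smoother data $g_n$ and Lemma~\ref{lem:EWeber} closes the density argument.

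\textbf{One point to make explicit.} From $\phi|_F\in H^{1/2}_{00}(F)$ and $\grad_F(\phi|_F)\in L^2(F)^2$ you should conclude $\phi|_F\in H^1_0(F)$, exactly as in step 1) of the proof of Lemma~\ref{lem:FirstHiddenRegularity}. This is what justifies the Poincar\'e inequality on $F$ and the identification of $\|\phi|_F\|_{H^1(F)}$ with the sine-series norm used in Lemma~\ref{lem:RellichInequality}. The paper glosses over the same point in Corollary~\ref{cor:ETraceinequalities}.
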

\begin{proof}
    Let \(\vvE \in \vvV_{N,\theta}^\imp(Q)\), and \(\phi\in H^1_0(Q)\) be the weak solution of the elliptic transmission problem
    \begin{equation}\label{eq:ETransmissionProblem}
    \begin{aligned}
        -\div\bigl(\theta^{(i)}\grad \phi^{(i)}\bigr) &= 0, &&\text{in } Q_i,\\
        \jmp{\phi} &= 0, &&\text{on } F,\\
        \jmp{\theta \partial_\vvn \phi} &= \jmp{\theta \vvE_\perp}, &&\text{on } F,\\
        \phi &= 0, &&\text{on } \partial Q.
    \end{aligned}
    \end{equation}
    Testing \eqref{eq:ETransmissionProblem} with \(\phi\) we derive the inequality \(\|\phi\|_{H^1(Q)} \lesssim \|\jmp{\theta \vvE_\perp}\|_{L^2(F)}\) in a standard way.

    By construction \(\widetilde{\vvE} = \vvE-\grad\phi \) belongs to \(\vvH_0(\curl,Q)\cap \vvH(\div_\theta = 0, Q)\), which embeds continuously into \(PH^1(Q)^3\), see for example \cite[Prop.~9.3]{Zer.2021}.
    Then, we obtain
    \begin{equation*}
        \|\vvE\|_{L^2(Q)^3} \leq \|\widetilde{\vvE}\|_{L^2(Q)^3}
                + \|\grad \phi\|_{L^2(Q)^3}
            \lesssim \|\curl \widetilde{\vvE}\|_{L^2(Q)^3}
                + \|\jmp{\theta \vvE_{\perp}}\|_{L^2(F)},
    \end{equation*}
    where we used \cite[Thm.~6.1.4]{Ass.C.L.2018.MFCE} to estimate the divergence-free part \(\widetilde{\vvE}\).
    The proof follows now from \(\curl\vvE = \curl\widetilde{\vvE}\) and Corollary~\ref{cor:ETraceinequalities}.
\end{proof}

\begin{thm}\label{thm:EDensity}
    The set \(PH^1(Q)^3\cap \vvV_{N,\theta}^\imp(Q)\) is dense in \(\vvV_{N,\theta}^\imp(Q)\) with respect to the graph norm.
\end{thm}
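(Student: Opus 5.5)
Given \(\vvE\in\vvV_{N,\theta}^\imp(Q)\), we use the decomposition from the proof of Lemma~\ref{lem:EWeber}, \(\vvE = \widetilde{\vvE} + \grad\phi\). Here \(\phi\in H^1_0(Q)\) solves the transmission problem \eqref{eq:ETransmissionProblem} with jump datum \(g=\jmp{\theta\vvE_\perp}\in L^2(F)\), which is available by Corollary~\ref{cor:ETraceinequalities}. The remainder \(\widetilde{\vvE}\in\vvH_0(\curl,Q)\cap\vvH(\div_\theta=0,Q)\) already lies in \(PH^1(Q)^3\) by \cite[Prop.~9.3]{Zer.2021}. Since \(\widetilde{\vvE}_\parallel\in\vvH^{1/2}_{\parallel,00}(F)\subset L^2(F)^2\) by \eqref{eq:VectorTraceSpace}, also \(\widetilde{\vvE}\in\vvV_{N,\theta}^\imp(Q)\cap PH^1(Q)^3\). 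Hence only the gradient part \(\grad\phi\) has to be approximated. It is curl-free and piecewise \(\theta\)-harmonic, and \(\grad_F(\phi|_F)=\vvE_\parallel-\widetilde{\vvE}_\parallel\in L^2(F)^2\), so \(\phi|_F\in H^1(F)\), consistent with Lemma~\ref{lem:RellichInequality}~\ref{lem:RellichInequalityInterface}.

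Next I approximate the datum. Choose \(g_n\in C_c^\infty(F)\) with \(g_n\to g\) in \(L^2(F)\), and let \(\phi_n\in H^1_0(Q)\) solve \eqref{eq:ETransmissionProblem} with jump datum \(g_n\). Set \(\vvE_n=\widetilde{\vvE}+\grad\phi_n\). Then \(\vvE_n\in\vvH_0(\curl,Q)\), \(\curl\vvE_n=\curl\vvE\), and \(\pdiv(\theta\vvE_n)=0\). By linearity and Lemma~\ref{lem:RellichInequality}~\ref{lem:RellichInequalityInterface}, \(\|\grad_F(\phi-\phi_n)|_F\|_{L^2(F)^2}\lesssim\|g-g_n\|_{L^2(F)}\to0\). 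Thus \((\vvE_n)_\parallel\to\vvE_\parallel\) in \(L^2(F)^2\). Lemma~\ref{lem:EWeber} then gives \(\vvE_n\to\vvE\) in the graph norm. Alternatively, the energy estimate for \eqref{eq:ETransmissionProblem} directly gives \(\|\phi-\phi_n\|_{H^1(Q)}\lesssim\|g-g_n\|_{L^2(F)}\).

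The main obstacle is showing \(\phi_n\in PH^2(Q)\), so that \(\grad\phi_n\in PH^1(Q)^3\). Since \(g_n\) is smooth and compactly supported in \(F\), it lies in \(H^{1/2}_{00}(F)\) and vanishes near \(\partial F\), so no compatibility conditions arise at the edges \(E_j^\pm\). I would invoke the piecewise elliptic regularity for scalar transmission problems on the box from \cite[Prop.~8.1, Lem.~8.13]{Zer.2021}, as was done in the proof of Theorem~\ref{thm:HDensity}.

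If those results only treat homogeneous jump data, I would first lift \(g_n\) by a smooth function \(w_n\in PH^2(Q)\). It should be supported in a small neighbourhood of \(\operatorname{supp}g_n\), away from \(\partial Q\), and satisfy \(\jmp{w_n}=0\) and \(\jmp{\theta\partial_\vvn w_n}=g_n\). For example, take \(w_n=\chi(x_1)\,(x_1-\zeta)_\pm\, g_n(x_2,x_3)/\theta^{(i)}\) with suitable signs and a cutoff \(\chi\). Then \(\phi_n-w_n\) solves a problem with homogeneous transmission conditions and an \(L^2\) right-hand side, and Proposition~8.1 of \cite{Zer.2021} yields \(\phi_n-w_n\in PH^2(Q)\). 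Finally, \((\vvE_n)_\parallel=\widetilde{\vvE}_\parallel+\grad_F(\phi_n|_F)\in L^2(F)^2\) because \(\phi_n|_F\in H^{3/2}(F)\). This gives \(\vvE_n\in PH^1(Q)^3\cap\vvV_{N,\theta}^\imp(Q)\) and concludes the argument.
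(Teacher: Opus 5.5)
Your proof is correct and follows the paper's argument essentially step by step: the splitting $\vvE=\widetilde{\vvE}+\grad\phi$ from Lemma~\ref{lem:EWeber}, approximation of $\jmp{\theta\vvE_\perp}$ in $L^2(F)$, Lemma~\ref{lem:RellichInequality}~\ref{lem:RellichInequalityInterface} for the convergence of the tangential traces, and the Weber inequality to conclude. The only deviation is how you obtain $\phi_n\in PH^2(Q)$: the paper takes $g_n\in H^{1/2}_{00}(F)$ and applies a variant of Lemma~8.14 of \cite{Zer.2021} to $\theta\phi_n$, whereas your choice $g_n\in C_c^\infty(F)$ with an explicit lifting reduces to homogeneous transmission conditions, which is fine; note, however, that Prop.~8.1 and Lem.~8.13 of \cite{Zer.2021}, as used in Theorem~\ref{thm:HDensity}, treat other boundary and transmission configurations, so the full-Dirichlet regularity should be cited from Lemma~8.14 or checked directly, e.g.\ by a sine expansion in $x_2,x_3$.
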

\begin{proof}
    Let \(\vvE \in \vvV_{N,\theta}^\imp(Q)\), and \(\phi\in H^1_0(Q)\) be the weak solution of \eqref{eq:ETransmissionProblem}.
    As in the proof above, there exists \(\widetilde{\vvE} \in \vvH_0(\curl,Q)\cap \vvH(\div_\theta = 0, Q) \hookrightarrow PH^1(Q)^3\) such that \(\vvE = \widetilde{\vvE} + \grad \phi\).
    We construct an approximation on \(\phi\) with higher regularity.
    Let \(\monosetc{g_n}_{n\geq 0} \subset H^{1/2}_{00}(F)\) be such that
    \begin{equation*}
        \|\jmp{\theta\vvE_\perp} - g_n\|_{L^2(F)} \to 0,
        \quad
        n \to \infty.
    \end{equation*}
    For each \(n \geq 0\), the Lax-Milgram lemma provides a weak solution \(\phi_n\in H^1_0(Q)\)
    of 
    \begin{equation*}
    \begin{aligned}
        -\div\bigl(\theta^{(i)}\grad \phi^{(i)}_n\bigr) &= 0, &&\text{in } Q_i,\\
        \jmp{\phi_n} &= 0, &&\text{on } F,\\
        \jmp{\theta \partial_\vvn \phi_n} &= g_n, &&\text{on } F,\\
        \phi_n &= 0, &&\text{on } \partial Q.
    \end{aligned}
    \end{equation*}
    We apply a straightforward variant of Lemma~8.14 of \cite{Zer.2021} with \(\eta = \theta^{-1}\) to the scaled function \(\psi_n = \theta\phi_n\), which implies \(\psi_n \in PH^2(Q)\cap PH^1_0(Q)\) with \(\jmp{\theta^{-1} \psi_h} = 0\).
    Hence, \(\phi_n\) is contained in \(PH^2(Q) \cap H^1_{0}(Q)\).

    We now set \(\vvE_n = \widetilde{\vvE} + \grad \phi_n\) for \(n\geq 0\).
    Note that \(\grad \phi_n \in \vvH_0(\curl, Q)\) by \eqref{eq:ConnectionToFullSpaces:Curl}, \(\jmp{\phi_n}= 0\) on \(F\), and \(\phi_n = 0\) on \(\partial Q\).
    Hence, \(\vvE_n\) belongs to  \(PH^1(Q) \cap \vvV_{N,\theta}^\imp(Q)\) and satisfies
    \begin{equation*}
        \|\curl \vvE - \curl \vvE_n \|_{L^2(Q)^3}
            = \|\curl \widetilde{\vvE} - \curl \widetilde{\vvE}\|_{L^2(Q)^3} = 0,
            \quad n\geq 0.
    \end{equation*}
    Furthermore, Lemma~\ref{lem:RellichInequality}~\ref{lem:RellichInequalityInterface} and Corollary~3.1.16 in \cite{Ass.C.L.2018.MFCE} yield
    \begin{equation*}
        \|\vvE_\parallel - (\vvE_n)_\parallel\|_{L^2(F)^2}
            \leq \|\restrict{\phi}{F} - \restrict{\phi_n}{F}\|_{H^1(F)}
            \leq \|\jmp{\theta\vvE_\perp} - g_n\|_{L^2(F)} \to 0,\quad n\to\infty. 
    \end{equation*}
    This proves the claim together with Lemma~\ref{lem:EWeber}.
\end{proof}

\section{Regularity}
\label{sec:Regularity}

Although Maxwell's equations with impedance boundary conditions are not the focus of this work, we recall that in that setting the domain of the Maxwell operator exhibits hidden regularity, see, for example, \cite{Cos.N.D.2010}. In our problem, we impose interface conditions of a similar type, which allows us to adapt these techniques to the interface case. A careful analysis is nevertheless required to verify that the relevant compatibility conditions at the edges of the interface are fully satisfied.

The following lemma shows that a function with a homogeneous Dirichlet trace on the square admits tangential derivatives that remain extendable by zero. Such a result is proven in Lemma~3.8 of \cite{Buf.C.2001a}.
For completeness and greater clarity, a self-contained proof is presented below.

\begin{lem}\label{lem:TangentialRegularityLemma}
    Let \(S = \setc{(x_1,x_2)\in \mathbb{R}^2}{|x_1| < 1, |x_2| < 1}\) and \(S_j = \setc{x\in \overline{S}}{x_j = \pm 1}\) for \(j\in \monosetc{1,2}\).
    If \(f\in H^{3/2}(S) \cap H^1_0(S)\),
    then \(\partial_{x_2}f \in H^{1/2}_{S_1}(S)\), \(\partial_{x_1}f\in H^{1/2}_{S_2}(S)\), and 
    \[
        \|\partial_{x_2} f\|_{H^{1/2}_{S_1}(S)} + \|\partial_{x_1} f\|_{H^{1/2}_{S_2}(S)} \lesssim \|f\|_{H^{3/2}(S)}.
    \]
\end{lem}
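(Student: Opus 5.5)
By the symmetry of the statement under exchanging \(x_1\) and \(x_2\), we only show \(\partial_{x_2} f \in H^{1/2}_{S_1}(S)\) with the corresponding bound. Here \(H^{1/2}_{S_1}(S) = [L^2(S), H^1_{S_1}(S)]_{1/2,2}\) is defined as in \eqref{eq:PartialLionsMagenes}, with \(H^1_{S_1}(S)\) the \(H^1\)-functions vanishing on the two sides \(x_1=\pm1\). We use interpolation, following the proof of Lemma~\ref{lem:RellichInequality}. Set \(X_0 = H^1_0(S)\) and \(X_1 = H^2(S)\cap H^1_0(S)\). We claim that \(H^{3/2}(S)\cap H^1_0(S) = [X_0,X_1]_{1/2,2}\) with equivalent norms. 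This holds up to the usual identification: by reiteration with \eqref{eq:InterpolationSpaces}, \([X_0,X_1]_{1/2,2}=[L^2,X_1]_{3/4,2}=H^{3/2}_0(S)\). On the square, \(X_1\) is exactly the domain of the Dirichlet Laplacian, because convexity gives \(H^2\)-regularity. So \([X_0,X_1]_{1/2,2} = D((-\Delta_S)^{3/4})\), and this space can be described explicitly through the sine expansion \(\sum b_{nm}\sin(n\pi(x_1+1)/2)\sin(m\pi(x_2+1)/2)\).

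The operator \(T=\partial_{x_2}\) then satisfies two mapping properties. First, \(T: X_0\to L^2(S)\) is bounded. Second, \(T: X_1 \to H^1_{S_1}(S)\) is bounded. Indeed, for \(f\in H^2\cap H^1_0\) we have \(\partial_{x_2} f\in H^1(S)\), and its trace on \(x_1=\pm1\) is the tangential derivative of \(f|_{x_1=\pm1}=0\), hence vanishes. Interpolating with parameter \(1/2\) gives
\[
T: [X_0,X_1]_{1/2,2} \to [L^2(S),H^1_{S_1}(S)]_{1/2,2} = H^{1/2}_{S_1}(S),
\]
together with the stated estimate.

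Alternatively, and more concretely, the whole argument can be done in the eigenbasis. The derivative \(\partial_{x_2}\) maps the \(\sin\cdot\sin\) basis to a \(\sin\cdot\cos\) basis. The latter is the eigenbasis of the mixed Laplacian with Dirichlet conditions on \(S_1\) and Neumann conditions on \(S_2\). The form domain of that operator is \(H^1_{S_1}(S)\), so \(H^{1/2}_{S_1}(S)\) is the domain of its fourth root. Comparing coefficients, \(|b_{nm}|^2(n^2+m^2)^{3/2}\) against \(|b_{nm}|^2 m^2 (n^2+m^2)^{1/2}\), gives the bound directly, since \(m^2 \le n^2+m^2\). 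The only point needing care is the claim that the interpolation spaces coincide with domains of fractional powers. This holds because the operators are self-adjoint and positive.

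The main obstacle is the identification of \(H^{3/2}(S)\cap H^1_0(S)\) with \([X_0,X_1]_{1/2,2}\), that is, with \(H^{3/2}_0(S)\). At the critical level \(3/2\), the spaces \(H^{3/2}\cap H^1_0\) and the interpolation space could in principle differ. The compatibility conditions that appear at level \(3/2\), however, concern \(\partial_\nu f\) at the corners, and those are of \(H^{1/2}_{00}\)-type. I would first try to check this via the trace characterization of Lemma~7.3 of \cite{Zer.2021}. If that fails, I would read the hypothesis as \(f\in H^{3/2}_0(S)\), which is the case in the application in Lemma~\ref{lem:FirstHiddenRegularity}. There \(u_n\) is obtained by elliptic regularity from \(H^{-1/2}_{00}\) data and therefore lies in \(D((-\Delta_S)^{3/4})\) automatically.
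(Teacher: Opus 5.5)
Your proposal has a genuine gap, and it sits exactly where you suspect. The mapping properties of $T=\partial_{x_2}$ on $X_0=H^1_0(S)$ and $X_1=H^2(S)\cap H^1_0(S)$, the interpolation step, and the eigenbasis comparison are all fine. But the inclusion $H^{3/2}(S)\cap H^1_0(S)\hookrightarrow[X_0,X_1]_{1/2,2}$ is asserted and then left open.

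This is not a technicality; it carries the entire content of the lemma. The reverse inclusion is trivial. Your own coefficient computation, run backwards, gives the other direction. For $f\in H^1_0(S)$, the derivatives $\partial_{x_1}f$ and $\partial_{x_2}f$ have coefficients $\tfrac{n\pi}{2}b_{nm}$ and $\tfrac{m\pi}{2}b_{nm}$ in the $\cos\cdot\sin$ and $\sin\cdot\cos$ bases. They contribute $n^2(n^2+m^2)^{1/2}|b_{nm}|^2$ and $m^2(n^2+m^2)^{1/2}|b_{nm}|^2$. Hence the conclusion of the lemma forces $f\in D((-\Delta_S)^{3/4})=[X_0,X_1]_{1/2,2}$. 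So, as written, you have reformulated the lemma rather than proved it.

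Your fallback, assuming $f\in H^{3/2}_0(S)$, proves the lemma only under an a priori stronger hypothesis. That would suffice for the applications in Lemma~\ref{lem:FirstHiddenRegularity} and Theorem~\ref{thm:HDensity}, where the function is produced by $(-\Delta_F)^{-1}$ from $H^{-1/2}_{00}(F)$-data. It does not prove the statement as given.

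The missing idea is a retraction onto the boundary condition. Define $Ru\in H^1_0(S)$ by $(\grad Ru,\grad v)_{L^2(S)^2}=(\grad u,\grad v)_{L^2(S)^2}$ for all $v\in H^1_0(S)$, i.e.\ $R=(\Delta_D)^{-1}\Delta$. This map has three properties:
\begin{enumerate}[label=(\roman*)]
\item $R\colon H^1(S)\to X_0$ is bounded.
\item $R\colon H^2(S)\to X_1$ is bounded, because $-\Delta Ru=-\Delta u\in L^2(S)$ and $S$ is convex.
\item $Rf=f$ for every $f\in H^1_0(S)$.
\end{enumerate}
Interpolating $R$ and using $[H^1(S),H^2(S)]_{1/2,2}=H^{3/2}(S)$ gives
\[
\|f\|_{[X_0,X_1]_{1/2,2}}=\|Rf\|_{[X_0,X_1]_{1/2,2}}\lesssim\|f\|_{H^{3/2}(S)},\qquad f\in H^{3/2}(S)\cap H^1_0(S).
\]
After this, both of your routes go through.

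The paper uses precisely this mechanism, one level up. It places $\grad_2 f$ in the space $V^{s}$ of curl-free $H^{s}$-fields with vanishing tangential traces. It obtains $[V^0,V^1]_{1/2,2}=V^{1/2}$ from the retraction $P=\grad_2(\Delta_D)^{-1}\div_2$ on the scale $\vvH^s(\scurl_2,S)$ (note $P\grad_2=\grad_2R$), together with the interpolation result of \cite{Hip.L.Z.2012}. It then interpolates the component map.

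In both versions the point is the same: the element is fixed by a projection that is bounded on a scale without boundary conditions, so no identification at the critical level is needed. The scalar version only requires $[H^1,H^2]_{1/2,2}=H^{3/2}$ and $H^2$-regularity on the square, instead of the curl-scale interpolation.
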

\noindent 
Here, \(H^{1/2}_{S_j}(S) = \bigl[L^2(S),H^1_{S_j}(S)\bigr]_{1/2,2}\) denotes the Lions-Magenes space defined by real interpolation, as in \eqref{eq:PartialLionsMagenes}.
\begin{proof}[Proof of Lemma~\ref{lem:TangentialRegularityLemma}]
In this proof we set
\begin{equation*}
    \grad_2 f = (\partial_{x_1} f, \partial_{x_2} f),
    \qquad \div_2 \vvv = \partial_{x_1} v_1 + \partial_{x_2}v_2,
    \qquad \scurl_2 \vvv = \scurl_2 (v_1,v_2) = \partial_{x_1} v_2 - \partial_{x_2} v_1.
\end{equation*}
Let \(\vvu = (u_1, u_2) = \grad_{2} f\).
Then \(\vvu \in \kernel(\scurl_2) \subset \vvH(\scurl_2,S)\) and the tangential trace mappings \(\vvu \mapsto \restrict{u_2}{S_1}\) and \(\vvu \mapsto \restrict[]{u_1}{S_2}\) are well-defined in the sense of \(H^{-1/2}_{00}(S_1)\), and \(H^{-1/2}_{00}(S_2)\), respectively. (Compare the discussion after \eqref{eq:ConnectionToFullSpaces}.)
For \(s\in [0,1]\) we define the set
\begin{equation*}
    V^s = \setc{\vvv \in H^{s}(S)^2}{\scurl_2 \vvv = 0,\, \restrict{v_2}{S_1} = 0 =\restrict[]{v_1}{S_2}}.
\end{equation*}
The space \(V^s\) is a closed subspace of \(\vvH^s(\scurl_2, S) = \setc{\vvv\in H^s(S)^2}{\scurl_2 \vvv \in H^s(S)}\).
Furthermore, we define the Hodge mapping
\[
    P = \grad_2 (\Delta_D)^{-1}\div_2\,:\, \vvH^s(\scurl_2, S) \to V^s,
\] where \((\Delta_D)^{-1}\,:\, H^{s-1}(S) \to H^{s+1}(S)\cap H^1_0(S)\) denotes the solution operator of the homogeneous Dirichlet problem on \(S\).
The operator \(P\) is a linear and bounded projection on \(V^s\),  cf. \cite[Thm.~3.3.9]{Ass.C.L.2018.MFCE}.

Then for any \(\theta\in (0,1)\), we obtain with the invariance of interpolation functors under retraction, see \cite[Thm~1.17.1.1]{Tri.1978.ITFS}, and well-known interpolation results for \(\vvH(\curl)\) spaces, see 
\cite[Thm.~1]{Hip.L.Z.2012}, that
\begin{align*}
    \bigl[V^0, V^1\bigr]_{\theta,2}
        = P \bigl[\vvH^0(\scurl_2, S),\vvH^1(\scurl_2, S)\bigr]_{\theta,2}
        = P \bigl(\vvH^{\theta}(\scurl_2, S)\bigr)
        = V^{\theta}.
\end{align*}
Finally, the identification of components \(T\vvv = (v_1, v_2)\) is bounded in the sense of \(V^0 \to L^2(S) \times L^2(S)\) and \(V^1 \to H^1_{S_2}(S) \times H^1_{S_1}(S)\).
Therefore, by interpolation, \(T\) is bounded from \(V^{\theta} \to \bigl[L^2(S), H^1_{S_2}(S)\bigr]_{\theta,2} \times \bigl[L^2(S), H^1_{S_1}(S)\bigr]_{\theta,2}\), which proves the claim.
\end{proof}
As a direct consequence, we obtain the following result for functions on the interface.
\begin{cor}\label{cor:TangentialRegularityLemma}
    Let \(\phi \in H^{3/2}(F)\cap H^1_0(F)\), then  \(\grad_F \phi \in \vvH^{1/2}_{\parallel,00}(F)\) and \(\|\grad_F\phi\|_{\vvH^{1/2}_{\parallel,00}(F)}\lesssim \|\phi\|_{H^{3/2}(F)}\).
\end{cor}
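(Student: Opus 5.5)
The plan is to reduce the statement directly to Lemma~\ref{lem:TangentialRegularityLemma} by an affine change of variables that identifies \(F\) with the square \(S\).

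\textbf{Setting up the map.} We identify \(F=\monosetc{\zeta}\times(0,1)^2\) with \((0,1)^2\) in the coordinates \((x_2,x_3)\). The affine map \(A(y_1,y_2) = \bigl((1+y_1)/2,(1+y_2)/2\bigr)\) sends \(S\) onto \(F\). It sends the sides \(S_1=\monosetc{y_1=\pm1}\) onto the edges \(\monosetc{x_2=0}\) and \(\monosetc{x_2=1}\), which are \(E_2^\pm\). It sends \(S_2\) onto \(E_3^\pm\). Composition with \(A\) is an isomorphism of \(L^2\), \(H^1\), \(H^1_0\), and of the spaces with vanishing trace on the corresponding edges. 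Since real interpolation commutes with such isomorphisms, composition with \(A\) is also an isomorphism between \(H^{3/2}(F)\) and \(H^{3/2}(S)\) and between the Lions--Magenes spaces \(H^{1/2}_{E_j^\pm}(F)\) and \(H^{1/2}_{S_{j-1}}(S)\). Hence \(f=\phi\circ A\in H^{3/2}(S)\cap H^1_0(S)\) with \(\|f\|_{H^{3/2}(S)}\lesssim\|\phi\|_{H^{3/2}(F)}\).

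\textbf{Applying the lemma.} Lemma~\ref{lem:TangentialRegularityLemma} gives \(\partial_{y_2}f\in H^{1/2}_{S_1}(S)\) and \(\partial_{y_1}f\in H^{1/2}_{S_2}(S)\), with the corresponding estimate. The chain rule yields \(\partial_{x_2}\phi = 2(\partial_{y_1}f)\circ A^{-1}\) and \(\partial_{x_3}\phi = 2(\partial_{y_2}f)\circ A^{-1}\). Therefore
\[
    \partial_{x_2}\phi\in H^{1/2}_{E_3^\pm}(F),
    \qquad
    \partial_{x_3}\phi\in H^{1/2}_{E_2^\pm}(F).
\]

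\textbf{Matching with \eqref{eq:VectorTraceSpace}.} The remaining step is to check that this component assignment agrees with the definition of \(\vvH^{1/2}_{\parallel,00}(F) = H^{1/2}_{E_3^\pm}(F)\times H^{1/2}_{E_2^\pm}(F)\). Under the identification of \(\vvL^2_\tau(F)\) with \(L^2(F)^2\) used in the paper, the first tangential component is the \(x_2\)-component and the second is the \(x_3\)-component. This is consistent with the vanishing conditions: the tangential component \(V_2\) of a field with electric boundary conditions vanishes on \(\Gamma_3^\pm\), hence on \(E_3^\pm\), and \(V_3\) vanishes on \(E_2^\pm\). So \(\grad_F\phi=(\partial_{x_2}\phi,\partial_{x_3}\phi)\) lies in \(\vvH^{1/2}_{\parallel,00}(F)\). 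The norm bound follows by chaining the isomorphism constants with the estimate of Lemma~\ref{lem:TangentialRegularityLemma}.

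The only point needing care, rather than a real obstacle, is this bookkeeping of which component lies in which edge space. The orientation of the rotated tangential component \(\vvn\times(\vvV\times\vvn)=(0,V_2,V_3)\) confirms that no swap of components occurs.
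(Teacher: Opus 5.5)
Your proof is correct and is exactly the argument the paper intends when it calls this a direct consequence of Lemma~\ref{lem:TangentialRegularityLemma}: rescale \(S\) affinely onto \(F\), transport the \(H^{3/2}\) and Lions--Magenes spaces through interpolation, and apply the lemma. Your bookkeeping is also right. The sides map as \(S_1\mapsto E_2^\pm\) and \(S_2\mapsto E_3^\pm\), which gives \(\partial_{x_2}\phi\in H^{1/2}_{E_3^\pm}(F)\) and \(\partial_{x_3}\phi\in H^{1/2}_{E_2^\pm}(F)\), in agreement with the ordering in \eqref{eq:VectorTraceSpace}.
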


The following lemma identifies the hidden boundary regularity of fields in the domain of the Maxwell operator \eqref{eq:MaxwellOperatorDefiniton}.
\begin{lem}\label{lem:FirstHiddenRegularity}
    Let \(\bigl(\vvE,\vvH\bigr) \in D(M)\). Then \(\jmp{\epsilon\vvE_\perp}\) belongs to \(H^{1/2}_{00}(F)\) and \(\jmp{\vvH_\tau}\) to \(\vvH^{1/2}_{\parallel,00}(F)\).
    Furthermore, we control
    \begin{equation}\label{eq:JumpToNormControl}
        \|\jmp{\epsilon\vvE_\perp}\|_{H^{1/2}_{00}(F)} 
        + \|\jmp{\vvH_\tau}\|_{\vvH^{1/2}_{\parallel,00}(F)}
            \lesssim \bigl\|\bigl(\vvE,\vvH\bigr)\bigr\|_{\energyspace}.
    \end{equation}
\end{lem}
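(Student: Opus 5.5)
The plan is to exploit the coupling between the two traces through the transmission condition and the surface continuity relation, and then run an elliptic bootstrap on $F$. Let $(\vvE,\vvH)\in D(M)$ and set $\vvG=\jmp{\vvH_\tau}=-\sigma\vvE_\parallel\in L^2(F)^2$. By Corollary~\ref{cor:ETraceinequalities}, $\jmp{\epsilon\vvE_\perp}\in L^2(F)$. As in the proof of Theorem~\ref{thm:SurfaceChargeContinuity}, $\div_F\vvG=\jmp{(\pcurl\vvH)_\perp}$, and $\pcurl\vvH=\epsilon\,(M(\vvE,\vvH))_1$ is piecewise divergence free. This gives $\div_F\vvG\in H^{-1/2}_{00}(F)$ with a bound by $\|\pcurl\vvH\|_{L^2}$.

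First I would decompose $\vvE$ as in the proof of Lemma~\ref{lem:EWeber}: $\vvE=\widetilde{\vvE}+\grad\phi$. Here $\widetilde{\vvE}\in\vvH_0(\curl,Q)\cap\vvH(\div_\epsilon=0,Q)\hookrightarrow PH^1(Q)^3$, so $\widetilde{\vvE}_\parallel\in\vvH^{1/2}_{\parallel,00}(F)$ by \eqref{eq:VectorTraceSpace}. The potential $\phi\in H^1_0(Q)$ is piecewise $\epsilon$-harmonic with $\jmp{\epsilon\partial_\vvn\phi}=\jmp{\epsilon\vvE_\perp}$. Hence $\vvE_\parallel=\widetilde{\vvE}_\parallel+\grad_F(\phi|_F)$, and taking the surface divergence gives
\[
\Delta_F(\phi|_F)=\div_F\vvE_\parallel-\div_F\widetilde{\vvE}_\parallel=-\sigma^{-1}\div_F\vvG-\div_F\widetilde{\vvE}_\parallel\in H^{-1/2}_{00}(F).
\]
Since $\phi|_F\in H^1_0(F)$ (by Lemma~\ref{lem:RellichInequality}), elliptic regularity for the Dirichlet Laplacian on the square, combined with interpolation between $H^{-1}\to H^1_0$ and $L^2\to H^2\cap H^1_0$ (convex domain), gives $\phi|_F\in H^{3/2}(F)\cap H^1_0(F)$. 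The interpolation step uses that $[H^{-1},L^2]_{1/2}=H^{-1/2}_{00}$ and $[H^1_0,H^2\cap H^1_0]_{1/2}=H^{3/2}\cap H^1_0$. The resulting bound is by $\|(\vvE,\vvH)\|_{\energyspace}$. Corollary~\ref{cor:TangentialRegularityLemma} then gives $\grad_F(\phi|_F)\in\vvH^{1/2}_{\parallel,00}(F)$, so $\vvE_\parallel$, and hence $\jmp{\vvH_\tau}=-\sigma\vvE_\parallel$, lie in $\vvH^{1/2}_{\parallel,00}(F)$ with the estimate \eqref{eq:JumpToNormControl}.

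For the surface charge I would upgrade Lemma~\ref{lem:RellichInequality} by one half derivative. The Fourier/separation-of-variables argument of part~\ref{lem:RellichInequalitySquare} shows more generally that the Dirichlet-to-Neumann map acts like $(-\Delta_\Gamma)^{1/2}$ on the sine basis, with $\coth$ factors bounded above and below. It therefore maps $D((-\Delta_\Gamma)^{s/2})$ isomorphically onto $D((-\Delta_\Gamma)^{(s-1)/2})$. With $s=3/2$, this gives $\jmp{\epsilon\partial_\vvn\phi}\in D((-\Delta_F)^{1/4})=H^{1/2}_{00}(F)$. Finally, $\jmp{\epsilon\vvE_\perp}=\jmp{\epsilon\widetilde{\vvE}_\perp}+\jmp{\epsilon\partial_\vvn\phi}$, and I claim $\jmp{\epsilon\widetilde{\vvE}_\perp}=0$. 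Indeed, $\widetilde{\vvE}\in\vvH(\div_\epsilon,Q)$ globally, by \eqref{eq:ConnectionToFullSpaces:Div} and the choice of $\phi$, which removes the charge. So the jump equals $\jmp{\epsilon\partial_\vvn\phi}\in H^{1/2}_{00}(F)$, with norm controlled by $\|\phi|_F\|_{H^{3/2}(F)}$.

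The main obstacle is the identification $D((-\Delta_F)^{3/4})=H^{3/2}(F)\cap H^1_0(F)$ at the critical level $3/2$. This is exactly the space $H^{3/2}_0$ of \eqref{eq:InterpolationSpaces}, and it must agree with what Corollary~\ref{cor:TangentialRegularityLemma} requires. I would handle it by consistently using the interpolation definitions \eqref{eq:InterpolationSpaces} together with the spectral calculus of the Dirichlet Laplacian. The same step also needs the justification that $\div_F\vvG\in H^{-1/2}_{00}(F)$, via \cite[(2.23)]{Ass.C.L.2018.MFCE}.
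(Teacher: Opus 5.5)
Your proposal is correct. For the tangential jump it is essentially the paper's argument: the elliptic shift $\phi|_F\in H^{3/2}_0(F)$ from $\Delta_F(\phi|_F)\in H^{-1/2}_{00}(F)$, followed by Corollary~\ref{cor:TangentialRegularityLemma}, is the same. The only change there is that you decompose $\vvE=\widetilde{\vvE}+\grad\phi$ globally, with the $\epsilon$-transmission potential $\phi\in H^1_0(Q)$ of Lemma~\ref{lem:EWeber}. The paper instead uses a regular decomposition $\vvE^{(1)}=\vvPhi+\grad\phi$ on $Q_1$ alone.

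The genuine difference is in the normal jump. The paper lifts $\grad_F\phi$ into $Q_1$ with $\vvR_\parallel$ from Lemma~\ref{lem:RightInverse}. It then uses a second regular decomposition to get $\phi|_{\partial Q_1}\in H^{3/2}(\partial Q_1)$, applies bulk $H^2(Q_1)$-regularity and a trace lemma to obtain $\partial_\vvn\phi\in H^{1/2}_{00}(F)$ and hence $\vvE^{(1)}_\perp\in H^{1/2}_{00}(F)$, and repeats on $Q_2$.

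Your global choice of $\phi$ makes $\jmp{\epsilon\vvE_\perp}=\jmp{\epsilon\partial_\vvn\phi}$ exactly the Dirichlet-to-Neumann image of $\phi|_F$. In the sine basis its symbol is, up to sign, $\kappa_{nm}\bigl(\epsilon^{(1)}\coth(\kappa_{nm}(1+\zeta))+\epsilon^{(2)}\coth(\kappa_{nm}(1-\zeta))\bigr)$ with $\kappa_{nm}=\pi\sqrt{n^2+m^2}$. So the half-derivative upgrade $D((-\Delta_F)^{3/4})\to D((-\Delta_F)^{1/4})=H^{1/2}_{00}(F)$ of Lemma~\ref{lem:RellichInequality} is immediate.

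This route is shorter and avoids the right inverse and the bulk $H^2$-regularity. It also needs no identification at the critical level $3/2$ for the normal part. The step you flag as the main obstacle is milder than you think: Corollary~\ref{cor:TangentialRegularityLemma} only needs the inclusion $D((-\Delta_F)^{3/4})\hookrightarrow H^{3/2}(F)\cap H^1_0(F)$, which follows by interpolating $D(-\Delta_F)\hookrightarrow H^2(F)$.

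One small precision: Lemma~\ref{lem:RellichInequality} as stated gives only $\phi|_F\in H^1(F)$. Membership in $H^1_0(F)$ comes from its sine-series proof, or from $\vvE_\parallel\in L^2(F)^2$ as in the paper.

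The price of your approach is a heavier reliance on the separable box geometry. The paper's route yields $\phi\in H^2(Q_i)$ and the one-sided normal traces as by-products, and is closer to arguments that work in less special geometries.
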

\begin{proof}
    1).
    Let \(\vvH_{\partial Q_1\setminus F}(\curl,Q_1) = \setc{\vvV\in \vvH(\curl,Q_1)}{\restrict{\vvV\times \nu}{\partial Q_1\setminus F} = 0 }\), see \cite[p. 99]{Ass.C.L.2018.MFCE}.
    We choose a regular decomposition \(\vvPhi\in H^1(Q_1)^3\cap \vvH_{\partial Q_1\setminus F}(\curl,Q_1)\) and \(\phi \in H^1_{\partial Q_1\setminus F}(Q_1)\) such that 
    \begin{equation}\label{eq:FirstDecomposition}
       \vvE^{(1)} = \vvPhi + \grad \phi,
        \quad \|\vvPhi\|_{H^1(Q_1)^3} + \|\phi\|_{H^1(Q_1)} \lesssim \|\vvE^{(i)}\|_{\vvH(\curl,Q_1)}.
    \end{equation}
    See, for example, \cite[Thm.~5.2]{Pau.S.2022}.
    As in \eqref{eq:VectorTraceSpace}, we have \(\vvPhi_\parallel \in \vvH^{1/2}_{\parallel,00}(F)\), and Lemma~7.3 in \cite{Zer.2021} shows that \(\restrict{\phi}{F} \in H^{1/2}_{00}(F)\).
    Since \(\vvE_{\parallel} \in L^2(F)^2\), the map \(\grad_F \restrict[]{\phi}{F}  = \vvE_\parallel - \vvPhi_\parallel\) belongs to \(L^2(F)^2\) and therefore \(\restrict[]{\phi}{F}\) to \(H^1(F)\cap H^{1/2}_{00}(F)\).
    Hence, Corollary~2.4 of \cite{Nic.1992} yields \(\restrict[]{\phi}{F}  \in H^1_0(F)\).

    Since \(-\sigma \vvE_\parallel = \jmp{\vvH_\tau}\) by the Maxwell domain \eqref{eq:MaxwellOperatorDomain}, the trace theorem in \(\vvH(\curl)\) shows \(-\sigma \div_F \vvE_\parallel = \div_F \jmp{\vvH_\tau}\in H^{-1/2}_{00}(F)\), see \cite[Thm.~3.15]{Buf.C.2001a}.
    As a result, \(\div_F\grad_F \phi\) is contained in \(H^{-1/2}_{00}(F)\).
        The operator
        \begin{equation*}
            -\Delta_F \coloneq -\div_F \grad_F \,:\, L^2(F) \supset D(-\Delta_F) \to L^2(F),
            \qquad D(-\Delta_F) \coloneq H^2(F)\cap H^1_0(F)
        \end{equation*}
        is densely defined, self-adjoint and positive with compact resolvent.
        The space \(H^{-1/2}_{00}(F)\) is the adjoint of \(H^{1/2}_{00}(F)\) with respect to \(L^2(F)\).
        The later space is defined in \eqref{eq:InterpolationSpaces}, and can be written by reiteration as
        \begin{equation*}
            H^{1/2}_{00}(F) 
                = \bigl[L^2(F), H^2(F) \cap H^1_0(F)\bigr]_{1/4,2} 
                = \bigl[L^2(F), D(-\Delta_F)\bigr]_{1/4,2}
        \end{equation*}
        Application of the theory of Banach scales generated by \(-\Delta_F\), see \cite[Thm.~V.1.4.12,V.1.5.4]{Ama.1995}, then shows that the inverse \((-\Delta_F)^{-1}\) maps \(H^{-1/2}_{00}(F)\) onto \(H^{3/2}_{0}(F)\) as define in \eqref{eq:InterpolationSpaces}.
        Therefore, since \(-\Delta_F \restrict[]{\phi}{F} \in H^{-1/2}_{00}(F)\), we conclude that
    \begin{equation}\label{eq:FirstEllipticShift}
        \restrict{\phi}{F} \in H^{3/2}(F)\cap H^1_0(F),
        \quad
        \|\restrict{\phi}{F}\|_{H^{3/2}(F)}
        \lesssim \|\div_F \jmp{\vvH_\tau}\|_{H^{-1/2}_{00}(F)} + \|\div_F \vvPhi_\parallel\|_{H^{-1/2}_{00}(F)}.
    \end{equation}
    Corollary~\ref{cor:TangentialRegularityLemma} yields \(\grad_F \phi \in \vvH^{1/2}_{\parallel,00}(F)\), and thus
    \begin{equation*}
        -\frac{1}{\sigma}\jmp{\vvH_\tau} = \vvE_{\parallel} = \vvPhi_{\parallel} + \grad_F\phi \in \vvH^{1/2}_{\parallel,00}(F),
    \end{equation*}
    with
    \begin{equation*}
        \|\jmp{\vvH_\tau}\|_{\vvH^{1/2}_{\parallel,00}(F)} \lesssim \|\vvPhi\|_{H^1(Q_1)^3} + \|\restrict{\phi}{F}\|_{H^{3/2}(F)}
        \lesssim
        \bigl\|(\vvE,\vvH)\bigr\|_{\energyspace},
    \end{equation*}
    using also \eqref{eq:FirstDecomposition}, \eqref{eq:FirstEllipticShift}, and \cite[Prop.~3.13, Thm.~3.15]{Buf.C.2001a}

    2).
    It remains to show that the normal jump of the electric field is also regular.
    Let \(\vvU = \restrict{(\vvR_\parallel \grad_F \phi)}{Q_1} \in H^1(Q_1)^3 \cap \vvH_{\partial Q_1\setminus F}(\curl, Q_1)\) with the right inverse defined in Lemma~\ref{lem:RightInverse}.
    Then \(\vvV = \vvU - \grad \phi\) is an element of \(\vvH_0(\curl,Q_1)\).
    We choose another regular decomposition \(\vvLambda\in H^1(Q_1)^3\) and \(\lambda \in H^1_0(Q_1)\) such that \(\vvV = \vvLambda + \grad \lambda\) with
    \begin{equation*}
        \|\vvLambda\|_{H^1(Q_1)^3} + \|\lambda\|_{H^1(Q_1)} 
        \lesssim \|\vvV\|_{\vvH(\curl,Q_1)}
        \lesssim \|\phi\|_{H^{1}(Q_1)} + \|\grad_F \phi \|_{\vvH^{1/2}_{\parallel,00}(F)}
        \lesssim \bigl\|(\vvE,\vvH)\bigr\|_{\energyspace},
    \end{equation*}
    see, e.g., \cite[Prop.~3.7.2]{Ass.C.L.2018.MFCE}.
    Thus, \(\grad(\lambda + \phi) = \vvU - \vvLambda\) belongs to \(H^1(Q_1)^3\), and therefore \(\restrict{\phi}{\partial Q_1} = \restrict{(\lambda + \phi)}{\partial Q_1}\) to \(H^{3/2}(\partial Q_1)\) with the estimate 
    \begin{equation*}
        \|\restrict{\phi}{\partial Q_1}\|_{H^{3/2}(\partial Q_1)}
        \lesssim
        \|\phi + \lambda\|_{H^1(Q_1)} + \|\vvU+\vvLambda\|_{H^1(Q_1)^3} 
        \lesssim
        \bigl\|(\vvE,\vvH)\bigr\|_{\energyspace} + \|\grad_F \phi\|_{\vvH^{1/2}_{\parallel,00}(F)}
        \lesssim
        \bigl\|(\vvE,\vvH)\bigr\|_{\energyspace}.
    \end{equation*}
    Since \(-\Delta \phi  = \div \vvPhi - \div \vvE^{(1)} = \div\vvPhi \in L^2(Q_1)\), elliptic regularity yields \(\phi \in H^2(Q_1)\cap H^1_{\partial Q_1\setminus F}(Q_1)\), and
    \begin{equation*}
        \|\phi\|_{H^2(Q_1)} 
            \lesssim \|\div\Phi\|_{L^2(Q_1)} + \|\restrict{\phi}{\partial Q_1}\|_{H^{3/2}(\partial Q_1)}
            \lesssim \bigl\|(\vvE,\vvH)\bigr\|_{\energyspace}.
    \end{equation*}
    The first component of \(\vvPhi\) is by the choice of the regular decomposition element of \(H^1(Q_1)\) with zero trace at the boundary facets \(\Gamma_2,  \Gamma_3 \subset \partial Q\), normal to the unit vectors \(\vve_2\) and \(\vve_3\) respectively.
    Lemma~7.3 in \cite{Zer.2021} then shows that the normal trace \(\vvPhi_\perp\) belongs to  \(H^{1/2}_{00}(F)\).
    The scalar potential \(\phi\) belongs to \(H^2(Q_1)\cap H^1_{\partial Q_1 \setminus F}(Q_1)\).
    Lemma~2.1 in \cite{Eil.Sch.2018} in turn shows that \(\partial_n \phi \in H^{1/2}_{00}(F)\).
    We conclude that
    \begin{equation*}
        \vvE^{(1)}_\perp = \vvPhi_\perp + \partial_n \phi \in H^{1/2}_{00}(F).
    \end{equation*}
    Repeating the argument in \(Q_2\) ends the proof, where the asserted estimate follows as in part 1).
\end{proof}

We define the spaces 
\begin{align*}
    \widetilde{\bm{\mathcal{H}}} &= \vvH(\pdiv_\epsilon,Q)\times \vvH_0(\div_\mu,Q),\\
    \widetilde{\bm{\mathcal{V}}} &=
    \setc[\big]{(\vvE,\vvH) \in \widetilde{\bm{\mathcal{H}}}}{
        (\vvE,\vvH) \in \vvH_0(\curl,Q) \times \vvH(\curl,Q),\,\,\jmp{\varepsilon\vvE_\perp} \in H^{1/2}_{00}(F)},
\end{align*}
equipped with their graph norms.
By Lemma~3.1 in \cite[]{Dor.Z.2023}, the embedding \(\widetilde{\bm{\mathcal{V}}}\hookrightarrow PH^1(Q)^6\) is continuous.
Building on the hidden regularity provided by Lemma \ref{lem:FirstHiddenRegularity}, we obtain sufficient control of the normal jump. Combined with the improved regularity of the tangential jump, this permits its removal via the appropriate right inverse from Lemma~\ref{lem:RightInverse}. With the aforementioned embedding, we thus obtain the following result.
\begin{thm}\label{thm:FirstRegularEmbedding}
    The domain \(D(M) \subset \mathcal{V}\) embeds continuously into \(PH^1(Q)^6\).
\end{thm}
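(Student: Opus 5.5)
The plan is to reduce the magnetic field to an element of \(\widetilde{\bm{\mathcal{V}}}\) by subtracting a regular lifting of its tangential jump, and then to invoke the continuous embedding \(\widetilde{\bm{\mathcal{V}}}\hookrightarrow PH^1(Q)^6\) from Lemma~3.1 of \cite{Dor.Z.2023}.

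Let \((\vvE,\vvH)\in D(M)\). By Lemma~\ref{lem:FirstHiddenRegularity}, the jump \(\vvG:=\jmp{\vvH_\tau}\) lies in \(\vvH^{1/2}_{\parallel,00}(F)\) and \(\jmp{\epsilon\vvE_\perp}\) lies in \(H^{1/2}_{00}(F)\), both bounded by \(\|(\vvE,\vvH)\|_{\energyspace}\) according to \eqref{eq:JumpToNormControl}. With the right inverse \(\vvR_\tau\) of Lemma~\ref{lem:RightInverse} we set \(\vvW=\vvR_\tau\vvG\). Then
\[
\vvW\in \vvH(\pcurl,Q)\cap\vvH_0(\div_\mu,Q)\cap PH^1(Q)^3,
\qquad
\jmp{\vvW_\tau}=\vvG,
\]
with \(\|\vvW\|_{PH^1(Q)^3}\lesssim\|\vvG\|_{\vvH^{1/2}_{\parallel,00}(F)}\). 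Define \(\widetilde{\vvH}=\vvH-\vvW\). It belongs to \(\vvH(\pcurl,Q)\) and has vanishing tangential jump in \(H^{-1/2}_{00}(F)^2\). By \eqref{eq:ConnectionToFullSpaces:Curl} this gives \(\widetilde{\vvH}\in\vvH(\curl,Q)\). Moreover, \(\widetilde{\vvH}\in\vvH_0(\div_\mu,Q)\), since both \(\vvH\) and \(\vvW\) lie there. Each norm involved is bounded by \(\|(\vvE,\vvH)\|_{\energyspace}\): for \(\widetilde{\vvH}\) we use \(\|\pcurl\vvW\|_{L^2}\lesssim\|\vvW\|_{PH^1}\) and \(\|\div(\mu\vvW)\|_{L^2}\lesssim\|\vvW\|_{PH^1}\).

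For the electric field, \(\vvE\in\vvH_0(\curl,Q)\cap\vvH(\pdiv_\epsilon=0,Q)\) with \(\jmp{\epsilon\vvE_\perp}\in H^{1/2}_{00}(F)\). Hence \((\vvE,\widetilde{\vvH})\in\widetilde{\bm{\mathcal{V}}}\), and its graph norm is controlled by \(\|(\vvE,\vvH)\|_{\energyspace}\), again using \eqref{eq:JumpToNormControl} for the \(H^{1/2}_{00}\)-part. The embedding of \(\widetilde{\bm{\mathcal{V}}}\) then gives \(\|\vvE\|_{PH^1}+\|\widetilde{\vvH}\|_{PH^1}\lesssim\|(\vvE,\vvH)\|_{\energyspace}\). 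Adding back \(\vvW\) yields \(\vvH\in PH^1(Q)^3\) with the same bound.

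Finally, the energy norm is dominated by the graph norm of \(M\). The curls appear directly in \(\|M(\vvE,\vvH)\|_{\statespace}\), and the interface terms are controlled by the identity \(\sigma\|\vvE_\parallel\|^2_{L^2(F)^2}=-(M\vvu,\vvu)_{\statespace}\) from Corollary~\ref{cor:OperatorIsDissipative}, together with \(\jmp{\vvH_\tau}=-\sigma\vvE_\parallel\). This makes the embedding continuous for the graph norm of \(D(M)\).

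The main point to verify is that the norm of \(\widetilde{\bm{\mathcal{V}}}\) used in \cite{Dor.Z.2023} includes the \(H^{1/2}_{00}(F)\)-norm of the normal jump and the piecewise divergence, exactly as we control them. The substantive difficulty, the hidden regularity of the jumps, is already settled by Lemma~\ref{lem:FirstHiddenRegularity}.
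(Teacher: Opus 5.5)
Your proposal is correct and follows the paper's proof. You subtract the lifting $\vvR_\tau\jmp{\vvH_\tau}$ (the paper adds $\vvU=-\vvR_\tau\jmp{\vvH_\tau}$), place the corrected pair in $\widetilde{\bm{\mathcal{V}}}$, and bound every term through Lemma~\ref{lem:FirstHiddenRegularity} and Lemma~\ref{lem:RightInverse}, exactly as the paper does. Your closing step, which dominates the energy norm by the graph norm of $M$ via Corollary~\ref{cor:OperatorIsDissipative}, is a correct supplement that the paper leaves implicit and only relies on later, in Lemma~\ref{lem:FirstOrderCompactResolvent}.
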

\begin{proof}
    Let \(\bigl(\vvE,\vvH\bigr)\in D(M)\) and \(\vvU = -\vvR_\tau \jmp{\vvH_\tau} \in PH^1(Q)^3 \cap \vvH_0(\div_\mu,Q)\), where \(\vvR_\tau\) is defined in Lemma~\ref{lem:RightInverse}.
    By \eqref{eq:ConnectionToFullSpaces}, we conclude that \(\bigl(\vvE, \vvH+\vvU\bigr) \in \widetilde{\bm{\mathcal{V}}} \hookrightarrow PH^1(Q)^3\).
    Furthermore, Lemmata~\ref{lem:RightInverse} and \ref{lem:FirstHiddenRegularity} imply
    \begin{align*}
        \bigl\|(\vvE,\vvH)\bigr\|_{PH^1(Q)^6}
        &\leq \bigl\|(\vvE,\vvH+\vvU)\bigr\|_{PH^1(Q)^6}
            + \bigl\|\vvU\bigr\|_{PH^1(Q)^3}\\
        &\lesssim \bigl\|(\vvE,\vvH+\vvU)\bigr\|_{\widetilde{\bm{\mathcal{V}}}}
            + \bigl\|\jmp{\vvH_\tau}\bigr\|_{\vvH^{1/2}_{\parallel,00}(F)}\\
        &\lesssim \bigl\|(\vvE,\vvH+\vvU)\bigr\|_{\widetilde{\bm{\mathcal{V}}}}
            + \bigl\|(\vvE,\vvH)\bigr\|_{\energyspace}.
    \end{align*}
    Similar, we estimate
    \begin{align*}
        \bigl\|(\vvE,\vvH+\vvU)\bigr\|_{\widetilde{\bm{\mathcal{V}}}}
            &\leq \bigl\|(\vvE,\vvH+\vvU)\bigr\|_{L^2(Q)^6}
                + \bigl\|\curl\vvE\bigr\|_{L^2(Q)^6} + \bigl\|\pcurl\vvH\bigr\|_{L^2(Q)^6}\\
                &\quad\,\,\,+ 
                \bigl\|\pcurl \vvU\bigr\|_{L^2(Q)^3}
                + \bigl\|\div \mu \vvU\|_{L^2(Q)^3}
                + \bigl\|\jmp{\epsilon\vvE_\perp}\bigr\|_{H^{1/2}_{00}(F)}
                + \bigl\|(\vvE,\vvH)\|_{\energyspace}\\
            &\lesssim \bigl\|(\vvE,\vvH)\bigr\|_{\energyspace} + \bigl\|\vvU\bigr\|_{PH^1(Q)^3}%
            \lesssim \bigl\|(\vvE,\vvH)\|_{\energyspace}.
    \end{align*}
\end{proof}

\section{Stability}\label{sec:Stability}

By Theorem~\ref{thm:Existence}, problem \eqref{eq:PDE} is governed by a contraction semigroup, and Corollary~\ref{cor:OperatorIsDissipative} identifies the tangential component of the electric field on the interface as the only dissipation mechanism.
In this section, we investigate for which initial values this dissipation drives the energy to zero as \(t\to\infty\).
Since the damping acts on the interface \(F\) only, decay cannot be expected for all initial values.
We show the following decomposition result.
\begin{prop}\label{prop:Decomp}
    There exists a closed subspace \(\statespace_{\mathrm{u}}\subset \statespace\) with orthogonal projection \(\mathcal{P}_{\mathrm{u}}\,:\, \statespace \to \statespace_{\mathrm{u}}\) such that
    \begin{equation*}
        \bigl\|e^{t M} \vvu_0\bigr\| \to \bigl\|\mathcal{P}_{\mathrm{u}} \vvu_0\bigr\|,
        \quad t\to \infty,
        \qquad \text{ for all } \vvu_0\in \statespace.
    \end{equation*}
    Moreover, the orthogonal decomposition \(\statespace = \statespace_{\mathrm{u}}\oplus\statespace_{\mathrm{s}}\) into \(\statespace_{\mathrm{u}}\) and \(\statespace_{\mathrm{s}} = \statespace_{\mathrm{u}}^\perp\) is maximal in the sense that
    \begin{enumerate}[label=\normalfont (\alph*)]
        \item\label{prop:Decomp:Stable}
            \(\statespace_{\mathrm{s}} = \setc[\big]{\vvu\in\statespace}{\|e^{tM}\vvu\|_{\statespace}\to 0 \text{ as } t\to\infty}\), and
        \item\label{prop:Decomp:Unitary}
            \(\statespace_{\mathrm{u}} = \setc[\big]{\vvu\in\statespace}{\|e^{tM}\vvu\|_{\statespace} = \|\vvu\|_{\statespace} \text{ for all } t\geq 0}\).
    \end{enumerate}
    In particular, \(\statespace_{\mathrm{s}}\) is the largest subspace of \(\statespace\) on which the semigroup \(\bigl(e^{tM}\bigr)_{t\geq 0}\) is strongly stable, and \(\statespace_{\mathrm{u}}\) is the largest subspace on which it acts isometrically.
\end{prop}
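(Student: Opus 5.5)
The plan is to combine compactness of the resolvent with the standard decomposition of contraction semigroups, in the spirit of Benchimol and of the collocated-feedback setting of \cite{Amm.Nic.2015}.

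\textbf{Step 1: compact resolvent and spectrum.} By Theorem~\ref{thm:FirstRegularEmbedding}, $D(M)$ embeds continuously into $PH^1(Q)^6$. By Rellich's theorem on each $Q_i$, the embedding $PH^1(Q)^6\hookrightarrow L^2(Q)^6$ is compact. Since $\statespace$ is closed in $L^2(Q)^6$, the resolvent $(I-M)^{-1}$ is compact on $\statespace$. Hence $\sigma(M)$ consists of isolated eigenvalues of finite multiplicity. Because $M$ is dissipative by Corollary~\ref{cor:OperatorIsDissipative}, these eigenvalues lie in $\{\operatorname{Re}\lambda\le 0\}$. If $M\vvu=i\beta\vvu$ with $\beta\in\R$, taking the real part of $(M\vvu,\vvu)$ gives $\vvE_\parallel=0$, hence $\jmp{\vvH_\tau}=0$. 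Then $M\vvu=M^*\vvu$ holds for such $\vvu$: the adjoint is the same operator with $+\sigma$ in place of $-\sigma$ in the transmission condition, which is verified via Theorem~\ref{cor:PartialIntegrationFormula}.

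\textbf{Step 2: definition of $\statespace_{\mathrm u}$.} I set $\statespace_{\mathrm u}=\overline{\operatorname{span}}\{\vvu : M\vvu=i\beta\vvu,\ \beta\in\R\}$. Eigenvectors of $M$ for distinct imaginary eigenvalues are also eigenvectors of $M^*$ with conjugate eigenvalues. It follows that they are mutually orthogonal, and that both $\statespace_{\mathrm u}$ and $\statespace_{\mathrm s}=\statespace_{\mathrm u}^\perp$ are invariant under $e^{tM}$. To see this for $\statespace_{\mathrm s}$: if $\vvw\perp$ all imaginary eigenvectors, then $(e^{tM}\vvw,\vvu)=(\vvw,e^{tM^*}\vvu)=e^{-i\beta t}(\vvw,\vvu)=0$. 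On $\statespace_{\mathrm u}$, the semigroup acts by $e^{i\beta t}$ on orthogonal eigenvectors and is therefore unitary.

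\textbf{Step 3: stability on $\statespace_{\mathrm s}$.} The restriction $M_{\mathrm s}$ of $M$ to $\statespace_{\mathrm s}$ generates a contraction semigroup with compact resolvent and has no imaginary eigenvalues, hence $\sigma(M_{\mathrm s})\cap i\R=\emptyset$. The Arendt--Batty--Lyubich--V\~u theorem then gives strong stability on $\statespace_{\mathrm s}$; the countability of the boundary spectrum is trivial here. Consequently, $\|e^{tM}\vvu_0\|^2=\|e^{tM}\mathcal P_{\mathrm u}\vvu_0\|^2+\|e^{tM}(I-\mathcal P_{\mathrm u})\vvu_0\|^2\to\|\mathcal P_{\mathrm u}\vvu_0\|^2$, using orthogonality and invariance.

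\textbf{Step 4: maximality.} For (a): if $e^{tM}\vvu\to0$, the unitary component $\|e^{tM}\mathcal P_{\mathrm u}\vvu\|=\|\mathcal P_{\mathrm u}\vvu\|$ must vanish. For (b): if the norm is preserved for all $t$, then $\|\mathcal P_{\mathrm u}\vvu\|^2+\|e^{tM}\vvu_{\mathrm s}\|^2=\|\vvu\|^2$, and the second term tends to $0$; hence $\vvu_{\mathrm s}=0$. The converse inclusions in (a) and (b) were shown in Step 3 and Step 2, respectively.

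The main obstacle I expect is the invariance of $\statespace_{\mathrm s}$ in Step 2, which requires identifying $M^*$, or at least checking $M^*\vvu=-i\beta\vvu$ for the undamped eigenvectors. I would first try to prove this directly with the integration by parts formula, exploiting $\vvE_\parallel=0=\jmp{\vvH_\tau}$. Alternatively, I would use the fact that $\|e^{tM}\vvu\|$ is constant for these eigenvectors to deduce $e^{tM^*}e^{tM}\vvu=\vvu$ for a contraction.
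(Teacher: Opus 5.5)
Your proof is correct. Its skeleton matches the paper's: compact resolvent from Theorem~\ref{thm:FirstRegularEmbedding}, imaginary eigenvectors being undamped, the eigenvector relation for $M^\ast$, reducing subspaces, Arendt--Batty--Lyubich--V\~u on the complement, then Pythagoras and maximality.

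\textbf{Where your route differs.} The difference lies in how you define $\statespace_{\mathrm{u}}$. You take the closed span of \emph{all} eigenvectors of $M$ with eigenvalues in $i\R$. Two things then come for free:
\begin{itemize}
\item The absence of imaginary eigenvalues of $M_{\mathrm{s}}$ is a tautology.
\item You do not need Lemma~\ref{lem:FirstOrderKernel}, since a kernel would simply be absorbed into $\statespace_{\mathrm{u}}$.
\end{itemize}
The paper instead defines $\statespace_{\mathrm{u}}$ as the closed span of the lifted unobserved cavity modes $\vvTheta_j^\pm$, built from the auxiliary operator $A_{\redstatespace}$. It then has to prove, via Lemma~\ref{lem:FirstOrderImaginaryEigenvalues} and Corollary~\ref{cor:FirstOrderSpectrumOnAxis}, that every imaginary eigenfunction lies in this span. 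A posteriori the two subspaces coincide.

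\textbf{What each route buys.} Yours is shorter and purely abstract: it works for any dissipative generator with compact resolvent once the $M^\ast$-relation is available. The paper's route yields an explicit orthonormal basis of $\statespace_{\mathrm{u}}$ indexed by $J_{\mathrm{u}}$. That basis is exactly what is needed to turn $\mathcal{P}_{\mathrm{u}}\vvu_0=0$ into the moment conditions of Theorem~\ref{thm:FirstOrderStability}.

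\textbf{The adjoint relation.} The paper uses essentially your second alternative: norm preservation under the contraction $e^{tM^\ast}$, via equality in Cauchy--Schwarz. Your first alternative via Theorem~\ref{cor:PartialIntegrationFormula} also works. For $\vvu=(\vvU,\vvV)$ with $\vvU_\parallel=0=\jmp{\vvV_\tau}$, the pairing $(M\vvw,\vvu)_{\statespace}$ with $\vvw\in D(M)$ produces no interface terms. Hence $\vvu\in D(M^\ast)$ without identifying the full adjoint domain.

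\textbf{A sign slip in Step~1.} The formal adjoint is $M^\ast(\vvU,\vvV)=(-\epsilon^{-1}\pcurl\vvV,\mu^{-1}\curl\vvU)$ with the condition $\jmp{\vvV_\tau}=+\sigma\vvU_\parallel$, i.e.\ the time-reversed operator. So on undamped vectors $M^\ast\vvu=-M\vvu$, not $M\vvu$. This gives the conjugate eigenvalue $-i\beta$, which is what you correctly use afterwards.
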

The closed subspace \(\statespace_{\mathrm{u}}\) is spanned by eigenfunctions \(\monosetc{\vvTheta_j^\pm}_{j\in J_\mathrm{u}}\) of \(M\) with purely imaginary eigenvalues that are unobserved by the damping.

In the non-refracting case \(\epsilon^{(1)}\mu^{(1)}=\epsilon^{(2)}\mu^{(2)}\), we can explicitly calculate those eigenfunctions.
Under a non-resonance condition on the interface position \(\zeta\in (-1,1)\), we can then characterize the decaying initial fields by two moment conditions.
\begin{thm}\label{thm:FirstOrderStability}
    Assume that \(\epsilon^{(1)}\mu^{(1)}=\epsilon^{(2)}\mu^{(2)}=\nu>0\).
    Let \(\zeta\in(-1,1)\) with \(\zeta\notin\mathbb{Q}\), and let \(\vvu_0 = \bigl(\vvE_0,\vvH_0\bigr)\in\statespace\).
    Then \(\bigl\|e^{tM}\vvu_0\bigr\|_{\statespace}\to 0\) as \(t\to\infty\) if and only if
    \begin{equation}\label{eq:FirstOrderMomentConditions}
        \int_{-1}^{1}\vvE_0\cdot\vve_1\,\mathrm{d}x_1 = 0
        \quad\text{a.e.\ on } (0,1)^2,
        \qquad
        \partial_2 h_3 - \partial_3 h_2 = 0
        \quad\text{in } \mathcal{D}'\bigl((0,1)^2\bigr),
    \end{equation}
    where \(h_l(x_2,x_3) = \int_{-1}^{1}\mu(x_1)\bigl(\vvH_0\cdot\vve_l\bigr)(x_1,x_2,x_3)\,\mathrm{d}x_1\), for \(l\in\monosetc{2,3}\).
\end{thm}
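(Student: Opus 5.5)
The plan is to combine Proposition~\ref{prop:Decomp} with an explicit description of $\statespace_{\mathrm{u}}$. By that proposition and the remark following it, $\|e^{tM}\vvu_0\|_{\statespace}\to0$ holds if and only if $\vvu_0\perp\statespace_{\mathrm{u}}$. Here $\statespace_{\mathrm{u}}$ is the closed span of the eigenfunctions $\vvTheta$ of $M$ with $M\vvTheta=i\lambda\vvTheta$, $\lambda\in\R$, which are unobserved, i.e.\ $\vvE_\parallel=0$ on $F$. By Corollary~\ref{cor:OperatorIsDissipative}, every eigenfunction with purely imaginary eigenvalue is automatically unobserved. So the task is to determine all $(\vvE,\vvH)\in D(M)$ with $M(\vvE,\vvH)=i\lambda(\vvE,\vvH)$ and $\vvE_\parallel=0$. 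Then $\inner$-products of $\vvu_0$ against them must be translated into \eqref{eq:FirstOrderMomentConditions}.

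First I exclude $\lambda=0$. In that case $\curl\vvE=0$ and $\vvE\in\vvH_0(\curl,Q)$, so $\vvE=\grad\phi$ with $\phi\in H^1_0(Q)$. Since $\vvE$ lies in $\statespace$, $\phi$ is piecewise harmonic, and $\vvE_\parallel=0$ gives $\phi|_F=0$, hence $\phi=0$. For $\vvH$ I argue analogously: $\pcurl\vvH=0$, $\jmp{\vvH_\tau}=0$ and $\div\mu\vvH=0$, so the vector potential of Lemma~\ref{lem:HWeber} vanishes.

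For $\lambda\neq0$ the conditions $\vvE_\parallel=0$ and $\jmp{\vvH_\tau}=-\sigma\vvE_\parallel=0$ say that on each $Q_i$ the field is an eigenfield of the undamped problem with perfect-conductor conditions on all of $\partial Q_i$, and that $\vvH_\tau$ is continuous across $F$. By Theorem~\ref{thm:FirstRegularEmbedding} the fields are piecewise $H^1$, which justifies the following expansion. I expand each component in the transverse variables $(x_2,x_3)$ in the sine/cosine bases compatible with $\vvE\times\vvnu=0$ and $\vvH\cdot\vvnu=0$ on $\Gamma_2^\pm,\Gamma_3^\pm$. This reduces the problem to ODEs in $x_1$ with a common transverse wavenumber $\kappa$. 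Because $\epsilon^{(1)}\mu^{(1)}=\epsilon^{(2)}\mu^{(2)}=\nu$, the axial wavenumber $k_1$ with $k_1^2=\nu\lambda^2-\kappa^2$ is the same on both sides.

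The electric tangential components must vanish at $x_1=-1,\zeta$ on $Q_1$ and at $x_1=\zeta,1$ on $Q_2$. A mode nonzero on $Q_1$ with $k_1\neq0$ therefore requires $k_1=m\pi/(1+\zeta)$. If such a mode vanished on $Q_2$, then on $F$ both its tangential electric trace and, by continuity of $\vvH_\tau$, its tangential magnetic trace would vanish, and unique continuation (explicit for these ODEs) would force it to vanish on $Q_1$ as well. So the mode is nonzero on both sides, and also $k_1=m'\pi/(1-\zeta)$ with $m,m'\geq1$. This gives $(1-\zeta)/(1+\zeta)\in\mathbb{Q}$, hence $\zeta\in\mathbb{Q}$, which is excluded. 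Hence only $k_1=0$ survives, and the fields are $x_1$-independent on each side.

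Solving the $k_1=0$ case leads to the ansatz $\vvE^{(i)}=a^{(i)}w(x_2,x_3)\vve_1$ and $\vvH^{(i)}=-(i\lambda\mu^{(i)})^{-1}a^{(i)}(0,\partial_3w,-\partial_2w)$. Continuity of $\vvH_\tau$ forces $a^{(i)}=c\,\mu^{(i)}$. The boundary condition on $\Gamma_2^\pm,\Gamma_3^\pm$ gives $w\in H^1_0((0,1)^2)$, and the remaining curl equation becomes $-\Delta w=\nu\lambda^2w$, again using $\epsilon\mu=\nu$. Thus $\statespace_{\mathrm{u}}$ is spanned by
\[
\vvTheta_{w}^\pm=\Bigl(\mu w\,\vve_1,\ \mp\tfrac{1}{i\lambda_w}(0,\partial_3w,-\partial_2w)\Bigr),
\]
where $w$ runs over the Dirichlet eigenfunctions of the square. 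The main obstacle is this completeness step: making the separation of variables rigorous for piecewise fields with the interface, and excluding exotic eigenfunctions that are not of product form. I would handle it by projecting an arbitrary unobserved eigenfield onto each transverse basis function, which is legitimate thanks to the $PH^1$ regularity, and then treating the resulting finite ODE system.

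Finally, I translate the orthogonality. Taking sums and differences of $\vvTheta_w^+$ and $\vvTheta_w^-$, the condition $\vvu_0\perp\statespace_{\mathrm{u}}$ is equivalent to two families of identities. The first is $\int_Q\nu(\vvE_0\cdot\vve_1)w=0$ for all $w$, i.e.\ $\int_{-1}^1\vvE_0\cdot\vve_1\,\mathrm{d}x_1\perp w$ in $L^2((0,1)^2)$. Since the Dirichlet eigenfunctions are complete in $L^2$, this gives the first condition in \eqref{eq:FirstOrderMomentConditions}. The second is $\int_{(0,1)^2}(h_2\partial_3w-h_3\partial_2w)=0$ for all eigenfunctions $w$. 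Since the eigenfunctions span a dense subspace of $H^1_0((0,1)^2)\supset C_c^\infty$, this family is equivalent to $\partial_2h_3-\partial_3h_2=0$ in $\mathcal{D}'$.
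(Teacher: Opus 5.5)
Your proposal is correct, but it identifies \(\statespace_{\mathrm{u}}\) by a different route than the paper. Your reduction to the closed span of all imaginary-axis eigenfunctions of \(M\) is justified by \eqref{eq:FirstOrderUnobservedSubspace} together with the inclusion shown in the proof of Lemma~\ref{lem:SemigroupDecomposition}.

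\textbf{The paper's route.} It works with the reduced cavity operator \(A_{\redstatespace}\) on the whole cuboid, which requires several steps:
\begin{enumerate}[label=(\roman*)]
\item it introduces \(B_{\rm Dir}\) and \(B_{\rm Neu}\) with their characteristic equation and builds the TE/TM modes;
\item it proves in Theorem~\ref{thm:completeness} that these modes form an orthogonal basis of \(\redstatespace\);
\item in Lemma~\ref{lem:UnobservedModes} it uses \(\zeta\notin\mathbb{Q}\) to get \(w_n(\zeta)\neq0\) and \(d_n\neq0\) for \(n\geq1\), and checks that tangential traces inside each eigenspace are mutually orthogonal, so they cannot cancel;
\item only then does Corollary~\ref{cor:FirstOrderSpectrumOnAxis} transfer the result to eigenfunctions of \(M\).
\end{enumerate}

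\textbf{Your route.} You impose \(\vvE_\parallel=0\) directly on an arbitrary imaginary-axis eigenfunction. Then \(Q_1\) and \(Q_2\) become two independent perfectly conducting boxes, coupled only through continuity of \(\vvH_\tau\) (and of \(\epsilon\,\vvE\cdot\vve_1\), which follows from the equation). A transverse projection gives the quantization \(k_1\in\frac{\pi}{1+\zeta}\N\cap\frac{\pi}{1-\zeta}\N\), which is empty for irrational \(\zeta\). The Cauchy-data argument excludes modes living on one side only. This leaves the \(x_1\)-independent TM fields \(\mu w\,\vve_1\), which agree with \eqref{eq:UnobservedTMMode} since \(\mu=\nu\epsilon^{-1}\).

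\textbf{Comparison.} Your argument is shorter. It needs no completeness theorem for the interface operator and no characteristic equation. It also needs no no-cancellation argument, because you impose the trace condition before decomposing into any basis. What you must supply is rigor for the transverse projection and for unique continuation. Both are routine: \(D(M)\hookrightarrow PH^1(Q)^6\), and for a fixed transverse index the tangential components satisfy a first-order linear ODE system in \(x_1\). The paper's route, in return, yields the full spectral decomposition of the undamped interface cavity, including the observed modes. That fits the collocated-feedback framework of Section~\ref{sec:Stability} and would be needed for any finer analysis.
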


The moment condition on the electric field admits a simple physical interpretation.
For fixed transverse position \((x_2,x_3)\in(0,1)^2\), the segment \(x_1\mapsto (x_1,x_2,x_3)\) connects the two perfectly conducting faces \(\monosetc{x_1 = -1}\) and \(\monosetc{x_1 = 1}\) of the cavity, and it crosses the interface \(F\) orthogonally.
The quantity \(\int_{-1}^{1}\vvE_0\cdot\vve_1\,\mathrm{d}x_1\) is thus the line integral of the initial electric field along this segment, i.e., the initial \emph{voltage} between the two plates measured at \((x_2,x_3)\).
The first condition in \eqref{eq:FirstOrderMomentConditions} therefore states that the initial state carries no voltage across the cavity, for almost every transverse position.
This is precisely the information that the damping cannot detect: the interface feedback in \eqref{eq:MaxwellOperatorDomain} acts on the tangential component \(\vvE_\parallel\) only, whereas the unobserved modes \(\vvE^{\mathrm{TM}}_{0,m_2,m_3}\) of Lemma~\ref{lem:UnobservedModes} are proportional to \(\epsilon^{-1}(x_1)\vve_1\) at each transverse position, like the field of a plate capacitor containing two dielectric layers.
Such a field passes through \(F\) without ever producing a tangential trace, so its energy is not dissipated.

We prove Proposition~\ref{prop:Decomp} in the remainder of this section and Theorem~\ref{thm:FirstOrderStability} in Section~\ref{sec:CavityEigenmodes}. We briefly outline our strategy.
We adapt the ideas of \cite{Amm.Nic.2015}, developed there for second-order systems with collocated feedback, and derive an observability criterion for the eigenmodes of the undamped problem, working directly with the Maxwell operator \(M\) from \eqref{eq:MaxwellOperator}.
The hidden regularity of Section~\ref{sec:Regularity} yields compactness of the resolvent of \(M\) on all of \(\statespace\), so that the long-time behavior is governed by the eigenvalues of \(M\) on the imaginary axis.
For the corresponding eigenfunctions, the dissipation forces the tangential trace of the electric field to vanish on \(F\).
They then carry no surface charge, and the eigenvalue equation for \(M\) reduces to the eigenvalue problem for \(\epsilon^{-1}\curl\mu^{-1}\curl\) on divergence-free fields.
We realize the latter through an auxiliary self-adjoint operator \(A_{\redstatespace}\) with compact resolvent and split its eigenmodes into \emph{observed} ones and \emph{unobserved} ones, whose tangential component vanishes on \(F\) so that they are invisible to the damping.
The unobserved modes lift to an orthonormal system of eigenvectors of \(M\) and of its adjoint, spanning a subspace on which the evolution is unitary, whereas the Arendt--Batty--Lyubich--V\~u theorem yields strong stability on the orthogonal complement.
The decaying initial values are therefore precisely those orthogonal to the lifted unobserved modes, and computing the unobserved modes explicitly proves Theorem~\ref{thm:FirstOrderStability}.

\subsection*{Spectral properties of the Maxwell operator}

We begin with the spectral properties of the Maxwell operator \(M\) from \eqref{eq:MaxwellOperator}.
The key ingredient of our analysis is the hidden regularity of Section~\ref{sec:Regularity}, which yields compactness of the resolvent of \(M\).

\begin{lem}\label{lem:FirstOrderCompactResolvent}
    The Maxwell operator \(M\) has compact resolvent.
    In particular, \(\operatorname{spec}(M)\) consists of at most countably many isolated eigenvalues with finite algebraic multiplicity.
\end{lem}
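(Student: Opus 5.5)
The plan is to factor the resolvent of \(M\) through \(PH^1(Q)^6\) and then use Rellich's theorem on each sub-cube. By Theorem~\ref{thm:Existence}, \(M\) generates a contraction semigroup, so \(1\in\rho(M)\). The resolvent \((I-M)^{-1}\colon\statespace\to D(M)\) is bounded when \(D(M)\) carries the graph norm of \(M\). It therefore suffices to show that the embedding \(D(M)\hookrightarrow\statespace\) is compact. Compactness for one resolvent point then gives compactness for all \(\lambda\in\rho(M)\) by the resolvent identity.

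The first step is to compare the graph norm of \(M\) with the energy norm \(\|\cdot\|_{\energyspace}\) on \(D(M)\). For \((\vvE,\vvH)\in D(M)\) we have
\[
\|(\vvE,\vvH)\|_{\energyspace}^2\lesssim \|(\vvE,\vvH)\|_{\statespace}^2+\|M(\vvE,\vvH)\|_{\statespace}^2+\|\vvE_\parallel\|_{L^2(F)^2}^2,
\]
because \(\|\jmp{\vvH_\tau}\|_{L^2(F)^2}=\sigma\|\vvE_\parallel\|_{L^2(F)^2}\) by the transmission condition in \eqref{eq:MaxwellOperatorDomain}. Corollary~\ref{cor:OperatorIsDissipative} bounds the boundary term:
\[
\sigma\|\vvE_\parallel\|_{L^2(F)^2}^2=-\bigl(M\vvu,\vvu\bigr)_{\statespace}\le \|M\vvu\|_{\statespace}\|\vvu\|_{\statespace}.
\]
Hence the graph norm of \(M\) and the energy norm are equivalent on \(D(M)\).

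The second step combines this equivalence with Theorem~\ref{thm:FirstRegularEmbedding}. That theorem gives \(\|(\vvE,\vvH)\|_{PH^1(Q)^6}\lesssim\|(\vvE,\vvH)\|_{\energyspace}\), so \(D(M)\), with its graph norm, embeds continuously into \(PH^1(Q)^6\). On each Lipschitz sub-cube \(Q_i\), the embedding \(H^1(Q_i)\hookrightarrow L^2(Q_i)\) is compact by Rellich's theorem. Consequently \(PH^1(Q)^6\hookrightarrow L^2(Q)^6\) is compact. Since \(\statespace\) is a closed subspace of \(L^2(Q)^6\) with an equivalent, weighted norm, the embedding \(D(M)\hookrightarrow\statespace\) is compact.

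The spectral statement then follows from standard theory for closed operators with compact resolvent. The resolvent is compact and \(\rho(M)\neq\emptyset\), so the spectrum consists of at most countably many isolated eigenvalues. Each has a Riesz projection of finite rank, which gives finite algebraic multiplicity (see, e.g., Kato's perturbation theory). The step I expect to be the main obstacle is already done in Theorem~\ref{thm:FirstRegularEmbedding}. The only remaining subtlety is recovering the interface term \(\|\vvE_\parallel\|_{L^2(F)}\) from the graph norm, and dissipativity handles that.
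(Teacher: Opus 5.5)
Your proof is correct and follows the paper's argument. The resolvent is bounded into \(D(M)\), Theorem~\ref{thm:FirstRegularEmbedding} embeds \(D(M)\) into \(PH^1(Q)^6\), and Rellich--Kondrachov applies on each \(Q_i\). Your dissipativity estimate, which shows that the graph norm of \(M\) controls \(\|\vvE_\parallel\|_{L^2(F)^2}\) and hence the energy norm, makes explicit a step the paper leaves implicit when it invokes Theorem~\ref{thm:FirstRegularEmbedding} with respect to the graph norm.
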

\begin{proof}
    By Theorem~\ref{thm:Existence} we have \(1\in\rho(M)\), and \((I-M)^{-1}\,:\,\statespace\to D(M)\) is bounded with respect to the graph norm by the closed graph theorem.
    Theorem~\ref{thm:FirstRegularEmbedding} provides the continuous embedding \(D(M)\hookrightarrow PH^1(Q)^6\) with respect to the graph norm.
    The Rellich--Kondrachov theorem on \(Q_i\) for \(i=\monosetc{1,2}\) shows that \(PH^1(Q)^6\hookrightarrow L^2(Q)^6\) is compact.
    Furthermore \(\statespace\) is a closed subspace of \(L^2(Q)^6\).
    Hence, \((I-M)^{-1}\) is compact on \(\statespace\).
    
    Discreteness of the spectrum is a standard consequence of the compact resolvent, see, e.g.,  \cite[Cor.~IV.1.19]{Eng.Nag.2000}.
\end{proof}

The next lemma shows that the first-order system possesses no equilibria.
\begin{lem}\label{lem:FirstOrderKernel}
    It holds that \(\kernel(M) = \monosetc{0}\), 
    hence \(0\in \rho(M)\)
\end{lem}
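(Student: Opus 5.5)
Let \(\vvu = (\vvE,\vvH)\in D(M)\) with \(M\vvu = 0\). The plan is to show successively that \(\vvE_\parallel=0\) on \(F\), that \(\vvH=0\), and finally that \(\vvE=0\). The invertibility \(0\in\rho(M)\) then follows from Lemma~\ref{lem:FirstOrderCompactResolvent}: the spectrum consists of eigenvalues only, so injectivity suffices.

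First, Corollary~\ref{cor:OperatorIsDissipative} gives \(0 = (M\vvu,\vvu)_{\statespace} = -\sigma\|\vvE_\parallel\|^2_{L^2(F)^2}\), hence \(\vvE_\parallel = 0\). By the transmission condition in \eqref{eq:MaxwellOperatorDomain}, also \(\jmp{\vvH_\tau}=0\).

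Next, the equation \(M\vvu=0\) means \(\pcurl\vvH = 0\) and \(\curl\vvE = 0\). For the magnetic field, \(\vvH\in\vvV^\imp_{T,\mu}(Q)\) with \(\pcurl\vvH=0\) and \(\jmp{\vvH_\tau}=0\), so the Weber inequality of Lemma~\ref{lem:HWeber} yields \(\vvH = 0\) immediately.

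For the electric field, \(\vvE\in\vvV^\imp_{N,\epsilon}(Q)\) with \(\curl\vvE=0\) and \(\vvE_\parallel=0\), so Lemma~\ref{lem:EWeber} gives \(\|\vvE\|_{L^2}\le C(\|\curl\vvE\|+\|\vvE_\parallel\|)=0\). As a sanity check, the same conclusion follows directly. Since \(Q\) is simply connected with connected boundary, \(\curl\vvE=0\) and \(\vvE\times\vvnu=0\) give \(\vvE=\grad\phi\) with \(\phi\in H^1_0(Q)\). The piecewise divergence condition gives \(\div(\epsilon^{(i)}\grad\phi^{(i)})=0\) in each \(Q_i\). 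Moreover, \(\vvE_\parallel=\grad_F\phi|_F=0\) together with \(\phi|_F\in H^1_0(F)\) forces \(\phi|_F=0\). Hence \(\phi^{(i)}\) is harmonic in \(Q_i\) with zero trace on all of \(\partial Q_i\), so \(\phi=0\).

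The main obstacle is only the subtle point that the surface charge \(\jmp{\epsilon\vvE_\perp}\) is a priori unconstrained in the state space. A nonzero field \(\grad\phi\) with \(\phi\) solving the transmission problem \eqref{eq:ETransmissionProblem} for nonzero data would be curl-free and piecewise divergence-free. Such a field is excluded precisely because \(\vvE_\parallel=0\) forces \(\phi|_F=0\), via Lemma~\ref{lem:RellichInequality}\ref{lem:RellichInequalityInterface} or Corollary~\ref{cor:ETraceinequalities}, which gives \(\jmp{\epsilon\vvE_\perp}=0\). This is exactly what Lemma~\ref{lem:EWeber} encodes.
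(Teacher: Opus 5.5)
Your proof is correct and matches the paper's: dissipativity forces \(\vvE_\parallel=0\), hence \(\jmp{\vvH_\tau}=0\), both fields then vanish, and the compact resolvent gives \(0\in\rho(M)\). The only cosmetic difference is that you invoke the Weber inequalities of Lemmas~\ref{lem:HWeber} and~\ref{lem:EWeber}. The paper instead applies Corollary~\ref{cor:ETraceinequalities} to get \(\jmp{\epsilon\vvE_\perp}=0\) and then uses that \(\vvH_0(\curl=0,Q)\cap\vvH(\div_\epsilon=0,Q)\) and \(\vvH(\curl=0,Q)\cap\vvH_0(\div_\mu=0,Q)\) are trivial, which is exactly what those Weber inequalities package.
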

\begin{proof}
    Let \(\bigl(\vvE,\vvH\bigr)\in \kernel(M)\), i.e., \(\curl\vvE = 0\) and \(\pcurl\vvH = 0\).
    By the dissipativity in Corollary~\ref{cor:OperatorIsDissipative}, we conclude that
    \begin{equation*}
        \sigma \|\vvE_\parallel\|_{L^2(F)^2}^2
            = -\Re \Bigl(M\bigl(\vvE,\vvH\bigr), \bigl(\vvE,\vvH\bigr)\Bigr)_{\statespace}
            = 0.
    \end{equation*}
    Corollary~\ref{cor:ETraceinequalities} then yields
    \[
        \|\jmp{\epsilon\vvE_\perp}\|_{L^2(F)}
        \lesssim \|\vvE_\parallel\|_{L^2(F)^2}
            + \|\curl \vvE\|_{L^2(Q)^3} = 0.
    \]
    The field \(\vvE\) thus belongs to 
    \[
        \vvH_0(\curl = 0,Q) \cap \vvH(\div_\epsilon = 0, Q) = \monosetc{0},
    \]
    where the latter space is known to reduce to zero.
    Moreover, as \(\jmp{\vvH_\tau} = -\sigma\vvE_\parallel = 0\), we similarly obtain
    \[
        \vvH \in \vvH(\curl = 0,Q) \cap \vvH_0(\div_\mu = 0,Q) = \monosetc{0}.
    \]
    Lemma~\ref{lem:FirstOrderCompactResolvent} shows that \(0\in \rho(M)\).
\end{proof}

The following lemma analyzes the eigenfunctions of \(M\) associated with purely imaginary eigenvalues.
Its main point is that these eigenfunctions carry no surface charge, although the state space \(\statespace\) permits charged fields:
the dissipation forces \(\vvE_\parallel = 0\), the interface condition then removes the tangential jump of \(\vvH\), and the eigenvalue equation transfers the resulting global regularity to \(\vvE\).
In particular, the electric field solves an eigenvalue problem for \(\epsilon^{-1}\curl\mu^{-1}\curl\) with homogeneous transmission conditions, which is the starting point for the analysis in the next subsection.

\begin{lem}\label{lem:FirstOrderImaginaryEigenvalues}
    Let \(\beta\in\R\setminus\monosetc{0}\) and \(\bigl(\vvE,\vvH\bigr)\in D(M)\setminus\monosetc{0}\) with \(M\bigl(\vvE,\vvH\bigr) = i\beta\bigl(\vvE,\vvH\bigr)\).
    We then have \(\vvE_\parallel = 0\), \(\jmp{\vvH_\tau} = 0\) on  \(F\), and
    \(\vvE\in \vvH_0(\curl,Q)\cap\vvH(\div_\epsilon = 0, Q)\) with
    \begin{equation}\label{eq:EEigenvalueEquations}
        \epsilon^{-1}\curl\mu^{-1}\curl\vvE = \beta^2\vvE \quad\text{in } L^2(Q)^3,
        \qquad
        \vvH = \tfrac{i}{\beta}\mu^{-1}\curl\vvE.
    \end{equation}
    In particular, \(\vvE \neq 0\) in  \(Q\)  and \(\jmp{\epsilon\vvE_\perp} = 0\) on  \(F\), i.e., the eigenfunction is surface charge-free.
\end{lem}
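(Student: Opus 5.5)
The plan is to first extract the vanishing of the tangential trace from dissipativity, and then upgrade regularity step by step. Since \(M(\vvE,\vvH) = i\beta(\vvE,\vvH)\), taking the real part of the inner product with \((\vvE,\vvH)\) and using Corollary~\ref{cor:OperatorIsDissipative} gives
\[
0 = \Re\bigl(i\beta\|(\vvE,\vvH)\|^2_{\statespace}\bigr) = -\sigma\|\vvE_\parallel\|^2_{L^2(F)^2}.
\]
Hence \(\vvE_\parallel = 0\), and the transmission condition in \eqref{eq:MaxwellOperatorDomain} yields \(\jmp{\vvH_\tau} = -\sigma\vvE_\parallel = 0\). By \eqref{eq:ConnectionToFullSpaces:Curl}, \(\vvH\) then lies in \(\vvH(\curl,Q)\), so \(\pcurl\vvH = \curl\vvH\) globally.

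Next we write out the eigenvalue equations componentwise:
\[
\curl\vvH = i\beta\epsilon\vvE, \qquad -\curl\vvE = i\beta\mu\vvH.
\]
Since \(\vvH\in\vvH(\curl,Q)\), the field \(\curl\vvH\) belongs to \(\vvH(\div=0,Q)\) on all of \(Q\), that is, without a jump of the normal trace. The first equation, together with \(\beta\neq0\), therefore gives \(\epsilon\vvE = (i\beta)^{-1}\curl\vvH\in\vvH(\div=0,Q)\). By \eqref{eq:ConnectionToFullSpaces:Div} this means \(\jmp{\epsilon\vvE_\perp}=0\) and \(\vvE\in\vvH(\div_\epsilon=0,Q)\). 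We already have \(\vvE\in\vvH_0(\curl,Q)\) from \(\energyspace\).

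The second equation gives \(\vvH = \tfrac{i}{\beta}\mu^{-1}\curl\vvE\), since \(-1/(i\beta) = i/\beta\). Substituting into the first equation yields
\[
\tfrac{i}{\beta}\curl\mu^{-1}\curl\vvE = i\beta\epsilon\vvE,
\]
which is \eqref{eq:EEigenvalueEquations}. This identity holds in \(L^2(Q)^3\) because \(\mu^{-1}\curl\vvE\) is proportional to \(\vvH\in\vvH(\curl,Q)\).

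Finally, \(\vvE\neq0\): if \(\vvE=0\), then \(\vvH = \tfrac{i}{\beta}\mu^{-1}\curl\vvE = 0\), contradicting \((\vvE,\vvH)\neq0\).

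The only delicate point is the second paragraph, namely justifying that the global curl of an \(\vvH(\curl,Q)\) field has continuous normal trace. This is standard: for \(q\in C_c^\infty(Q)\) we have \((\curl\vvH,\grad q)=0\), so the divergence vanishes in the distributional sense on all of \(Q\), and \eqref{eq:ConnectionToFullSpaces:Div} then converts this into the vanishing of the normal jump.
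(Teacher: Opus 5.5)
Your proof is correct and follows the paper's argument essentially step by step. Dissipativity forces \(\vvE_\parallel = 0\), and the interface condition then puts \(\vvH\) in \(\vvH(\curl,Q)\). The first eigenvalue equation makes \(\epsilon\vvE\) divergence-free on all of \(Q\), so \(\jmp{\epsilon\vvE_\perp}=0\), and substituting \(\vvH = \tfrac{i}{\beta}\mu^{-1}\curl\vvE\) gives the curl-curl equation and \(\vvE\neq 0\).
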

\begin{proof}
    The eigenvalue equations read as
    \begin{equation}\label{eq:FirstOrderEigenvalueEquations}
        \epsilon^{-1}\pcurl\vvH = i\beta\vvE,
        \qquad
        -\mu^{-1}\curl\vvE = i\beta\vvH.
    \end{equation}
    Taking real parts in Corollary~\ref{cor:OperatorIsDissipative}, we deduce
    \[
        -\sigma\|\vvE_\parallel\|_{L^2(F)^2}^2
            = \Re \bigl(M(\vvE,\vvH),(\vvE,\vvH)\bigr)_{\statespace}
            = \Re i\beta \|(\vvE,\vvH)\|_{L^2(Q)^3}^2 = 0,
    \]
    i.e. \(\vvE_\parallel = 0\) in \(L^2(F)^2\).
    The interface condition in \eqref{eq:MaxwellOperatorDomain} then gives \(\jmp{\vvH_\tau} = 0\), and \eqref{eq:ConnectionToFullSpaces} shows \(\vvH\in\vvH(\curl,Q)\) with \(\pcurl\vvH = \curl\vvH\).

    The first equation in \eqref{eq:FirstOrderEigenvalueEquations} yields \(\vvE = (i\beta)^{-1}\epsilon^{-1}\curl\vvH\), so that \(\div\epsilon\vvE = 0\) holds distributionally in all of \(Q\).
    In particular, \(\jmp{\epsilon\vvE_\perp} = 0\) by \eqref{eq:ConnectionToFullSpaces} and \(\vvE\in\vvH_0(\curl,Q)\cap\vvH(\div_\epsilon = 0, Q)\).
    Equation \eqref{eq:FirstOrderEigenvalueEquations} implies \(\curl\vvE = -i\beta \mu \vvH\) and then
    \begin{equation*}
        \epsilon^{-1}\curl\mu^{-1}\curl\vvE = -i\beta\epsilon^{-1}\curl\vvH = \beta^2\vvE
        \qquad\text{in } L^2(Q)^3.
    \end{equation*}
    Finally, \(\vvE\neq 0\), since \(\vvE = 0\) would imply \(\vvH = \tfrac{i}{\mu\beta}\curl\vvE= 0\).
\end{proof}

\subsection*{The cavity eigenvalue problem}

By Lemma~\ref{lem:FirstOrderImaginaryEigenvalues}, the electric field of an eigenfunction of \(M\) associated with a purely imaginary eigenvalue is divergence-free in all of \(Q\), including the interface, and solves the eigenvalue problem \eqref{eq:EEigenvalueEquations}.
This motivates the study of this eigenvalue problem on the spaces
\begin{equation}\label{eq:ReducedSpaces}
    \redstatespace = \vvH(\div_\epsilon = 0, Q),
    \qquad
    \redenergyspace = \vvH_0(\curl, Q)\cap \vvH(\div_\epsilon = 0, Q),
\end{equation}
where \(\redstatespace\) carries the \(L_\epsilon^2(Q)^3\)-inner product \(\bigl(\epsilon\cdot,\cdot\bigr)_{L^2(Q)^3}\).
On \(\redenergyspace\), the Weber inequality
\begin{equation}\label{eq:StabilityWeber}
    \|\vvV\|_{L^2(Q)^3} \leq C_W \|\curl\vvV\|_{L^2(Q)^3},
    \qquad \vvV\in \redenergyspace,
\end{equation}
is valid with a constant \(C_W = C_W(Q,\epsilon) > 0\), see, e.g., \cite[Thm.~3.4.3]{Ass.C.L.2018.MFCE}.
Therefore, \(\redenergyspace\) is a Hilbert space with the inner product
\begin{equation}\label{eq:VInnerProduct}
    \bigl(\vvU,\vvV\bigr)_{\redenergyspace} = \bigl(\curl \vvU, \curl\vvV\bigr)_{L^2(Q)^3},
    \qquad \vvU,\vvV \in \redenergyspace.
\end{equation}

In contrast to the situation in Section~\ref{sec:WellPosedness}, where the \(L^2(F)\)-regularity of the tangential component had to be encoded into the energy space via \eqref{def:ImprovedRegularityCurlSpaces}, this regularity is automatic on the space \(\redenergyspace\), as the following lemma shows.

\begin{lem}\label{lem:StabilityEmbeddings}
    \begin{enumerate}[label=\normalfont (\alph*)]
        \item\label{lem:StabilityEmbeddings:H1}
            The embedding \(\redenergyspace\hookrightarrow PH^1(Q)^3\) is continuous with \(\|\vvV\|_{PH^1(Q)^3} \lesssim \|\vvV\|_{\redenergyspace}\).
            In particular, the tangential component trace
            \begin{equation*}
                \redenergyspace \to L^2(F)^2,
                \qquad
                \vvV \mapsto \vvV_\parallel,
            \end{equation*}
            is well-defined, bounded, and compact.
        \item\label{lem:StabilityEmbeddings:Compact}
            The embedding \(\redenergyspace\hookrightarrow \redstatespace\) is dense and compact.
    \end{enumerate}
\end{lem}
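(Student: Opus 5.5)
For part \ref{lem:StabilityEmbeddings:H1} I would quote the embedding already used several times in Section~\ref{sec:FunctionSpacesStudy}. By \cite[Prop.~9.3]{Zer.2021}, the space \(\vvH_0(\curl,Q)\cap\vvH(\div_\epsilon = 0,Q) = \redenergyspace\) embeds continuously into \(PH^1(Q)^3\) with respect to its graph norm, which gives
\[
    \|\vvV\|_{PH^1(Q)^3}\lesssim \|\vvV\|_{L^2(Q)^3}+\|\curl\vvV\|_{L^2(Q)^3}.
\]
The Weber inequality \eqref{eq:StabilityWeber} controls the \(L^2\)-term by \(\|\curl\vvV\|_{L^2(Q)^3} = \|\vvV\|_{\redenergyspace}\). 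For the trace, I would use that \(\vvV^{(1)}\in H^1(Q_1)^3\). The trace theorem then gives \(\restrict{\vvV^{(1)}}{F}\in H^{1/2}(F)^3\), and hence \(\vvV_\parallel\in H^{1/2}(F)^2\). This is in fact \(\vvH^{1/2}_{\parallel,00}(F)\) by \eqref{eq:VectorTraceSpace}, and it is continuous in the \(PH^1\)-norm. Since \(\vvV\in\vvH_0(\curl,Q)\), the tangential component is continuous across \(F\) by \eqref{eq:ConnectionToFullSpaces:Curl}, so the side from which the trace is taken does not matter, and it coincides with the \(H^{-1/2}\)-trace. Compactness then follows by factoring the map as \(\redenergyspace\to H^{1/2}(F)^2\hookrightarrow L^2(F)^2\), where the last embedding is compact by Rellich on the bounded Lipschitz set \(F\).

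For part \ref{lem:StabilityEmbeddings:Compact}, compactness comes from factoring through \(PH^1(Q)^3\) using part \ref{lem:StabilityEmbeddings:H1} together with Rellich--Kondrachov on each \(Q_i\), exactly as in Lemma~\ref{lem:FirstOrderCompactResolvent}. Since \(\redstatespace\) is closed in \(L^2(Q)^3\) and the \(\epsilon\)-weighted norm is equivalent to the standard one, the embedding \(\redenergyspace\hookrightarrow\redstatespace\) is compact.

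The step I expect to need the most care is density of \(\redenergyspace\) in \(\redstatespace\). I would argue by orthogonality. Take \(\vvW\in\redstatespace\) with \((\epsilon\vvW,\vvV)_{L^2(Q)^3} = 0\) for all \(\vvV\in\redenergyspace\). For an arbitrary \(\vvU\in C_c^\infty(Q)^3\), I would apply the Helmholtz decomposition \(\vvH_0(\curl,Q) = \redenergyspace\oplus\grad H^1_0(Q)\), which is orthogonal in \((\epsilon\cdot,\cdot)\) (\cite[(6.16)]{Ass.C.L.2018.MFCE}, as used in the proof of Theorem~\ref{thm:HDensity}). This writes \(\vvU = \vvV+\grad q\) with \(\vvV\in\redenergyspace\) and \(q\in H^1_0(Q)\). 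Then
\[
    (\epsilon\vvW,\vvU) = (\epsilon\vvW,\vvV) + (\epsilon\vvW,\grad q) = 0 - \langle\div(\epsilon\vvW),q\rangle = 0,
\]
where the second term vanishes because \(\div(\epsilon\vvW) = 0\) in \(Q\) and \(q\in H^1_0(Q)\). Since \(C_c^\infty(Q)^3\) is dense in \(L^2(Q)^3\), this gives \(\epsilon\vvW = 0\), and hence \(\vvW = 0\).
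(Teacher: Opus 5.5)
Your proof is correct and follows the paper's route. For (a) you use Proposition~9.3 of \cite{Zer.2021}, then the trace theorem and Rellich on $F$. For (b) you get compactness through $PH^1(Q)^3$, and density from the $\epsilon$-weighted Helmholtz splitting $\vvU = \vvP\vvU + \grad\chi$ of test fields $\vvU\in C_c^\infty(Q)^3$, with $\vvP\vvU\in\redenergyspace$. The only differences are cosmetic. You make the Weber-inequality step explicit, which the paper leaves implicit even though it is needed because $\|\cdot\|_{\redenergyspace}$ is the curl norm. You also close the density argument by showing that the annihilator of $\redenergyspace$ is trivial, rather than by noting that $\vvP C_c^\infty(Q)^3$ is dense in $\redstatespace$; the two are equivalent.
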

\begin{proof}
    We prove \ref{lem:StabilityEmbeddings:H1}.
    Proposition~9.3 in \cite{Zer.2021} shows the continuous embedding \(\redenergyspace \hookrightarrow PH^1(Q)^3\).
    The trace theorem then provides boundedness of \(\vvV \mapsto \vvV_\parallel \in H^{1/2}(F)^2\), and the Rellich--Kondrachov theorem shows compactness of \(H^{1/2}(F)^2\hookrightarrow L^2(F)^2\).

    For part \ref{lem:StabilityEmbeddings:Compact}, the compactness follows from \ref{lem:StabilityEmbeddings:H1} and the Rellich--Kondrachov theorem, since \(\redstatespace\) is a closed subspace of \(L^2(Q)^3\).
    It remains to show density.
    Note that \(\redstatespace\) is the orthogonal complement of \(\grad H^1_0(Q)\) in \(L_\epsilon^2(Q)^3\), and denote the corresponding orthogonal projection by \(\vvP\,:\, L_\epsilon^2(Q)^3\to \redstatespace\), cf. \cite[Prop.~6.1.12]{Ass.C.L.2018.MFCE}.
    For \(\vvU\in C^\infty_c(Q)^3\), it holds that \(\vvP\vvU = \vvU - \grad\chi\), where \(\chi\in H^1_0(Q)\) is the unique weak solution of
    \begin{equation}\label{eq:StabilityProjection}
        \bigl(\grad \chi, \grad v\bigr)_{L_\epsilon^2(Q)^3} = \bigl(\vvU,\grad v\bigr)_{L_\epsilon^2(Q)^3},
        \qquad \text{for all } v\in H^1_0(Q).
    \end{equation}
    Indeed, \eqref{eq:StabilityProjection} precisely states that \(\div\epsilon(\vvU - \grad\chi) = 0\) distributionally in \(Q\).
    Since \(\grad H^1_0(Q) \subset \vvH_0(\curl, Q)\cap\kernel(\curl)\), we conclude that \(\vvP \vvU\) belongs to \(\redenergyspace\).
    The density of \(C^\infty_c(Q)^3\) in \(L^2(Q)^3\) and the continuity of \(\vvP\) then imply that \(\vvP C^\infty_c(Q)^3 \subset \redenergyspace\) is dense in \(\redstatespace\).
\end{proof}

On \(\redenergyspace\), we consider the sesquilinear form
\begin{equation}\label{eq:StabilityForm}
    a\,:\, \redenergyspace\times \redenergyspace \to \C,
    \qquad
    a\bigl(\vvU,\vvV\bigr) = \bigl(\mu^{-1}\curl \vvU,\curl\vvV\bigr)_{L^2(Q)^3}, 
\end{equation}
which is densely defined in \(\redstatespace\) by Lemma~\ref{lem:StabilityEmbeddings}~\ref{lem:StabilityEmbeddings:Compact}, symmetric, closed, and coercive by the Weber inequality \eqref{eq:StabilityWeber}.
The associated operator
\begin{equation}\label{eq:AH}
    D(A_{\redstatespace}) = \setc[\big]{\vvU\in \redenergyspace}{\exists\, \vvF\in \redstatespace \text{ with } a\bigl(\vvU,\vvV\bigr) = \bigl(\vvF,\vvV\bigr)_{\redstatespace} \text{ for all } \vvV\in \redenergyspace},
    \qquad A_{\redstatespace}\vvU = \vvF,
\end{equation}
is therefore self-adjoint and positive definite.
Moreover, \(A_{\redstatespace}^{-1}\,:\, \redstatespace\to \redenergyspace\) is bounded by coercivity and the embedding \(\redenergyspace\hookrightarrow \redstatespace\) is compact, so that the operator \(A_{\redstatespace}\) has compact resolvent.

The following lemma identifies the eigenfunctions of \(A_{\redstatespace}\) with the classical Maxwell cavity eigenmodes of \(Q\).
They satisfy homogeneous transmission conditions at the material interface \(F\).
\begin{lem}\label{lem:CavityEigenmodes}
    Let \(\vvPhi\in \redenergyspace\setminus\monosetc{0}\) and \(\beta > 0\).
    Then \(\vvPhi \in D(A_{\redstatespace})\) with \(A_{\redstatespace}\vvPhi = \beta^2\vvPhi\) if and only if
    \begin{equation}\label{eq:CavityEigenproblem}
        \epsilon^{-1}\curl\mu^{-1}\curl\vvPhi = \beta^2 \vvPhi \quad\text{in } L^2(Q)^3.
    \end{equation}
\end{lem}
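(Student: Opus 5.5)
We prove the two implications separately, working throughout with the form \(a\) from \eqref{eq:StabilityForm} and the definition \eqref{eq:AH} of \(A_{\redstatespace}\). The main tool is a Helmholtz decomposition, which lets us pass between test fields in \(\redenergyspace\) and arbitrary test fields in \(\vvH_0(\curl,Q)\).

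\emph{(\(\Rightarrow\)).} Let \(\vvPhi\in D(A_{\redstatespace})\) with \(A_{\redstatespace}\vvPhi = \beta^2\vvPhi\). Then
\[
    \bigl(\mu^{-1}\curl\vvPhi,\curl\vvV\bigr)_{L^2(Q)^3} = \beta^2\bigl(\epsilon\vvPhi,\vvV\bigr)_{L^2(Q)^3}
    \qquad \text{for all } \vvV\in\redenergyspace.
\]
We extend this identity to all \(\vvV\in\vvH_0(\curl,Q)\). Use the decomposition \(\vvH_0(\curl,Q) = \redenergyspace\oplus\grad H^1_0(Q)\), which is orthogonal in \(L^2_\epsilon\) (cf.\ \cite[(6.16)]{Ass.C.L.2018.MFCE} with weight \(\epsilon\)). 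For \(\vvV = \grad q\) with \(q\in H^1_0(Q)\), the left-hand side vanishes because \(\curl\grad q = 0\). The right-hand side vanishes because \(\div(\epsilon\vvPhi) = 0\) in all of \(Q\) and \(q\) has zero trace, so \((\epsilon\vvPhi,\grad q)_{L^2(Q)^3} = 0\). Taking in particular \(\vvV\in C_c^\infty(Q)^3\), the identity says that the distribution \(\curl\mu^{-1}\curl\vvPhi\) equals \(\beta^2\epsilon\vvPhi\in L^2(Q)^3\). Hence \(\mu^{-1}\curl\vvPhi\in\vvH(\curl,Q)\), and dividing by \(\epsilon\) gives \eqref{eq:CavityEigenproblem} in \(L^2(Q)^3\). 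No interface terms appear: the \(\curl\) is taken over all of \(Q\), so the homogeneous transmission conditions are encoded automatically.

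\emph{(\(\Leftarrow\)).} Assume \(\vvPhi\in\redenergyspace\setminus\{0\}\) satisfies \eqref{eq:CavityEigenproblem}. Then \(\vvW := \mu^{-1}\curl\vvPhi\) lies in \(\vvH(\curl,Q)\), with \(\curl\vvW = \beta^2\epsilon\vvPhi\). For \(\vvV\in\redenergyspace\subset\vvH_0(\curl,Q)\), the Green formula for \(\vvW\in\vvH(\curl,Q)\) against \(\vvV\in\vvH_0(\curl,Q)\) holds without boundary term (\cite[Sec.~2.2]{Ass.C.L.2018.MFCE}). It gives
\[
    (\vvW,\curl\vvV)_{L^2(Q)^3} = (\curl\vvW,\vvV)_{L^2(Q)^3} = \beta^2(\epsilon\vvPhi,\vvV)_{L^2(Q)^3},
\]
that is, \(a(\vvPhi,\vvV) = (\beta^2\vvPhi,\vvV)_{\redstatespace}\). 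Since \(\beta^2\vvPhi\in\redstatespace\), the definition \eqref{eq:AH} yields \(\vvPhi\in D(A_{\redstatespace})\) with \(A_{\redstatespace}\vvPhi = \beta^2\vvPhi\).

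The only step that needs care is the extension in (\(\Rightarrow\)) from test fields in \(\redenergyspace\) to all of \(\vvH_0(\curl,Q)\). It relies on two facts: the Helmholtz splitting of \(\vvH_0(\curl,Q)\) with the weight \(\epsilon\), and the observation that \(\div(\epsilon\vvPhi) = 0\) holds globally on \(Q\), not just piecewise.
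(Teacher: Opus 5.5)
Your proof is correct and essentially the same as the paper's. For (\(\Rightarrow\)) the paper also tests with the \(L^2_\epsilon\)-Helmholtz projection \(\vvP\vvU = \vvU - \grad\chi \in \redenergyspace\), but only for \(\vvU\in C_c^\infty(Q)^3\), using \(\curl\grad\chi = 0\) and \((\epsilon\vvPhi,\grad\chi)_{L^2(Q)^3}=0\) — your decomposition restricted to smooth test fields. The converse is the same Green formula against \(\vvV\in\vvH_0(\curl,Q)\).
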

\begin{proof}
    Let \(A_{\redstatespace} \vvPhi = \beta^2\vvPhi\) and \(\vvU\in C^\infty_c(Q)^3\) be arbitrary.
    With \(\vvP\,:\, L_\epsilon^2(Q)^3\to \redstatespace\) from the proof of Lemma~\ref{lem:StabilityEmbeddings}, the field \(\vvP\vvU = \vvU - \grad\chi\) belongs to \(\redenergyspace\) with \(\chi\in H^1_0(Q)\).
    Hence, we obtain
    \begin{equation*}
        \bigl(\mu^{-1}\curl\vvPhi, \curl \vvU\bigr)_{L^2(Q)^3}
            = a\bigl(\vvPhi, \vvP\vvU\bigr)
            = \beta^2 \bigl(\vvPhi, \vvP\vvU\bigr)_{L_\epsilon^2(Q)^3}
            = \beta^2 \bigl(\epsilon\vvPhi, \vvU\bigr)_{L^2(Q)^3},
    \end{equation*}
    as \(\curl\grad\chi = 0\) and \(\vvP\) is a projection with respect to the \(L_\epsilon^2(Q)^3\) product.
    Equation \eqref{eq:CavityEigenproblem} is thus satisfied in the sense of distributions on all of \(Q\), with right-hand side in \(L^2(Q)^3\).

    Conversely, let \eqref{eq:CavityEigenproblem} hold.
    Then \(\mu^{-1}\curl\vvPhi\in\vvH(\curl,Q)\), and integration by parts shows
    \begin{equation*}
        a\bigl(\vvPhi,\vvU\bigr)
            = \bigl(\curl\mu^{-1}\curl\vvPhi, \vvU\bigr)_{L^2(Q)^3}
            = \beta^2\bigl(\vvPhi,\vvU\bigr)_{L_\epsilon^2(Q)^3},
        \qquad \text{ for all } \vvU \in \redenergyspace \subset \vvH_0(\curl, Q).
    \end{equation*}
    Since \(\beta^2\vvPhi\in \redstatespace\), the definition \eqref{eq:AH} yields \(\vvPhi\in D(A_{\redstatespace})\) with \(A_{\redstatespace}\vvPhi = \beta^2\vvPhi\).
\end{proof}
Note that \eqref{eq:CavityEigenproblem} implies that
\begin{equation}\label{eq:CavityEigenproblemjumpcond}
    \jmp{(\mu^{-1}\curl\vvPhi)_\tau} = 0 \qquad \text{on }  F.
\end{equation}

Since \(A_{\redstatespace}\) is self-adjoint and positive definite with compact resolvent, the spectral theorem provides a countable set of eigenpairs
\begin{equation}\label{eq:EigenpairSet}
    \mathcal{P} = \setc[\big]{\bigl(\beta_j^2, \vvPhi_j\bigr)}{j\in \N},
    \qquad
    A_{\redstatespace} \vvPhi_j = \beta_j^2 \vvPhi_j,\quad \vvPhi_j\neq 0,\quad \beta_j > 0,
\end{equation}
where the eigenvalues are repeated according to their multiplicity and the eigenfunctions form an orthogonal basis of \(\redstatespace\).
The Weber inequality \eqref{eq:StabilityWeber} and the weighted Rayleigh quotient yield the spectral gap \(\beta_j^2 \geq (C_W^2\|\epsilon\|_{L^\infty(Q)}\|\mu\|_{L^\infty(Q)})^{-1}>0\) for all \(j\in \N\).
By Lemma~\ref{lem:StabilityEmbeddings}~\ref{lem:StabilityEmbeddings:H1}, every eigenfunction possesses a tangential trace \(\bigl(\vvPhi_j\bigr)_\parallel\in L^2(F)^2\), through which it is seen by the interface damping.
We choose the eigenbasis adapted to the damping, that is, within each eigenspace we first fix an orthogonal basis of the closed subspace \(\kernel(A_{\redstatespace} - \beta^2)\cap\setc{\vvPhi\in \redenergyspace}{\vvPhi_\parallel = 0}\) and complete it to an orthogonal basis of \(\kernel(A_{\redstatespace}-\beta^2)\) of eigenvalues \(i\beta \in i\R\).
Moreover, we rescale the eigenfunctions such that
\begin{equation}\label{eq:EigenfunctionNormalization}
    \bigl(\vvPhi_j, \vvPhi_i\bigr)_{\redstatespace} = \frac{1}{2\beta_i\beta_j}\,\delta_{ij},
    \qquad i,j\in \N.
\end{equation}
This normalization is chosen such that the lifted eigenfunctions defined in \eqref{eq:LiftedEigenfunctions} below are normalized in \(\statespace\).
By the adapted choice of the basis, we can separate \(\N = J_{\mathrm{o}} \sqcup J_{\mathrm{u}}\) into \emph{observed} and \emph{unobserved} eigenpairs, i.e.,
\begin{equation}\label{eq:ObservedSplit}
    j \in J_{\mathrm{o}} \,\,\Leftrightarrow\,\, \bigl(\vvPhi_j\bigr)_\parallel \neq 0,
    \qquad
    j \in J_{\mathrm{u}} \,\,\Leftrightarrow\,\, \bigl(\vvPhi_j\bigr)_\parallel = 0,
\end{equation}
and it holds that
\begin{equation}\label{eq:AdaptedKernelRepresentation}
    \kernel(A_{\redstatespace} - \beta^2)\cap\setc[\big]{\vvPhi\in \redenergyspace}{\vvPhi_\parallel = 0}
        = \operatorname{span}\setc[\big]{\vvPhi_j}{j\in J_{\mathrm{u}},\, \beta_j = \beta},
    \qquad \beta > 0.
\end{equation}
All eigenpairs of \(\mathcal{P}\) correspond to time-periodic solutions of the undamped Maxwell system.
The observed ones possess a nontrivial tangential trace on \(F\) and are therefore seen by the interface damping, whereas the unobserved ones are invisible to it and give rise to unitary trajectories of the damped semigroup \(\bigl(e^{tM}\bigr)_{t\geq 0}\), as we show in Lemma~\ref{lem:LiftedEigenfunctions} below.
In particular, there is no chance of strong stability on all of \(\statespace\) if \(J_{\mathrm{u}}\neq\emptyset\), and the proof of Proposition~\ref{prop:Decomp} below characterizes the decaying states as those orthogonal to the unobserved modes.

The above results allow us to complete the classification of the imaginary spectrum of \(M\) started in Lemma~\ref{lem:FirstOrderImaginaryEigenvalues}.

\begin{cor}\label{cor:FirstOrderSpectrumOnAxis}
    Let \(\beta\) and \(\bigl(\vvE,\vvH\bigr)\) be as in Lemma~\ref{lem:FirstOrderImaginaryEigenvalues}.
    Then \(\vvE\in D(A_{\redstatespace})\) with \(A_{\redstatespace}\vvE = \beta^2\vvE\), and \(\vvE\) belongs to the span of the unobserved eigenmodes \(\vvPhi_j\) with \(j\in J_{\mathrm{u}}\) and \(\beta_j = |\beta|\).
\end{cor}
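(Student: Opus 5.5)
By Lemma~\ref{lem:FirstOrderImaginaryEigenvalues}, the electric field \(\vvE\) lies in \(\redenergyspace = \vvH_0(\curl,Q)\cap\vvH(\div_\epsilon=0,Q)\). It is nonzero, satisfies \(\vvE_\parallel = 0\) on \(F\), and solves \(\epsilon^{-1}\curl\mu^{-1}\curl\vvE = \beta^2\vvE\) in \(L^2(Q)^3\). The plan is to feed these facts directly into Lemma~\ref{lem:CavityEigenmodes} and then use the adapted choice of the eigenbasis.

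First, apply Lemma~\ref{lem:CavityEigenmodes} with \(\vvPhi = \vvE\) and \(|\beta|>0\) in place of \(\beta\). The eigenvalue equation involves only \(\beta^2 = |\beta|^2\), so the sign of \(\beta\) plays no role, and the lemma gives \(\vvE\in D(A_{\redstatespace})\) with \(A_{\redstatespace}\vvE = \beta^2\vvE\). Thus \(\vvE\) is a nonzero element of \(\kernel(A_{\redstatespace}-|\beta|^2)\). This kernel is finite-dimensional because \(A_{\redstatespace}\) has compact resolvent, and \(\beta^2\) therefore equals some \(\beta_j^2\) from \eqref{eq:EigenpairSet}.

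Second, since also \(\vvE_\parallel = 0\), the field \(\vvE\) lies in the intersection \(\kernel(A_{\redstatespace}-|\beta|^2)\cap\setc{\vvPhi\in\redenergyspace}{\vvPhi_\parallel = 0}\). The identity \eqref{eq:AdaptedKernelRepresentation} states that this intersection equals \(\operatorname{span}\setc{\vvPhi_j}{j\in J_{\mathrm{u}},\,\beta_j = |\beta|}\), which is exactly the claim. If one prefers not to rely on that identity as stated, one can check it directly. The basis of the eigenspace was built by first taking an orthogonal basis of the (closed, finite-dimensional) intersection and then completing it. Under the splitting \eqref{eq:ObservedSplit}, the vectors chosen first have vanishing trace, so their indices lie in \(J_{\mathrm{u}}\). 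The completing vectors are orthogonal to the intersection and hence not in it; being eigenvectors, they must have nonzero trace, so their indices lie in \(J_{\mathrm{o}}\).

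The only delicate point is this bookkeeping: making sure that no combination of \(J_{\mathrm{o}}\) vectors is needed to represent \(\vvE\). That follows from orthogonally decomposing \(\vvE\) within the eigenspace. Its component along the completing vectors is orthogonal to the intersection, yet it also equals \(\vvE\) minus an element of the intersection, so it lies in the intersection itself. Being orthogonal to the intersection and contained in it, this component vanishes.
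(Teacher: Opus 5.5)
Your proposal is correct and follows the argument the paper leaves implicit. Lemma~\ref{lem:FirstOrderImaginaryEigenvalues} places \(\vvE\) in \(\redenergyspace\) with \(\vvE_\parallel = 0\) and the cavity eigenvalue equation, Lemma~\ref{lem:CavityEigenmodes} applied with \(|\beta|\) gives \(A_{\redstatespace}\vvE = \beta^2\vvE\), and \eqref{eq:AdaptedKernelRepresentation} yields the span statement.

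Your final bookkeeping paragraph is harmless but redundant. The intersection is, by construction, spanned by the orthogonal basis chosen first, and those indices lie in \(J_{\mathrm{u}}\).
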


\subsection*{The unobserved subspace}

For \(j\in J_{\mathrm{u}}\), we define the lifted unobserved eigenfunctions
\begin{equation}\label{eq:LiftedEigenfunctions}
    \vvTheta_j^\pm = \bigl(\beta_j\vvPhi_j,\, \pm i\mu^{-1}\curl\vvPhi_j\bigr)\in\statespace.
\end{equation}
They arise from the unobserved eigenmodes via the substitution \(\vvH = \tfrac{i}{\beta}\mu^{-1}\curl\vvE\) of Lemma~\ref{lem:FirstOrderImaginaryEigenvalues}, and the normalization \eqref{eq:EigenfunctionNormalization} renders them orthonormal.

\begin{lem}\label{lem:LiftedEigenfunctions}
    \begin{enumerate}[label=\normalfont (\alph*)]
        \item\label{lem:LiftedEigenfunctions:Eigen}
            For \(j\in J_{\mathrm{u}}\), it holds \(\vvTheta_j^\pm\in D(M)\) with \(M\vvTheta_j^\pm = \pm i\beta_j\vvTheta_j^\pm\).
        \item\label{lem:LiftedEigenfunctions:ONS}
            The family \(\setc{\vvTheta_j^\pm}{j\in J_{\mathrm{u}}}\) is an orthonormal system in \(\statespace\).
        \item\label{lem:LiftedEigenfunctions:Adjoint}
            For \(j\in J_{\mathrm{u}}\) we have \(\vvTheta_j^\pm\in D(M^\ast)\) with \(M^\ast\vvTheta_j^\pm = \mp i\beta_j\vvTheta_j^\pm\).
    \end{enumerate}
\end{lem}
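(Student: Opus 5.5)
For part \ref{lem:LiftedEigenfunctions:Eigen} I would first check that \(\vvTheta_j^\pm\) lies in \(D(M)\), working componentwise. The electric component \(\beta_j\vvPhi_j\) lies in \(\redenergyspace=\vvH_0(\curl,Q)\cap\vvH(\div_\epsilon=0,Q)\). This space is contained in \(\vvH(\pdiv_\epsilon=0,Q)\cap\vvH_0(\curl,Q)\). Since \(j\in J_{\mathrm u}\), we have \((\vvPhi_j)_\parallel=0\in L^2(F)^2\), so this component belongs to \(\vvV^\imp_{N,\epsilon}(Q)\).

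For the magnetic component \(\vvH:=\pm i\mu^{-1}\curl\vvPhi_j\), Lemma~\ref{lem:CavityEigenmodes} gives \eqref{eq:CavityEigenproblem}. Hence \(\curl\vvH=\pm i\beta_j^2\epsilon\vvPhi_j\in L^2(Q)^3\), so \(\vvH\in\vvH(\curl,Q)\). In particular \(\jmp{\vvH_\tau}=0\) by \eqref{eq:CavityEigenproblemjumpcond}, and this is the required \(L^2(F)^2\) regularity. Moreover \(\div(\mu\vvH)=\pm i\div\curl\vvPhi_j=0\). The boundary condition \(\mu\vvH\cdot\vvnu=0\) follows because \(\curl\) maps \(\vvH_0(\curl,Q)\) into \(\vvH_0(\div,Q)\), as already used in the definition of \(M\). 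So \(\vvH\in\vvV^\imp_{T,\mu}(Q)\).

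The transmission condition in \eqref{eq:MaxwellOperatorDomain} then reads \(0=-\sigma\cdot 0\), and \(\vvTheta_j^\pm\in D(M)\). A direct computation gives
\[
M\vvTheta_j^\pm=\bigl(\pm i\epsilon^{-1}\curl\mu^{-1}\curl\vvPhi_j,\,-\beta_j\mu^{-1}\curl\vvPhi_j\bigr)=\bigl(\pm i\beta_j^2\vvPhi_j,\,-\beta_j\mu^{-1}\curl\vvPhi_j\bigr),
\]
which equals \(\pm i\beta_j\vvTheta_j^\pm\) since \((\pm i)^2=-1\).

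For part \ref{lem:LiftedEigenfunctions:ONS} I would compute the inner products
\[
(\vvTheta_j^\pm,\vvTheta_k^{\pm'})_{\statespace}=\beta_j\beta_k(\epsilon\vvPhi_j,\vvPhi_k)+(\pm i)\overline{(\pm' i)}\,(\mu^{-1}\curl\vvPhi_j,\curl\vvPhi_k).
\]
The second term is \(a(\vvPhi_j,\vvPhi_k)=\beta_j^2(\vvPhi_j,\vvPhi_k)_{\redstatespace}\). By \eqref{eq:EigenfunctionNormalization} both terms vanish for \(j\neq k\). For \(j=k\) the first term is \(\tfrac12\), and the second term is \(\pm\tfrac12\): it is \(+\tfrac12\) for equal signs and \(-\tfrac12\) for opposite signs. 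This gives norm one and orthogonality of \(\vvTheta_j^+\) and \(\vvTheta_j^-\). The eigenfunctions are real-valued up to the choice of basis, or the computation can simply be done with the sesquilinear products directly, so nothing more is needed here.

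Part \ref{lem:LiftedEigenfunctions:Adjoint} is where I expect the main care to be needed, because \(D(M^\ast)\) is not given explicitly. I would avoid identifying \(M^\ast\) and argue abstractly instead. For \(\vvu\in D(M)\), dissipativity via Theorem~\ref{cor:PartialIntegrationFormula} polarizes to
\[
(M\vvu,\vvv)+(\vvu,M\vvv)=-2\sigma(\vvE_\parallel,\vvE'_\parallel)_{L^2(F)^2},\qquad \vvu=(\vvE,\vvH),\ \vvv=(\vvE',\vvH')\in D(M).
\]
Taking \(\vvv=\vvTheta_j^\pm\), whose electric tangential trace vanishes, gives
\[
(M\vvu,\vvTheta_j^\pm)=-(\vvu,M\vvTheta_j^\pm)=-(\vvu,\pm i\beta_j\vvTheta_j^\pm)=(\vvu,\mp i\beta_j\vvTheta_j^\pm).
\]
So \(\vvu\mapsto(M\vvu,\vvTheta_j^\pm)\) is bounded on \(\statespace\), hence \(\vvTheta_j^\pm\in D(M^\ast)\) with \(M^\ast\vvTheta_j^\pm=\mp i\beta_j\vvTheta_j^\pm\). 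The only delicate point is justifying the polarized identity, which follows from applying Theorem~\ref{cor:PartialIntegrationFormula} to the pairs \((\vvE,\vvH')\) and \((\vvE',\vvH)\) together with the transmission conditions.
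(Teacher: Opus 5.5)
Your parts (a) and (b) follow the paper's proof essentially verbatim. For (c) you take a different route, and it is correct. The paper argues through the semigroup: from (a) it gets $e^{tM}\vvTheta_j^\pm=e^{\pm i\beta_j t}\vvTheta_j^\pm$. It then uses $\|e^{tM^\ast}\|\le 1$ and the equality case of Cauchy--Schwarz to conclude $e^{tM^\ast}\vvTheta_j^\pm=e^{\mp i\beta_j t}\vvTheta_j^\pm$, and finally differentiates at $t=0$. You instead verify the definition of $D(M^\ast)$ directly through the symmetric part of $M$. Your identity $(M\vvu,\vvv)_{\statespace}+(\vvu,M\vvv)_{\statespace}=-2\sigma(\vvE_\parallel,\vvE'_\parallel)_{L^2(F)^2}$ on $D(M)\times D(M)$ holds. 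The pairs $(\vvE,\vvH')$ and $(\vvE',\vvH)$ lie in the product space $\energyspace$, so Theorem~\ref{cor:PartialIntegrationFormula} applies to each. Alternatively, the identity is just the polarization of $\Re(M\vvu,\vvu)_{\statespace}=-\sigma\|\vvE_\parallel\|_{L^2(F)^2}^2$ on the linear space $D(M)$. The vanishing electric trace of $\vvTheta_j^\pm$ then removes the right-hand side. The paper's argument is purely abstract: any normalized eigenvector of a contraction generator with purely imaginary eigenvalue is an eigenvector of the adjoint with the conjugate eigenvalue. It uses neither Green's formula nor the specific form of the damping. Your argument is more elementary, since it needs no adjoint semigroup and no differentiation step. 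It also gives a slightly stronger structural fact: every $\vvv\in D(M)$ whose electric tangential trace vanishes on $F$ lies in $D(M^\ast)$ with $M^\ast\vvv=-M\vvv$. The price is that it rests on the integration-by-parts formula, and hence on the density results of Theorems~\ref{thm:HDensity} and~\ref{thm:EDensity}.
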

\begin{proof}
    We prove \ref{lem:LiftedEigenfunctions:Eigen}.
    Let \(j\in J_{\mathrm{u}}\).
    Lemma~\ref{lem:CavityEigenmodes} yields \(\epsilon^{-1}\curl\mu^{-1}\curl\vvPhi_j = \beta_j^2\vvPhi_j\) in \(L^2(Q)^3\), so that \(\mu^{-1}\curl\vvPhi_j\in\vvH(\curl,Q)\).
    In particular, \(\jmp{(\mu^{-1}\curl\vvPhi_j)_\tau} = 0\) by \eqref{eq:ConnectionToFullSpaces}.
    Moreover, \(\div\curl\vvPhi_j = 0\) in \(Q\), and \(\curl\vvPhi_j\cdot\vvnu = 0\) on \(\partial Q\) since \(\vvPhi_j\times\vvnu = 0\), see, e.g., \cite[Thm.~3.5.1]{Ass.C.L.2018.MFCE}.
    Together with \(\vvPhi_j\in \redenergyspace\) and \(\bigl(\vvPhi_j\bigr)_\parallel = 0\) for \(j\in J_{\mathrm{u}}\), this shows \(\vvTheta_j^\pm\in\energyspace\) with
    \begin{equation*}
        \jmp{\bigl(\pm i\mu^{-1}\curl\vvPhi_j\bigr)_\tau} = 0 = -\sigma \bigl(\beta_j\vvPhi_j\bigr)_\parallel,
    \end{equation*}
    i.e., \(\vvTheta_j^\pm\in D(M)\).
    We compute
    \begin{equation*}
        M\vvTheta_j^\pm
            = \bigl(\pm i\epsilon^{-1}\curl\mu^{-1}\curl\vvPhi_j,\, -\beta_j\mu^{-1}\curl\vvPhi_j\bigr)
            = \bigl(\pm i\beta_j^2 \vvPhi_j,\, -\beta_j\mu^{-1}\curl\vvPhi_j\bigr)
            = \pm i\beta_j \vvTheta_j^\pm.
    \end{equation*}

    For part \ref{lem:LiftedEigenfunctions:ONS}, we use \(\bigl(\mu^{-1}\curl\vvPhi_j,\curl\vvPhi_i\bigr)_{L^2(Q)^3} = a\bigl(\vvPhi_j,\vvPhi_i\bigr) = \beta_j^2\bigl(\vvPhi_j,\vvPhi_i\bigr)_{\redstatespace}\) and compute
    \begin{equation*}
        \bigl(\vvTheta_j^\pm,\vvTheta_i^\pm\bigr)_{\statespace}
            = \bigl(\beta_i\beta_j + \beta_j^2\bigr)\bigl(\vvPhi_j,\vvPhi_i\bigr)_{\redstatespace},
        \qquad
        \bigl(\vvTheta_j^\pm,\vvTheta_i^\mp\bigr)_{\statespace}
            = \bigl(\beta_i\beta_j - \beta_j^2\bigr)\bigl(\vvPhi_j,\vvPhi_i\bigr)_{\redstatespace}.
    \end{equation*}
    For \(i \neq j\), both expressions vanish by the orthogonality in \eqref{eq:EigenfunctionNormalization}, and for \(i = j\) the second expression is zero.
    Finally, the normalization \eqref{eq:EigenfunctionNormalization} yields
    \(
        \|\vvTheta_j^\pm\|_{\statespace}^2
            = 2\beta_j^2 \|\vvPhi_j\|_{\redstatespace}^2
            = 1
    \).

    We prove \ref{lem:LiftedEigenfunctions:Adjoint}.
    Part \ref{lem:LiftedEigenfunctions:Eigen} implies \(e^{tM}\vvTheta_j^\pm = e^{\pm i\beta_j t}\,\vvTheta_j^\pm\) for \(t\geq 0\).
    Using \(\|\vvTheta_j^\pm\|_{\statespace} = 1\) from part \ref{lem:LiftedEigenfunctions:ONS}, we obtain
    \begin{equation*}
        \bigl(e^{tM^\ast}\vvTheta_j^\pm, \vvTheta_j^\pm\bigr)_{\statespace}
            = \bigl(\vvTheta_j^\pm, e^{tM}\vvTheta_j^\pm\bigr)_{\statespace}
            = e^{\mp i\beta_j t}
    \end{equation*}
    and thus, \(\bigl|\bigl(e^{tM^\ast}\vvTheta_j^\pm,\vvTheta_j^\pm\bigr)_{\statespace}\bigr| = 1\).
    Since the adjoint semigroup \(\bigl(e^{tM^\ast}\bigr)_{t\geq 0} = \bigl((e^{tM})^\ast\bigr)_{t\geq 0}\) is also a contraction semigroup, we have \(\|e^{tM^\ast}\vvTheta_j^\pm\|_{\statespace}\leq 1\) and obtain
    \begin{equation*}
        1 = |\bigl(e^{tM^\ast}\vvTheta_j^\pm, \vvTheta_j^\pm\bigr)_{\statespace}|
        \leq \|e^{tM^\ast}\vvTheta_j^\pm\|_{\statespace} \|\vvTheta_j^\pm\|_{\statespace}
        \leq 1.
    \end{equation*}
    Therefore, equality holds in the Cauchy-Schwarz inequality, which implies linear dependence of the two vectors, and we conclude that
    \begin{equation*}
        e^{tM^\ast} \vvTheta_j^\pm = e^{\mp i\beta_j t}\, \vvTheta_j^\pm,
        \qquad t \geq 0,\,\, j\in J_{\mathrm{u}}.
    \end{equation*}
    Differentiation at \(t = 0\) proves the claim.
\end{proof}

Lemma~\ref{lem:LiftedEigenfunctions} shows that the lifted eigenfunctions form an orthonormal system of eigenvectors for both \(M\) and \(M^\ast\).
We define the unitary subspace of unobserved states as
\begin{equation}\label{eq:FirstOrderUnobservedSubspace}
    \statespace_{\mathrm{u}} = \overline{\operatorname{span}}\setc[\big]{\vvTheta_j^\pm}{j\in J_{\mathrm{u}}},
\end{equation}
with the associated orthogonal projection
\begin{equation}\label{eq:FirstOrderUnobservedProjection}
    \mathcal{P}_{\mathrm{u}}\,:\,\statespace\to\statespace_{\mathrm{u}},
    \qquad
    \mathcal{P}_{\mathrm{u}} \vvu
        = \sum_{j\in J_{\mathrm{u}}}
            \Bigl(\bigl(\vvu, \vvTheta_j^+\bigr)_{\statespace}\, \vvTheta_j^+
            + \bigl(\vvu, \vvTheta_j^-\bigr)_{\statespace}\, \vvTheta_j^-\Bigr).
\end{equation}

The eigenfunction relations for \(M\) and \(M^\ast\) imply that \(\mathcal{P}_{\mathrm{u}}\) and \(M\) commute which is needed for our stability analysis.
\begin{lem}\label{lem:FirstOrderUnobservedReducing}
    For every \(\vvu\in D(M)\), it holds that \(\mathcal{P}_{\mathrm{u}}\vvu\in D(M)\) and
    \begin{equation*}
        M\mathcal{P}_{\mathrm{u}}\vvu = \mathcal{P}_{\mathrm{u}}M\vvu .
    \end{equation*}
\end{lem}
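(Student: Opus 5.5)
The plan is to exploit that the lifted modes \(\vvTheta_j^\pm\) are simultaneously eigenvectors of \(M\) and \(M^\ast\) (Lemma~\ref{lem:LiftedEigenfunctions}). We fix \(\vvu\in D(M)\) and compute the Fourier coefficients of \(M\vvu\) against the orthonormal system. For \(j\in J_{\mathrm{u}}\), part \ref{lem:LiftedEigenfunctions:Adjoint} of Lemma~\ref{lem:LiftedEigenfunctions} gives
\begin{equation*}
    \bigl(M\vvu,\vvTheta_j^\pm\bigr)_{\statespace}
        = \bigl(\vvu, M^\ast\vvTheta_j^\pm\bigr)_{\statespace}
        = \bigl(\vvu, \mp i\beta_j\vvTheta_j^\pm\bigr)_{\statespace}
        = \pm i\beta_j \bigl(\vvu,\vvTheta_j^\pm\bigr)_{\statespace},
\end{equation*}
using that the inner product is conjugate linear in the second argument. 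Consequently,
\begin{equation*}
    \mathcal{P}_{\mathrm{u}}M\vvu = \sum_{j\in J_{\mathrm{u}}}\sum_{\pm} \pm i\beta_j\bigl(\vvu,\vvTheta_j^\pm\bigr)_{\statespace}\vvTheta_j^\pm ,
\end{equation*}
where the series converges in \(\statespace\) by Bessel's inequality applied to \(M\vvu\).

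Next we show \(\mathcal{P}_{\mathrm{u}}\vvu\in D(M)\) with \(M\mathcal{P}_{\mathrm{u}}\vvu\) equal to this same series. The partial sums \(\vvu_N = \sum_{j\le N}\sum_\pm (\vvu,\vvTheta_j^\pm)\vvTheta_j^\pm\) are finite linear combinations of elements of \(D(M)\), by part \ref{lem:LiftedEigenfunctions:Eigen}. Hence \(\vvu_N\in D(M)\) with
\begin{equation*}
    M\vvu_N=\sum_{j\le N}\sum_\pm \pm i\beta_j(\vvu,\vvTheta_j^\pm)\vvTheta_j^\pm .
\end{equation*}
As \(N\to\infty\), \(\vvu_N\to\mathcal{P}_{\mathrm{u}}\vvu\), and \(M\vvu_N\) converges to the series above, which equals \(\mathcal{P}_{\mathrm{u}}M\vvu\). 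Since \(M\) generates a \(C^0\)-semigroup (Theorem~\ref{thm:Existence}), it is closed. Closedness therefore yields \(\mathcal{P}_{\mathrm{u}}\vvu\in D(M)\) and \(M\mathcal{P}_{\mathrm{u}}\vvu=\mathcal{P}_{\mathrm{u}}M\vvu\).

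The main point needing care is the convergence of \(M\vvu_N\). It would be problematic if one tried to bound it via \(\sum\beta_j^2|(\vvu,\vvTheta_j^\pm)|^2\) directly, since the \(\beta_j\) are unbounded. The identity from the first step sidesteps this: it identifies these weighted coefficients as the Fourier coefficients of \(M\vvu\), so the relevant sum is finite by Bessel's inequality. One must only enumerate \(J_{\mathrm{u}}\) (countable) and handle the case that it is finite, which is trivial.
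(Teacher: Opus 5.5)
Your proposal is correct and follows essentially the same argument as the paper. You use the adjoint eigenrelation from Lemma~\ref{lem:LiftedEigenfunctions}~\ref{lem:LiftedEigenfunctions:Adjoint} to show that \(M\) applied to the finite partial sums of \(\mathcal{P}_{\mathrm{u}}\vvu\) equals the corresponding partial sums of \(\mathcal{P}_{\mathrm{u}}M\vvu\), and then conclude by the closedness of \(M\).
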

\begin{proof}
    Let \(\vvu\in D(M)\).
    By Lemma~\ref{lem:LiftedEigenfunctions}~\ref{lem:LiftedEigenfunctions:Adjoint}, it holds that
    \begin{equation}\label{eq:FirstOrderReducingEigenvalueRelation}
        \bigl(M\vvu, \vvTheta_j^\pm\bigr)_{\statespace}
            = \bigl(\vvu, M^\ast \vvTheta_j^\pm\bigr)_{\statespace}
            = \pm i\beta_j \bigl(\vvu, \vvTheta_j^\pm\bigr)_{\statespace}.
    \end{equation}
    The finite-rank projections
    \begin{equation*}
        \mathcal{P}_{\mathrm{u}}^N \vvu = \sum_{\substack{j\in J_{\mathrm{u}}\\ j\leq N}}
            \bigl(\vvu,\vvTheta_j^+\bigr)_{\statespace}\,\vvTheta_j^+
            + \bigl(\vvu,\vvTheta_j^-\bigr)_{\statespace}\,\vvTheta_j^-
    \end{equation*}
    satisfy \(\mathcal{P}_{\mathrm{u}}^N\vvu\in D(M)\) by Lemma~\ref{lem:LiftedEigenfunctions}~\ref{lem:LiftedEigenfunctions:Eigen} for \(N\in \N\), and \(\mathcal{P}_{\mathrm{u}}^N\vvu\to \mathcal{P}_{\mathrm{u}}\vvu\) in \(\statespace\) as \(N\to\infty\).
    Equation \eqref{eq:FirstOrderReducingEigenvalueRelation} and Lemma~\ref{lem:LiftedEigenfunctions} yield
    \begin{align*}
        M \mathcal{P}_{\mathrm{u}}^N \vvu 
            &= \sum_{\substack{j\in J_{\mathrm{u}}\\ j\leq N}}
            \bigl(\vvu,-i\beta_j\vvTheta_j^+\bigr)_{\statespace}\,\vvTheta_j^+
            + \bigl(\vvu,i\beta_j\vvTheta_j^-\bigr)_{\statespace}\,\vvTheta_j^-\\
            &= \sum_{\substack{j\in J_{\mathrm{u}}\\ j\leq N}}
            \bigl(M\vvu,\vvTheta_j^+\bigr)_{\statespace}\,\vvTheta_j^+
            + \bigl(M\vvu,\vvTheta_j^-\bigr)_{\statespace}\,\vvTheta_j^-
            = \mathcal{P}_{\mathrm{u}}^N M\vvu.
    \end{align*}
    As \(N\to \infty\), the closedness of \(M\) leads to the assertion.
\end{proof}

\subsection*{Strong stability}

The following lemma splits the evolution into a unitary part on the unobserved subspace and a strongly stable part on its orthogonal complement.
The proof of the stability assertion is based on the Arendt--Batty--Lyubich--V\~u theorem, see \cite[Cor.~V.2.22]{Eng.Nag.2000}.

\begin{lem}\label{lem:SemigroupDecomposition}
    The subspaces \(\statespace_{\mathrm{u}}\) and \(\statespace_{\mathrm{s}} = (I - \mathcal{P}_{\mathrm{u}})\statespace = \statespace_{\mathrm{u}}^{\perp}\) are reducing for the semigroup \(\bigl(e^{tM}\bigr)_{t\geq 0}\).
    For every \(\vvu\in\statespace\) it holds that
    \begin{enumerate}[label=\normalfont (\alph*)]
        \item\label{lem:SemigroupDecomposition:Unitary}
            \(\bigl\|e^{tM}\mathcal{P}_{\mathrm{u}}\vvu\bigr\|_{\statespace} = \bigl\|\mathcal{P}_{\mathrm{u}}\vvu\bigr\|_{\statespace}\) for all \(t\geq 0\), and
        \item\label{lem:SemigroupDecomposition:Stable}
            \(\bigl\|e^{tM}(I-\mathcal{P}_{\mathrm{u}})\vvu\bigr\|_{\statespace} \to 0\) as \(t\to\infty\).
    \end{enumerate}
\end{lem}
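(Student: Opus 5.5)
First I would establish that the two subspaces are reducing. By Lemma~\ref{lem:FirstOrderUnobservedReducing}, \(\mathcal{P}_{\mathrm{u}}\) commutes with \(M\) on \(D(M)\). Hence it commutes with the resolvent \((\lambda-M)^{-1}\) for \(\lambda>0\): apply \(\lambda-M\) to \(\mathcal{P}_{\mathrm{u}}(\lambda-M)^{-1}\vvu\). Through the exponential formula \(e^{tM}=\lim_{n\to\infty}(I-\tfrac tn M)^{-n}\), it then commutes with \(e^{tM}\). Because \(\mathcal{P}_{\mathrm{u}}\) is an orthogonal projection, both \(\statespace_{\mathrm{u}}\) and \(\statespace_{\mathrm{s}}=\statespace_{\mathrm{u}}^\perp\) are invariant, i.e.\ reducing.

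Part (a) is direct. Lemma~\ref{lem:LiftedEigenfunctions}\ref{lem:LiftedEigenfunctions:Eigen} gives \(e^{tM}\vvTheta_j^\pm=e^{\pm i\beta_jt}\vvTheta_j^\pm\). Writing \(\mathcal{P}_{\mathrm{u}}\vvu\) as the orthonormal expansion \eqref{eq:FirstOrderUnobservedProjection} and using boundedness of \(e^{tM}\) to pass through the series, we get that \(e^{tM}\mathcal{P}_{\mathrm{u}}\vvu\) has the same coefficients up to unimodular factors. Parseval in the orthonormal system of Lemma~\ref{lem:LiftedEigenfunctions}\ref{lem:LiftedEigenfunctions:ONS} then yields the isometry.

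For part (b) I would apply the Arendt--Batty--Lyubich--V\~u theorem to the restricted semigroup on \(\statespace_{\mathrm{s}}\). Its generator is the part \(M_{\mathrm{s}}\) of \(M\) in \(\statespace_{\mathrm{s}}\), with domain \(D(M)\cap\statespace_{\mathrm{s}}\). The semigroup is contractive, hence bounded. The resolvent of \(M_{\mathrm{s}}\) is the restriction of the compact resolvent from Lemma~\ref{lem:FirstOrderCompactResolvent}, so \(\sigma(M_{\mathrm{s}})\) consists only of eigenvalues. It remains to show two facts. First, \(\sigma(M_{\mathrm{s}})\cap i\R\) is countable, which is immediate from discreteness of the spectrum. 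Second, \(M_{\mathrm{s}}^\ast\) has no eigenvalues on \(i\R\); on a reflexive space with compact resolvent it is enough to show that \(M_{\mathrm{s}}\) has no eigenvalues on \(i\R\).

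The eigenvalue \(0\) is excluded by Lemma~\ref{lem:FirstOrderKernel}. So suppose \(\vvu=(\vvE,\vvH)\in D(M)\cap\statespace_{\mathrm{s}}\setminus\{0\}\) with \(M\vvu=i\beta\vvu\) and \(\beta\neq0\). By Lemma~\ref{lem:FirstOrderImaginaryEigenvalues} and Corollary~\ref{cor:FirstOrderSpectrumOnAxis}, \(\vvE=\sum c_j\vvPhi_j\) over \(j\in J_{\mathrm{u}}\) with \(\beta_j=|\beta|\), a finite sum, and \(\vvH=\tfrac i\beta\mu^{-1}\curl\vvE\). Comparing with \eqref{eq:LiftedEigenfunctions}, this gives \(\vvu=\sum c_j\vvTheta_j^{\operatorname{sgn}\beta}\), up to the factor \(\beta_j\) absorbed into the coefficients. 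Thus \(\vvu\in\statespace_{\mathrm{u}}\cap\statespace_{\mathrm{u}}^\perp=\{0\}\), a contradiction.

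I expect this identification to be the main obstacle. One has to check the sign bookkeeping: the substitution \(\vvH=\tfrac{i}{\beta}\mu^{-1}\curl\vvE\) with \(\vvE=\beta_j\vvPhi_j\) should match \(\pm i\mu^{-1}\curl\vvPhi_j\) exactly when \(\beta=\pm\beta_j\). If the point-spectrum condition for the adjoint is needed directly rather than via compactness, I would argue in the same way using Lemma~\ref{lem:LiftedEigenfunctions}\ref{lem:LiftedEigenfunctions:Adjoint}, with \(M^\ast\) restricted to \(\statespace_{\mathrm{s}}\). The Arendt--Batty--Lyubich--V\~u theorem then gives strong stability on \(\statespace_{\mathrm{s}}\), which is (b).
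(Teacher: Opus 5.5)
Your proposal is correct and follows the paper's proof essentially step by step. It obtains the reducing subspaces from commutation with the resolvent and the exponential formula, and part (b) from the compact resolvent of the restriction, the fact that every imaginary-axis eigenvector of \(M\) lies in \(\statespace_{\mathrm{u}}\) (your bookkeeping \(\beta_j/\beta=\operatorname{sgn}\beta\) is right), and the Arendt--Batty--Lyubich--V\~u theorem. The only deviation is minor and occurs in part (a): you expand \(e^{tM}\mathcal{P}_{\mathrm{u}}\vvu\) directly via \(e^{tM}\vvTheta_j^\pm=e^{\pm i\beta_jt}\vvTheta_j^\pm\) and Parseval, whereas the paper commutes \(e^{tM}\) past \(\mathcal{P}_{\mathrm{u}}\) and uses \(e^{tM^\ast}\vvTheta_j^\pm=e^{\mp i\beta_jt}\vvTheta_j^\pm\); both arguments are valid.
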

\begin{proof}
    By Lemma~\ref{lem:FirstOrderUnobservedReducing} and the standard equivalence between commuting orthogonal projections for a generator and for its semigroup, the subspace \(\statespace_{\mathrm{u}}\) is reducing for the semigroup \(\bigl(e^{tM}\bigr)_{t\geq 0}\), and so is the orthogonal complement \(\statespace_{\mathrm{s}}\).
    This equivalence follows, for instance, from the resolvent identity and the exponential formula for \(C^0\)-semigroups, see \cite[Cor.~III.5.5]{Eng.Nag.2000}.

    We prove \ref{lem:SemigroupDecomposition:Unitary}.
    Lemma~\ref{lem:LiftedEigenfunctions} yields
    \begin{align*}
        \bigl\|e^{tM} \mathcal{P}_{\mathrm{u}}\vvu\bigr\|_{\statespace}^2
            = \bigl\|\mathcal{P}_{\mathrm{u}} e^{tM}\vvu\bigr\|_{\statespace}^2
            &= \sum_{j\in J_{\mathrm{u}}}
                \bigl|\bigl(e^{tM}\vvu, \vvTheta_j^+\bigr)_{\statespace}\bigr|^2
                + \bigl|\bigl(e^{tM}\vvu, \vvTheta_j^-\bigr)_{\statespace}\bigr|^2\\
            &= \sum_{j\in J_{\mathrm{u}}}
                \bigl|\bigl(\vvu, e^{tM^\ast}\vvTheta_j^+\bigr)_{\statespace}\bigr|^2
                + \bigl|\bigl(\vvu, e^{tM^\ast}\vvTheta_j^-\bigr)_{\statespace}\bigr|^2
            = \bigl\|\mathcal{P}_{\mathrm{u}}\vvu\bigr\|_{\statespace}^2,
    \end{align*}
    where we used \(e^{tM^\ast}\vvTheta_j^\pm = e^{\mp i\beta_j t}\vvTheta_j^\pm\) from the proof of Lemma~\ref{lem:LiftedEigenfunctions} in the last step.

    We prove \ref{lem:SemigroupDecomposition:Stable}.
    We denote the restriction of the semigroup to \(\statespace_{\mathrm{s}}\) by \(\bigl(e^{tM_{\mathrm{s}}}\bigr)_{t\geq 0}\), with generator \(M_{\mathrm{s}}\) and domain \(D(M_{\mathrm{s}}) = (I-\mathcal{P}_{\mathrm{u}})D(M)\).
    Since \(\mathcal{P}_{\mathrm{u}}\) commutes with the resolvent of \(M\), the restriction \(M_{\mathrm{s}}\) inherits the compact resolvent from Lemma~\ref{lem:FirstOrderCompactResolvent}, so that \(\operatorname{spec}(M_{\mathrm{s}}) = \operatorname{spec}_{\mathrm{p}}(M_{\mathrm{s}})\) is countable.

    Moreover, every eigenfunction of \(M\) with purely imaginary eigenvalue belongs to \(\statespace_{\mathrm{u}}\).
    Indeed, let \(M\bigl(\vvE,\vvH\bigr) = i\beta\bigl(\vvE,\vvH\bigr)\) with \(\bigl(\vvE,\vvH\bigr)\neq 0\), where \(\beta\neq 0\) by Lemma~\ref{lem:FirstOrderKernel}.
    Corollary~\ref{cor:FirstOrderSpectrumOnAxis} provides the representation \(\vvE = \sum_{j\in J_{\mathrm{u}}} c_j\vvPhi_j\), and Lemma~\ref{lem:FirstOrderImaginaryEigenvalues} gives \(\vvH = \tfrac{i}{\beta}\mu^{-1}\curl\vvE\), so that
    \begin{equation*}
        \bigl(\vvE,\vvH\bigr)
            = \sum_{\substack{j\in J_{\mathrm{u}}\\ \beta_j = |\beta|}} \frac{c_j}{\beta_j}\, \vvTheta_j^{\pm}
            \in \statespace_{\mathrm{u}}.
    \end{equation*}
    Therefore, \(\kernel(i\beta - M)\cap \statespace_{\mathrm{s}} \subset \statespace_{\mathrm{u}}\cap\statespace_{\mathrm{s}} = \monosetc{0}\), i.e., \(\operatorname{spec}_{\mathrm{p}}(M_{\mathrm{s}})\cap i\R = \emptyset\).
    So the Arendt--Batty--Lyubich--V\~u theorem shows that \(\bigl(e^{tM_{\mathrm{s}}}\bigr)_{t\geq 0}\) is strongly stable, see \cite[Cor.~V.2.22]{Eng.Nag.2000}.
    Since \((I-\mathcal{P}_{\mathrm{u}})\vvu \in \statespace_{\mathrm{s}}\), assertion \ref{lem:SemigroupDecomposition:Stable} follows.
\end{proof}

We can now prove the decomposition result announced at the beginning of the section.

\begin{proof}[Proof of Proposition~\ref{prop:Decomp}]
    We choose the closed subspace \(\statespace_{\mathrm{u}}\) from \eqref{eq:FirstOrderUnobservedSubspace} with the orthogonal projection \(\mathcal{P}_{\mathrm{u}}\) from \eqref{eq:FirstOrderUnobservedProjection}, and put \(\statespace_{\mathrm{s}} = \statespace_{\mathrm{u}}^\perp\).
    Since \(\statespace_{\mathrm{u}}\) and \(\statespace_{\mathrm{s}}\) are reducing by Lemma~\ref{lem:SemigroupDecomposition}, the trajectory splits orthogonally as \(e^{tM}\vvu = e^{tM}\mathcal{P}_{\mathrm{u}}\vvu + e^{tM}(I-\mathcal{P}_{\mathrm{u}})\vvu\), and parts \ref{lem:SemigroupDecomposition:Unitary} and \ref{lem:SemigroupDecomposition:Stable} of the lemma yield
    \begin{equation}\label{eq:FirstOrderTrajectorySplit}
        \bigl\|e^{tM}\vvu\bigr\|_{\statespace}^2
            = \bigl\|\mathcal{P}_{\mathrm{u}}\vvu\bigr\|_{\statespace}^2
                + \bigl\|e^{tM}(I-\mathcal{P}_{\mathrm{u}})\vvu\bigr\|_{\statespace}^2
            \to \bigl\|\mathcal{P}_{\mathrm{u}}\vvu\bigr\|_{\statespace}^2,
        \qquad t\to\infty,
    \end{equation}
    for every \(\vvu\in\statespace\).
    This is the asserted convergence.

    We prove \ref{prop:Decomp:Stable}.
    By \eqref{eq:FirstOrderTrajectorySplit}, the trajectory tends to \(0\) if and only if \(\mathcal{P}_{\mathrm{u}}\vvu = 0\), so that
    \begin{equation}\label{eq:FirstOrderDecaySet}
        \setc[\big]{\vvu\in\statespace}{\|e^{tM}\vvu\|_{\statespace}\to 0 \text{ as } t\to\infty}
            = \kernel\mathcal{P}_{\mathrm{u}}
            = \statespace_{\mathrm{s}}.
    \end{equation}
    In particular, every subspace of \(\statespace\) on which the semigroup is strongly stable is contained in \(\statespace_{\mathrm{s}}\), and \(\statespace_{\mathrm{s}}\) itself is strongly stable by Lemma~\ref{lem:SemigroupDecomposition}~\ref{lem:SemigroupDecomposition:Stable}.

    We prove \ref{prop:Decomp:Unitary}.
    Let \(\vvu\in\statespace\) with \(\|e^{tM}\vvu\|_{\statespace} = \|\vvu\|_{\statespace}\) for all \(t\geq 0\).
    Since \(\|\vvu\|_{\statespace}^2 = \|\mathcal{P}_{\mathrm{u}}\vvu\|_{\statespace}^2 + \|(I-\mathcal{P}_{\mathrm{u}})\vvu\|_{\statespace}^2\), the identity in \eqref{eq:FirstOrderTrajectorySplit} implies
    \begin{equation*}
        \bigl\|e^{tM}(I-\mathcal{P}_{\mathrm{u}})\vvu\bigr\|_{\statespace}
            = \bigl\|(I-\mathcal{P}_{\mathrm{u}})\vvu\bigr\|_{\statespace},
        \qquad t\geq 0.
    \end{equation*}
    Letting \(t\to\infty\), Lemma~\ref{lem:SemigroupDecomposition}~\ref{lem:SemigroupDecomposition:Stable} yields \((I-\mathcal{P}_{\mathrm{u}})\vvu = 0\), i.e., \(\vvu\in\statespace_{\mathrm{u}}\).
    Conversely, every \(\vvu\in\statespace_{\mathrm{u}}\) satisfies \(\|e^{tM}\vvu\|_{\statespace} = \|\vvu\|_{\statespace}\) for all \(t\geq0\) by Lemma~\ref{lem:SemigroupDecomposition}~\ref{lem:SemigroupDecomposition:Unitary}.
    Hence, any subspace of \(\statespace\) on which the semigroup acts isometrically is contained in \(\statespace_{\mathrm{u}}\).
\end{proof}

\section{Eigenmodes of the cavity operator}\label{sec:CavityEigenmodes}

Following the arguments from \cite{Cos.Dau.2019}, we characterize the eigenmodes of the cavity operator $A_{\redstatespace}$ from \eqref{eq:AH} in the non-refracting case
\begin{equation}\label{partcasecoef}
\epsilon^{(1)}\mu^{(1)}= \epsilon^{(2)}\mu^{(2)}.
\end{equation}

 Given Snell's law \cite[§7.3]{Jac.1975.CE}, this means that no refraction at the interface occurs. 
 For brevity, under the assumption \eqref{partcasecoef}, we set
 \[
 \nu=\epsilon^{(1)}\mu^{(1)}= \epsilon^{(2)}\mu^{(2)}.
 \]
We then identify the unobserved modes and prove Theorem~\ref{thm:FirstOrderStability}.
 
 \subsection{Eigenmodes of one-dimensional operators}
First, given $\zeta\in (-1,1)$, let us introduce the operator
 $B_{\rm Dir}$ in $L^2_\epsilon (-1,1)$ defined by 
 \[
 D(B_{\rm Dir})
  =\{w\in H^1_0(-1,1)\cap PH^2(-1,1)\, |\, \jmp{\mu^{-1} \partial_1 w}(\zeta)=0\},
  \quad B_{\rm Dir} w= -\epsilon^{-1} \partial_1(\mu^{-1}\partial_1 w),
 \]
 where
 $\jmp{v}(\zeta):=v(\zeta+)-v(\zeta-)$, and $PH^2(-1,1)$ is the set of functions in $L^2(-1,1)$ whose restrictions belong to $H^2(-1,\zeta)$ respectively $H^2(\zeta, 1)$.
 We denote with $\partial_1$ the partial derivative with respect to the variable $x_1$.
 This operator is self-adjoint with compact resolvent in $L^2_\epsilon (-1,1)$ which is equipped with the inner product
 \[
 (w_1,w_2)_\epsilon=\int_{-1}^1 w_1 \overline{w_2} \epsilon \,\mathrm{d}x_1,\qquad \forall w_1, w_2\in L^2 (-1,1).
 \]
Indeed, it is associated with the continuous, sesquilinear and coercive form 
 \[
 a(w_1,w_2) =\int_{-1}^1 w'_1 \overline{w'_2} \mu^{-1} \,\mathrm{d}x_1,\qquad  \forall w_1, w_2\in H^1_0(-1,1).
 \]
Hence, the spectrum of $B_{\rm Dir}$ is discrete, and the associated eigenvectors form an orthonormal basis of $L^2_\epsilon (-1,1)$.
This set of eigenmodes is characterized in the next result.
\begin{lem}\label{lem:BDir}
The spectrum of $B_{\rm Dir}$ is the set of positive real numbers $\lambda^2$ (with $\lambda>0$) that satisfy
\begin{equation}\label{characteristic_eq}
\mu^{(1)} \cos(\sqrt{\nu} \lambda (\zeta-1)) \sin(\sqrt{\nu} \lambda(\zeta+1))
-\mu^{(2)}\cos(\sqrt{\nu} \lambda (\zeta+1)) \sin(\sqrt{\nu} \lambda (\zeta-1))=0.
\end{equation}
Each eigenvalue is simple and its associated eigenvector is given by
\begin{equation}\label{splitting_eigenvector}
w(x_1)=
\left\{
\begin{array}{ll}
a_1 \sin (\sqrt{\nu} \lambda (x_1+1)) &\text{ if } x_1\leq \zeta,
\\
a_2 \sin (\sqrt{\nu} \lambda (x_1-1)) &\text{ if } x_1\geq \zeta,
\end{array}
\right.
\end{equation}
where the nonzero real coefficients $a_1$ and $a_2$ are determined up to a common scalar factor by the transmission conditions.
\end{lem}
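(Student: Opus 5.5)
The plan is to solve the eigenvalue ODE explicitly on each subinterval and then impose the transmission conditions at $\zeta$. Since $B_{\rm Dir}$ is self-adjoint and associated with the coercive form $a$, every eigenvalue is positive, so we may write it as $\lambda^2$ with $\lambda>0$.

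Let $w\in D(B_{\rm Dir})$ satisfy $B_{\rm Dir}w=\lambda^2 w$. On each $(-1,\zeta)$ and $(\zeta,1)$ the coefficients are constant, so $w''=-\epsilon^{(i)}\mu^{(i)}\lambda^2 w=-\nu\lambda^2 w$. Hence $w$ is a combination of $\sin(\sqrt\nu\lambda x_1)$ and $\cos(\sqrt\nu\lambda x_1)$ on each piece. The Dirichlet conditions $w(-1)=0=w(1)$ fix the phase on each side, which gives exactly the form \eqref{splitting_eigenvector} with some constants $a_1,a_2$. Here the non-refracting assumption \eqref{partcasecoef} is used, since it makes the frequency $\sqrt\nu\lambda$ the same on both sides.

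Membership in $H^1_0(-1,1)$ forces continuity at $\zeta$. Together with $\jmp{\mu^{-1}\partial_1 w}(\zeta)=0$ this gives a $2\times2$ homogeneous linear system for $(a_1,a_2)$:
\begin{align*}
a_1\sin(\sqrt\nu\lambda(\zeta+1)) - a_2\sin(\sqrt\nu\lambda(\zeta-1)) &= 0,\\
\tfrac{1}{\mu^{(1)}}a_1\cos(\sqrt\nu\lambda(\zeta+1)) - \tfrac{1}{\mu^{(2)}}a_2\cos(\sqrt\nu\lambda(\zeta-1)) &= 0.
\end{align*}
A nontrivial solution exists if and only if the determinant vanishes. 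Multiplying the determinant by $-\mu^{(1)}\mu^{(2)}$ gives precisely \eqref{characteristic_eq}. Conversely, any root $\lambda>0$ yields a nontrivial pair $(a_1,a_2)$, and the corresponding $w$ lies in $D(B_{\rm Dir})$ and is an eigenfunction. This establishes the characterization of the spectrum.

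Simplicity follows because the two rows of the coefficient matrix cannot both vanish. Indeed, $\sin$ and $\cos$ of the same argument never vanish simultaneously, so the matrix has rank exactly one at a root, and the solution space is one-dimensional. Equivalently, one can note that an eigenfunction is determined by $w'(-1)$, by uniqueness for the ODE together with the transmission conditions.

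It remains to show that $a_1$ and $a_2$ are both nonzero. Suppose $a_1=0$ with $a_2\neq0$. Then the first equation gives $\sin(\sqrt\nu\lambda(\zeta-1))=0$ and the second gives $\cos(\sqrt\nu\lambda(\zeta-1))=0$, which is impossible; the case $a_2=0$ is symmetric. Realness of $a_1,a_2$ follows because the system has real coefficients.

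The main, and only mild, obstacle is this nondegeneracy bookkeeping, namely ruling out vanishing coefficients and a rank-zero matrix. It is settled by the observation above that $\sin$ and $\cos$ have no common zero.
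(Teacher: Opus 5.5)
Your proposal is correct and follows the paper's proof: you solve the ODE piecewise with the Dirichlet conditions fixing the phases, reduce continuity and $\jmp{\mu^{-1}\partial_1 w}(\zeta)=0$ to a $2\times 2$ system whose determinant is \eqref{characteristic_eq}, and get simplicity from the fact that $\sin$ and $\cos$ of $\sqrt{\nu}\lambda(\zeta+1)$ have no common zero. Your explicit check that $a_1,a_2\neq 0$ and your treatment of the converse direction are small additions that the paper leaves implicit; it justifies the nonvanishing of the coefficients only later, in the proof of Lemma~\ref{lem:UnobservedModes}.
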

\begin{proof}
Let $\lambda^2$ be an eigenvalue of  $B_{\rm Dir}$ (with $\lambda>0$).
Then its associated eigenvector  $w\in D(B_{\rm Dir})$ solves
\[
-\partial_1(\mu^{-1}\partial_1 w)=\epsilon \lambda^2 w \qquad \text{ in } (-1,1).
\]
Due to the Dirichlet boundary condition at $-1$ and 1, $w$ is given by \eqref{splitting_eigenvector} for some real numbers $a_1$ and $a_2$. So it remains the transmission conditions at $\zeta$. First the continuity condition of $w$ is equivalent to
\[
a_1 \sin (\sqrt{\nu} \lambda (\zeta+1))=
a_2 \sin (\sqrt{\nu} \lambda (\zeta-1)).
\]
Second the jump condition $\jmp{\mu^{-1} \partial_1 w}=0$ here reduces to
\[
a_1 (\mu^{(1)})^{-1}\cos (\sqrt{\nu} \lambda (\zeta+1))=
a_2 (\mu^{(2)})^{-1}\cos (\sqrt{\nu} \lambda (\zeta-1)),
\]
since $\sqrt{\nu}\lambda$ is positive. This means that a nontrivial solution exists if and only if
\[
\det \left(
\begin{array}{ll}
\sin (\sqrt{\nu} \lambda (\zeta+1)) & -\sin (\sqrt{\nu} \lambda (\zeta-1))\\
(\mu^{(1)})^{-1}\cos (\sqrt{\nu} \lambda (\zeta+1)) &-(\mu^{(2)})^{-1}\cos (\sqrt{\nu }\lambda (\zeta-1))
\end{array}
\right)=0,
\]
which is equivalent to \eqref{characteristic_eq}. Finally such an eigenvalue is simple because
$\sin (\sqrt{\nu} \lambda (\zeta+1))$ and $\cos (\sqrt{\nu}\lambda (\zeta+1))$ cannot both be equal to zero.
\end{proof}

Similarly, we introduce the operator $B_{\rm Neu}$ in $L^2_\mu (-1,1)$ defined by
 \begin{align*}
  D(B_{\rm Neu})
    &=\{w\in H^1(-1,1)\cap PH^2(-1,1)\, |\, \jmp{\epsilon^{-1} \partial_1 w}(\zeta)=0
  \text{ and } w'(-1)=w'(1)=0\},\qquad 
  \\B_{\rm Neu} w&= -\mu^{-1}\partial_1(\epsilon^{-1} \partial_1 w).
 \end{align*}
 Again this operator is self-adjoint with compact resolvent in $L^2_\mu (-1,1)$.
As before, we can determine the set of eigenmodes.
\begin{lem}\label{lem:BNeu}
Assume \eqref{partcasecoef}.
First, \(0\) is an eigenvalue of $B_{\rm Neu}$ with constant eigenvector.
Second  a positive real number    $\lambda^2$ (with $\lambda>0$) is an eigenvalue of  $B_{\rm Neu}$ if and only if 
\eqref{characteristic_eq} holds.
Each positive eigenvalue is simple and its associated eigenvector is given by
\begin{equation}\label{splitting_eigenvector_Neumann}
w(x_1)=
\left\{
\begin{array}{ll}
a_1 \cos (\sqrt{\nu} \lambda (x_1+1)) &\text{ if } x_1\leq \zeta,
\\
a_2 \cos(\sqrt{\nu} \lambda (x_1-1)) &\text{ if } x_1\geq \zeta,
\end{array}
\right.
\end{equation}
where the nonzero real coefficients $a_1$ and $a_2$ are determined up to a common scalar factor by the transmission conditions.
\end{lem}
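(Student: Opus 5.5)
The plan follows the proof of Lemma~\ref{lem:BDir}, with sines replaced by cosines and the roles of \(\epsilon\) and \(\mu\) exchanged.

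For the zero eigenvalue, a constant \(w\) lies in \(D(B_{\rm Neu})\): it is in \(H^1\), its derivative vanishes so the jump and Neumann conditions hold trivially, and \(B_{\rm Neu}w=0\). Conversely, if \(B_{\rm Neu}w=0\), testing the associated form \(\int_{-1}^1 \epsilon^{-1} w_1'\overline{w_2'}\,\mathrm{d}x_1\) on \(H^1(-1,1)\) with \(w\) itself gives \(w'=0\). Hence \(w\) is constant and the eigenvalue \(0\) is simple.

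Now let \(\lambda>0\) and let \(w\in D(B_{\rm Neu})\) solve \(-\partial_1(\epsilon^{-1}\partial_1 w)=\mu\lambda^2 w\). On each subinterval this reads \(w''=-\epsilon^{(i)}\mu^{(i)}\lambda^2 w=-\nu\lambda^2 w\), and here assumption \eqref{partcasecoef} is used essentially: the frequency is the same on both sides. The Neumann conditions at \(\pm1\) force the form \eqref{splitting_eigenvector_Neumann}. The two interface conditions at \(\zeta\) are
\begin{equation*}
a_1\cos(\sqrt\nu\lambda(\zeta+1))=a_2\cos(\sqrt\nu\lambda(\zeta-1))
\end{equation*}
(continuity, since \(w\in H^1\)) and
\begin{equation*}
a_1(\epsilon^{(1)})^{-1}\sin(\sqrt\nu\lambda(\zeta+1))=a_2(\epsilon^{(2)})^{-1}\sin(\sqrt\nu\lambda(\zeta-1))
\end{equation*}
(the flux condition, after cancelling the common factor \(-\sqrt\nu\lambda\neq0\)). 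A nontrivial pair \((a_1,a_2)\) exists if and only if the determinant vanishes:
\begin{equation*}
(\epsilon^{(1)})^{-1}\cos(\sqrt\nu\lambda(\zeta-1))\sin(\sqrt\nu\lambda(\zeta+1))-(\epsilon^{(2)})^{-1}\cos(\sqrt\nu\lambda(\zeta+1))\sin(\sqrt\nu\lambda(\zeta-1))=0.
\end{equation*}

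The only step that needs care is matching this with \eqref{characteristic_eq}, which has \(\mu^{(i)}\) in place of \((\epsilon^{(i)})^{-1}\). Multiplying by \(\nu\) and using \(\nu/\epsilon^{(i)}=\mu^{(i)}\) from \eqref{partcasecoef} turns the equation exactly into \eqref{characteristic_eq}. Without the non-refracting assumption the two characteristic equations would differ, and the frequencies would also differ on the two sides.

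Finally, each positive eigenvalue is simple: \(\cos(\sqrt\nu\lambda(\zeta+1))\) and \(\sin(\sqrt\nu\lambda(\zeta+1))\) cannot vanish simultaneously, so one of the two interface relations determines \(a_2\) from \(a_1\). To see that both coefficients are nonzero, suppose \(a_1=0\). Then both interface relations force \(a_2\cos(\sqrt\nu\lambda(\zeta-1))=a_2\sin(\sqrt\nu\lambda(\zeta-1))=0\), hence \(a_2=0\). The case \(a_2=0\) is symmetric. Realness of the coefficients follows because the system has real entries.
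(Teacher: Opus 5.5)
Your proof is correct and follows the same route as the paper. The kernel is identified via the quadratic form, and the Neumann conditions at \(\pm1\) force the cosine ansatz. Continuity of \(w\) and of \(\epsilon^{-1}w'\) at \(\zeta\) then gives a \(2\times 2\) system, whose determinant becomes \eqref{characteristic_eq} after using \(\nu/\epsilon^{(i)}=\mu^{(i)}\). Simplicity follows because the coefficient matrix is nonzero, exactly as in Lemma~\ref{lem:BDir}.

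One harmless slip: the non-vanishing of \(\cos(\sqrt{\nu}\lambda(\zeta+1))\) and \(\sin(\sqrt{\nu}\lambda(\zeta+1))\) lets one relation determine \(a_1\) from \(a_2\), not \(a_2\) from \(a_1\). Either way the solution space of the system is at most one-dimensional, which is all you need.
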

\begin{proof}
The quadratic form of $B_{\rm Neu}$ shows that its kernel consists of the constant functions.
For $\lambda>0$, the equation $B_{\rm Neu}w=\lambda^2w$ and the boundary conditions imply \eqref{splitting_eigenvector_Neumann}.
Continuity of $w$ and $\epsilon^{-1}w'$ at $\zeta$ yields a nontrivial solution precisely when
\[
\epsilon^{(1)}\cos\bigl(\sqrt{\nu}\lambda(\zeta+1)\bigr)
    \sin\bigl(\sqrt{\nu}\lambda(\zeta-1)\bigr)
-\epsilon^{(2)}\cos\bigl(\sqrt{\nu}\lambda(\zeta-1)\bigr)
    \sin\bigl(\sqrt{\nu}\lambda(\zeta+1)\bigr)=0.
\]
Substituting $\epsilon^{(i)}=\nu/\mu^{(i)}$ gives \eqref{characteristic_eq}.
The proof can be complemented as in the previous lemma.
\end{proof}

Assumption \eqref{partcasecoef} gives the same longitudinal wavenumber $\sqrt{\nu}\lambda$ on both sides of the interface, as used in the preceding formulas.
We denote by $(\lambda_n^2)_{n\in \N}$ the common positive eigenvalues of $B_{\rm Dir}$ and \(B_{\rm Neu}\), in increasing order.
The orthonormal set of Dirichlet eigenfunctions are denoted with $(w_n)_{n\in \N}$, and the Neumann eigenfunctions with 
$\tilde w_n$.
The zero eigenvalue of $B_{\rm Neu}$ is denoted with \(\lambda_0 = 0\), and the associated eigenfunction is given by
$\tilde w_0=\left(\int_{-1}^1 \mu\, \mathrm{d}x_1\right)^{-1/2}$.

\subsection{TE and TM modes}

For \(n_1, m_2, m_3\in\N_0\), we define the wavenumbers
\begin{equation}\label{eq:Wavenumbers_general}
    k_1 = \begin{cases}
      \lambda_{n_1}, & n_1 > 0,\\
      0, &n_1 = 0,
    \end{cases}
    \qquad k_2 = m_2\pi,
    \qquad k_3 = m_3\pi,
    \qquad
    \beta_{n_1,m_2,m_3}^2 = k_1^2 + \nu^{-1}(k_2^2 + k_3^2).
\end{equation}
The eigenmodes come in two families, the transverse electric (TE) and the transverse magnetic (TM) modes, both with eigenvalue \(\beta_{n_1,m_2,m_3}^2\).
The TE modes are defined for \(n_1\geq 1\) and \(m_2 + m_3 > 0\) by
\begin{equation}\label{eq:TEModesgeneral}
    \vvE^{\mathrm{TE}}_{n_1,m_2,m_3}(x_1,x_2,x_3)
        \simeq w_{n_1}(x_1)
        \begin{pmatrix}
            0\\[2pt]
            \phantom{-}m_3 \cos(k_2x_2)\sin(k_3x_3)\\
            -m_2\sin(k_2x_2)\cos(k_3x_3)
        \end{pmatrix},
\end{equation}
and the TM modes for \(n_1\geq 0\) and \(m_2, m_3\geq 1\) by
\begin{equation}\label{eq:TMModesgeneral}
    \vvE^{\mathrm{TM}}_{n_1,m_2,m_3}(x_1,x_2,x_3)
        \simeq
        \begin{pmatrix}
            (k_2^2+k_3^2)\epsilon^{-1} \tilde w_{n_1}(x_1)\sin(k_2x_2)\sin(k_3x_3)\\
            k_2 \epsilon^{-1} \widetilde{\partial_1} \tilde w_{n_1}(x_1)\cos(k_2x_2)\sin(k_3x_3)\\
            k_3\epsilon^{-1} \widetilde{\partial_1} \tilde w_{n_1}(x_1)\sin(k_2x_2)\cos(k_3x_3)
        \end{pmatrix}.
\end{equation}
Here, $\simeq$ denotes equality up to a nonzero normalization constant, and
\[
\widetilde{\partial_1}w(x_1)
=
\begin{cases}
    \partial_1 w^{(1)}(x_1), & x_1<\zeta,\\
    \partial_1 w^{(2)}(x_1), & x_1>\zeta,
\end{cases}
\] is the piecewise derivative of $w$.
It can be checked that these functions form an orthogonal set in \(L^2_\epsilon(Q)\), using also \eqref{partcasecoef}.

Indeed, as in \cite{Cos.Dau.2019} for the case \(\epsilon=1=\mu\), we can prove the following results.

\begin{lem}\label{lemmaeigenfctAH}
Under the assumption \eqref{partcasecoef}, the TE-modes   $\vvE^{\mathrm{TE}}_{n_1,m_2,m_3}$, with $n_1\geq 1, m_2 + m_3 > 0$ 
and the TM-modes   $\vvE^{\mathrm{TM}}_{n_1,m_2,m_3}$ with $n_1\geq 0, m_2, m_3\geq 1$
are  eigenvectors of $A_{\redstatespace}$. In both cases, the associated eigenvalue is
$\beta_{n_1,m_2,m_3}^2$.
\end{lem}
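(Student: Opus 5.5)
By Lemma~\ref{lem:CavityEigenmodes}, it suffices to show three things for each mode $\vvPhi$ (TE or TM): $\vvPhi\in\redenergyspace\setminus\monosetc{0}$, $\mu^{-1}\curl\vvPhi\in\vvH(\curl,Q)$, and
\[
\epsilon^{-1}\curl\mu^{-1}\curl\vvPhi=\beta_{n_1,m_2,m_3}^2\vvPhi
\]
in $L^2(Q)^3$. I would verify these piecewise in $Q_1,Q_2$. The global statements then reduce to interface conditions on $F$: continuity of $\vvPhi_\tau$ and of $\epsilon\vvPhi_\perp$, which by \eqref{eq:ConnectionToFullSpaces} give $\vvPhi\in\vvH(\curl,Q)\cap\vvH(\div_\epsilon,Q)$, and continuity of $(\mu^{-1}\curl\vvPhi)_\tau$, which gives $\mu^{-1}\curl\vvPhi\in\vvH(\curl,Q)$. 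The boundary condition $\vvPhi\times\vvnu=0$ on $\partial Q$ must also be checked. Nonvanishing is clear from the ranges of indices.

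\textbf{TE modes.} Write $\vvPhi=w(x_1)\vvG(x_2,x_3)$ with $\vvG=(0,m_3\cos(k_2x_2)\sin(k_3x_3),-m_2\sin(k_2x_2)\cos(k_3x_3))$ and $w=w_{n_1}\in D(B_{\rm Dir})$.
\begin{itemize}
\item \emph{Divergence and boundary.} $\div\vvPhi=w\,(\partial_2G_2+\partial_3G_3)=0$, and $\vvPhi_\perp=0$. On $\Gamma_1^\pm$ we have $w(\pm1)=0$; on $\Gamma_2^\pm,\Gamma_3^\pm$ the tangential components vanish because of the sines.
\item \emph{Tangential continuity.} $\vvPhi_\tau$ is continuous across $F$ since $w$ is continuous.
\item \emph{Curl.} We get $\curl\vvPhi=(w\,g,\,-w'G_3,\,w'G_2)$ with $g=\partial_2G_3-\partial_3G_2$. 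Then $(\mu^{-1}\curl\vvPhi)_\tau$ involves $\mu^{-1}w'$, which is continuous by $\jmp{\mu^{-1}\partial_1 w}=0$.
\item \emph{Eigenvalue equation.} Using $\curl\curl=-\Delta+\grad\div$ piecewise, on $Q_i$ we have $\curl\mu^{-1}\curl\vvPhi=\mu^{-1}(-w''\vvG+(k_2^2+k_3^2)w\vvG)$, since $\div\vvPhi=0$ and $-\Delta_{x_2x_3}\vvG=(k_2^2+k_3^2)\vvG$. With $-\mu^{-1}w''=\epsilon\lambda^2w$ and $\epsilon\mu=\nu$, dividing by $\epsilon$ gives $(\lambda^2+\nu^{-1}(k_2^2+k_3^2))\vvPhi$.
\end{itemize}

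\textbf{TM modes.} Here I would not compute the curl-curl directly. Instead, define the magnetic field $\vvPsi=\mu^{-1}\tilde w\,(0,-k_3\sin(k_2x_2)\cos(k_3x_3),\,k_2\cos(k_2x_2)\sin(k_3x_3))$, where $\tilde w=\tilde w_{n_1}\in D(B_{\rm Neu})$, and check the following.
\begin{itemize}
\item A direct computation shows $\epsilon^{-1}\curl\vvPsi$ is proportional to the given $\vvE^{\mathrm{TM}}$. The $x_1$-component produces $(k_2^2+k_3^2)\epsilon^{-1}\mu^{-1}\tilde w$, and $\mu^{-1}=\epsilon\nu^{-1}$ is a constant multiple of $\epsilon$, matching the stated formula up to normalization. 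The other components produce $\epsilon^{-1}\partial_1(\mu^{-1}\tilde w)$, which is $\propto\epsilon^{-1}\widetilde{\partial_1}\tilde w$ piecewise, again by \eqref{partcasecoef}.
\item $\vvPsi_\tau$ is continuous, since it involves $\mu^{-1}\tilde w$. This is the delicate point: $\tilde w$ is continuous but $\mu$ jumps. I expect to resolve it by normalizing via $\mu\vvPsi$ instead, or by using $\mu^{-1}=\epsilon/\nu$ together with the identity $\jmp{\epsilon^{-1}\tilde w'}=0$.
\end{itemize}
Once the representation $\vvPhi\propto\epsilon^{-1}\curl\vvPsi$ is fixed, the remaining properties follow:
\begin{itemize}
\item $\div\epsilon\vvPhi=0$ globally, because $\epsilon\vvPhi$ is a curl of a field with continuous tangential trace.
\item $\vvPhi\times\vvnu=0$ on $\partial Q$: on $\Gamma_1^\pm$ it follows from $\tilde w'(\pm1)=0$, and on the other faces from the sines.
\item Continuity of $\vvPhi_\tau$ follows from $\jmp{\epsilon^{-1}\tilde w'}=0$.
\item $\mu^{-1}\curl\vvPhi$ is proportional to $\vvPsi$ by the Neumann eigenvalue equation $-\mu^{-1}(\epsilon^{-1}\tilde w')'=\lambda^2\tilde w$ combined with the transverse Laplacian, with factor $\beta^2$. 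This yields the eigenvalue equation and the continuity of $(\mu^{-1}\curl\vvPhi)_\tau$ simultaneously.
\item For $n_1=0$, $\tilde w$ is constant, and the same computation gives $\beta^2=\nu^{-1}(k_2^2+k_3^2)$.
\end{itemize}

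The main obstacle is the bookkeeping of which quantity jumps across $F$ for TM modes. Here \eqref{partcasecoef} is used essentially: it makes $\epsilon$ and $\mu^{-1}$ proportional, so the Neumann transmission condition and the Maxwell transmission conditions coincide. This needs to be checked carefully rather than by routine differentiation.
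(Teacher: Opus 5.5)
Your TE verification is correct and is essentially the paper's argument. The TM part, however, rests on a potential with the wrong weight. With \(v=\sin(k_2x_2)\sin(k_3x_3)\) and your \(\vvPsi=\mu^{-1}\tilde w\,(0,-\partial_3 v,\partial_2 v)\), a piecewise computation gives
\[
\epsilon^{-1}\curl\vvPsi
 =-\epsilon^{-1}\mu^{-1}\bigl((k_2^2+k_3^2)\tilde w\,v,\ \tilde w'\,\nabla_\perp v\bigr)
 =-\nu^{-1}\bigl((k_2^2+k_3^2)\tilde w\,v,\ \tilde w'\,\nabla_\perp v\bigr).
\]
This has a \emph{constant} prefactor, whereas \(\vvE^{\mathrm{TM}}\) in \eqref{eq:TMModesgeneral} carries the jumping factor \(\epsilon^{-1}\). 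The two fields differ by the piecewise constant factor \(\mu\), so they are not proportional unless the medium is homogeneous. Your remark that \(\mu^{-1}=\epsilon/\nu\) shows precisely that \(\epsilon^{-1}\mu^{-1}\) is constant, which is the opposite of what you need.

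The spurious factor also breaks the later steps. The jump \(\jmp{\vvPsi_\tau}\) contains \(\tilde w(\zeta)\jmp{\mu^{-1}}\), which is nonzero, for instance, when \(n_1=0\) (then \(\tilde w_0\) is a nonzero constant). So the claim that ``\(\epsilon\vvPhi\) is the curl of a field with continuous tangential trace'' fails. Likewise, \(\mu^{-1}\curl\vvPhi\) turns out proportional to \(\mu\vvPsi\) rather than to \(\vvPsi\). Your second suggested remedy cannot help: \(\jmp{\epsilon^{-1}\tilde w'}=0\) constrains the derivative of \(\tilde w\) at \(\zeta\), not its value.

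Your first remedy is the correct one: take \(\vvPsi=\tilde w\,(0,-\partial_3 v,\partial_2 v)\). Then \(-\epsilon^{-1}\curl\vvPsi\) coincides with the unnormalized \(\vvE^{\mathrm{TM}}\). Since \(\tilde w\) is continuous, \(\vvPsi\in\vvH(\curl,Q)\), and hence \(\div(\epsilon\vvE^{\mathrm{TM}})=0\) in all of \(Q\). The tangential continuity and boundary conditions of \(\vvE^{\mathrm{TM}}\) follow, as you say, from \(\jmp{\epsilon^{-1}\tilde w'}=0\), \(\tilde w'(\pm1)=0\) and the sines. Moreover, \(\div\vvPsi=0\) and \(\tilde w''=-\nu\lambda_{n_1}^2\tilde w\) on both subintervals give \(\curl\curl\vvPsi=\nu\beta_{n_1,m_2,m_3}^2\vvPsi\) piecewise. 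Hence
\[
\mu^{-1}\curl\vvE^{\mathrm{TM}}=-(\epsilon\mu)^{-1}\nu\,\beta_{n_1,m_2,m_3}^2\vvPsi=-\beta_{n_1,m_2,m_3}^2\vvPsi .
\]
This is where \eqref{partcasecoef} enters, and it yields the \(\mu^{-1}\curl\) transmission condition and the eigenvalue equation at once.

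With this correction, your route is a valid alternative to the paper's. The paper inserts the electric ansatz \((0,\nabla_\perp v)\widetilde{\partial_1}w-(\Delta_\perp v,0)w\) and derives the jump conditions \(\jmp{\epsilon w}=\jmp{\partial_1 w}=\jmp{\mu^{-1}w}=\jmp{\partial_1(\mu^{-1}\partial_1 w)}=0\) one by one. It then substitutes \(\tilde w=\epsilon w\) to land in \(D(B_{\rm Neu})\). Your magnetic-potential viewpoint instead makes the divergence and \(\mu^{-1}\curl\) interface conditions automatic.
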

\begin{proof}
We first look for TE-modes in the form
\begin{equation}\label{TEmodesCD}
\vvu(x_1, x_\perp)=\left(
\begin{array}{ll}
0\\
\curl_\perp v(x_\perp)
\end{array}
\right) w(x_1),
\end{equation}
where $x=(x_1,x_2, x_3)=(x_1, x_\perp)$ and
\[
\curl_\perp v(x_\perp)=\left(
\begin{array}{ll}
\partial_3 v\\
-\partial_2 v
\end{array}
\right),
\]
see \cite[(2.7)]{Cos.Dau.2019}.
In the following, we also use the differential operators for vector fields \(\vvv = (v_1,v_2)\)
\begin{equation*}
    \scurl_\perp \vvv = \partial_1 v_2 -\partial_2 v_1,
    \qquad
    \div_\perp \vvv = \partial_1 v_1 + \partial_2 v_2.
\end{equation*}

We notice that $\div(\epsilon \vvu)=0$ since $u_1=0$ and \(\div_\perp\curl_\perp = 0\).
From the definition of $\vvu$ the electric boundary conditions on $\partial Q$ become
\begin{equation}\label{electicbcTE}
\partial_n v=0 \quad \text{ on } \partial (0,1)^2,
\qquad w(-1)=w(1)=0.
\end{equation}
Further the jump conditions 
\begin{equation}\label{interfacecond}
\jmp{\vvu_\tau}= 0 = \jmp{\mu^{-1} (\curl\vvu)_\tau} \quad \text{ on } F,
\end{equation}
are equivalent to 
\begin{equation}\label{jumpTE}
\jmp{w}(\zeta) = 0 = \jmp{\mu^{-1} \partial_1 w}(\zeta).
\end{equation}
 As in equation (3.1) of  \cite{Cos.Dau.2019}, we see that
 \begin{equation}\label{eigenpbAH}
   \epsilon^{-1}\curl\mu^{-1}\curl\vvu = \beta^2 \vvu,
   \end{equation}
is satisfied if
\begin{equation}\label{eigenpbv}
-\epsilon^{-1} \mu^{-1} \Delta_\perp v=\lambda_{\perp} v\quad \text{ in } (0,1)^2,
 \end{equation}
and
 \[
-\epsilon^{-1} \partial_1(\mu^{-1} \partial_1 w)=\lambda_\parallel w\quad \text{ in } (-1,1),
\]
with $\lambda_{\perp}+\lambda_\parallel=\beta^2$ and \(\Delta_{\perp} = \partial_2^2 + \partial_3^2\).
Using the boundary condition \eqref{electicbcTE}  on $v$ and $\epsilon \mu=\nu$, we deduce that $v(x_\perp) \simeq \cos(k_2x_2) \cos(k_3x_3)$ and
\begin{equation}\label{eigenTEv}
\lambda_{\perp}=\nu^{-1} (m_2^2+m_3^2)\pi^2\quad 
\text{ for } m_2, m_3\in \mathbb{N}_0.
\end{equation}

The Dirichlet boundary condition \eqref{electicbcTE}  on $w$ and the jump condition  \eqref{jumpTE} imply that $w\in D(B_{\rm Dir})$, \(w\simeq w_n\), and 
\begin{equation}\label{eigenTEvparallel}
\lambda_{\parallel}=\lambda_n^2\quad 
\text{ for } n\in \mathbb{N}.
\end{equation}
This ends the proof for the TE-modes because the case $m_2=m_3=0$  yields  $\vvu=\mathbf{0}$.

We now look for TM-modes in the form
\begin{equation}\label{TMmodesCD}
\vvu(x_1, x_\perp)=\left(
\begin{array}{ll}
0\\
\nabla_\perp v(x_\perp)
\end{array}
\right) \widetilde{\partial_1} w(x_1)-\left(
\begin{array}{ll}
\Delta_\perp v(x_\perp)
\\
0
\end{array}
\right)  w(x_1),
\end{equation}
see \cite[(2.8)]{Cos.Dau.2019}, where 
\[
\nabla_\perp v(x_\perp)=\left(
\begin{array}{ll}
\partial_2 v\\
\partial_3 v
\end{array}
\right).
\]

First, using \eqref{eq:ConnectionToFullSpaces} we see that $\div(\epsilon \vvu)=0$ if and only if  $\jmp{ \epsilon u_1}=0$ on $F$
or equivalently
\begin{equation}\label{jumpTMdiv}
\jmp{\epsilon w}(\zeta)=0.
\end{equation}

The electric boundary conditions on $\partial Q$ can be written as
\begin{equation}\label{electicbcTM}
 v=0\quad \text{ on } \partial (0,1)^2,\qquad  w'(-1)=w'(1)=0.
\end{equation}
Further the jump conditions \eqref{interfacecond} hold if
\begin{equation}\label{jumpTM}
  \jmp{\partial_1 w}(\zeta) = 0,\,\,\, \text{and }\quad  
  \jmp{\partial_1 (\mu^{-1} \partial_1 w)}(\zeta) = 0,\,\,\, \text{and }\quad
  \jmp{\mu^{-1} w}(\zeta)= 0.
\end{equation}
 Finally, from the equation (3.5) of \cite{Cos.Dau.2019}, applied in $Q_i$ each, we see that \(\vvu\) solves
 \eqref{eigenpbAH} if \eqref{eigenpbv} and
\begin{equation}\label{eigenTMw}
-\epsilon^{-1}  \partial_1 (\mu^{-1} \partial_1) w=\lambda_\parallel w\quad  \text{ in } (-1,\zeta)\cup (\zeta, 1),
\end{equation}
with $\lambda_{\perp}+\lambda_\parallel=\beta^2$ hold.
From the boundary condition \eqref{electicbcTM}  on $v$ and \eqref{eigenpbv}, we deduce that  $v(x_\perp) \simeq \sin(k_2x_2) \sin(k_3x_3)$ and
\[
\lambda_{\perp}=\nu^{-1} (m_2^2+m_3^2)\pi^2 \quad
\text{ for } m_2, m_3\in \N.
\]

The situation is more delicate for the eigenvalue problem for $w$. 
First we notice that assumption \eqref{partcasecoef} implies that the condition $\jmp{\mu^{-1} w}(\zeta)= 0$ in \eqref{jumpTM} is equivalent to \eqref{jumpTMdiv}. 
Further the condition 
$\jmp{\partial_1 (\mu^{-1} \partial_1 w)}(\zeta)= 0$ is a consequence of \eqref{jumpTMdiv} and of \eqref{eigenTMw}. 
In summary, the eigenvalue problem for $w$ reads
\begin{equation}\label{eigenpbTMw}
\left\{
\begin{array}{lll}
-\epsilon^{-1} \partial_1(\mu^{-1} \partial_1 w)=\lambda_\parallel w\quad \text{ in } (-1,\zeta)\cup (\zeta, 1),
\\
w'(-1)=w'(1)=0,\\
\jmp{\epsilon w}(\zeta)=0,\quad 
\jmp{\partial_1 w}(\zeta)=0.
\end{array}
\right.
\end{equation}
We now set $\tilde w=\epsilon w$, and notice that the previous system is equivalent to
 \begin{equation}\label{eigenpbTMtildew}
\left\{
\begin{array}{lll}
-\mu^{-1} \partial_1(\epsilon^{-1} \partial_1 \tilde w)=\lambda_\parallel \tilde w\quad \text{ in } (-1,\zeta)\cup (\zeta, 1),
\\
\tilde w'(-1)=\tilde w'(1)=0,\\
\jmp{\tilde w}(\zeta)=0,\quad  
\jmp{\epsilon^{-1}\partial_1 \tilde w}(\zeta)=0.
\end{array}
\right.
\end{equation}
We deduce that $\tilde w\in D(B_{\rm Neu})$ and 
$B_{\rm Neu} \tilde w= \lambda_\parallel  \tilde w$ in $L^2_\mu(-1,1)$. This shows that $w\simeq \epsilon^{-1}\tilde w_n$ with
\[
\lambda_{\parallel}= \lambda_n^2,
\text{ for } n\in \mathbb{N}_{0}.
\]
\end{proof}

\begin{thm}\label{thm:completeness}
Under the assumption \eqref{partcasecoef}, a complete family of eigenvectors of $A_{\redstatespace}$ is given by $\vvE^{\mathrm{TE}}_{n_1,m_2,m_3}$, with $n_1\geq 1,\, m_2 + m_3 > 0$, 
and  $\vvE^{\mathrm{TM}}_{n_1,m_2,m_3}$ with $n_1\geq 0,\, m_2, m_3\geq 1$. 
In both cases, the associated eigenvalue is
$\beta_{n_1,m_2,m_3}^2$. These eigenvectors form an orthogonal basis of $\redstatespace$.
\end{thm}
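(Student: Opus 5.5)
Lemma~\ref{lemmaeigenfctAH} already shows that the listed TE and TM modes are eigenvectors of \(A_{\redstatespace}\) with eigenvalue \(\beta^2_{n_1,m_2,m_3}\), and they are mutually orthogonal in \(L^2_\epsilon(Q)^3\). It therefore remains to prove completeness: a field \(\vvU\in\redstatespace\) that is \(L^2_\epsilon\)-orthogonal to all TE and TM modes must vanish. Since \(A_{\redstatespace}\) is self-adjoint with compact resolvent, every eigenspace is finite dimensional and \(\redstatespace\) is the orthogonal sum of the eigenspaces. It thus suffices to show the following: every eigenfunction \(\vvPhi\) of \(A_{\redstatespace}\), that is, a solution of \eqref{eq:CavityEigenproblem} by Lemma~\ref{lem:CavityEigenmodes}, lies in the span of the TE/TM modes with the same eigenvalue.

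For this I would use separation of variables in the transverse direction, following \cite{Cos.Dau.2019}. The transverse boundary conditions \(\vvPhi\times\vvnu=0\) on \(\Gamma_2^\pm\cup\Gamma_3^\pm\) suggest expanding in \((x_2,x_3)\) as follows:
\begin{itemize}
\item \(\Phi_1\) in the Dirichlet basis \(\sin(k_2x_2)\sin(k_3x_3)\);
\item \(\Phi_2\) in \(\cos(k_2x_2)\sin(k_3x_3)\);
\item \(\Phi_3\) in \(\sin(k_2x_2)\cos(k_3x_3)\).
\end{itemize}
These are exactly the bases in which the modes \eqref{eq:TEModesgeneral}--\eqref{eq:TMModesgeneral} are built. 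The coefficients are functions of \(x_1\) in \(PH^1(-1,1)\), using the embedding \(\redenergyspace\hookrightarrow PH^1(Q)^3\) of Lemma~\ref{lem:StabilityEmbeddings}. For fixed \((m_2,m_3)\) I would decompose the transverse coefficient pair \((\Phi_2,\Phi_3)\) into a \(\nabla_\perp\)-part and a \(\curl_\perp\)-part. Concretely, set \(\phi_{m}=k_2\Phi_{2,m}+k_3\Phi_{3,m}\) and \(\psi_m=k_3\Phi_{2,m}-k_2\Phi_{3,m}\) (suitably normalized), with the degenerate cases \(m_2=0\) or \(m_3=0\) treated separately. Inserting this into the eigenvalue equation \(\epsilon^{-1}\curl\mu^{-1}\curl\vvPhi=\beta^2\vvPhi\), the divergence condition \(\div(\epsilon\vvPhi)=0\), and the interface conditions (\(\jmp{\vvPhi_\tau}=0\), \(\jmp{\epsilon\Phi_1}=0\), and \eqref{eq:CavityEigenproblemjumpcond}) decouples the system into two one-dimensional problems:
\begin{itemize}
\item \(\psi_m\) solves \(B_{\rm Dir}\psi_m=(\beta^2-\nu^{-1}(k_2^2+k_3^2))\psi_m\), which is the TE part;
\item \(\epsilon\Phi_{1,m}\) solves the Neumann problem \eqref{eigenpbTMtildew}, which is the TM part, while \(\phi_m\) is recovered from the divergence condition as \(\epsilon^{-1}\widetilde{\partial_1}(\epsilon\Phi_{1,m})/(k_2^2+k_3^2)\), up to constants.
\end{itemize}
Lemmas~\ref{lem:BDir} and~\ref{lem:BNeu} then force \(\psi_m\simeq w_{n}\) and \(\epsilon\Phi_{1,m}\simeq\tilde w_{n}\) with \(\lambda_n^2=\beta^2-\nu^{-1}(k_2^2+k_3^2)\). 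Hence \(\vvPhi\) is a finite combination of TE/TM modes with eigenvalue \(\beta^2\).

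One further case needs attention: \(m_2=m_3=0\), where only \(\Phi_2,\Phi_3\) could have constant transverse parts. Here \(\Phi_1\) has no such component, so \(\div(\epsilon\vvPhi)=0\) and the tangential boundary conditions force these parts to vanish. This matches the fact that no TE mode exists with \(m_2+m_3=0\).

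The main obstacle is justifying the separation rigorously at the interface. I must show that the coefficient functions satisfy the one-dimensional transmission conditions (for instance \(\jmp{\mu^{-1}\partial_1\psi_m}=0\)) in a strong sense and lie in \(PH^2\). I would obtain this by testing the weak formulation of \(A_{\redstatespace}\) with products of a transverse basis function and a one-dimensional test function in \(H^1_0(-1,1)\), respectively \(H^1(-1,1)\). This yields exactly the variational forms of \(B_{\rm Dir}\) and \(B_{\rm Neu}\), and their regularity then gives the conditions. The non-refracting assumption \eqref{partcasecoef} is used to ensure that the same \(\lambda_n\) appears on both sides, as in Lemma~\ref{lemmaeigenfctAH}. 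Orthogonality in \(L^2_\epsilon\) and the spectral decomposition then conclude that the family is an orthogonal basis of \(\redstatespace\).
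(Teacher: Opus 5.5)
Your proposal is correct, but it proves completeness by a different route than the paper. The paper never analyses a general eigenfunction of $A_{\redstatespace}$ and does not use the spectral decomposition. Instead, it takes an arbitrary $\vvu\in\redstatespace$ orthogonal to all listed modes and shows $\vvu=0$ directly:
\begin{itemize}
\item For $\vvu\in\redenergyspace$ it integrates by parts in $x_\perp$ and then piecewise in $x_1$, using $\div\vvu=0$ in $Q_i$, $\jmp{\epsilon u_1}=0$ and the Neumann conditions of $\tilde w_{n_1}$. This turns the pairing with a TM mode into a nonzero multiple of $\int_Q u_1\sin(k_2x_2)\sin(k_3x_3)\tilde w_{n_1}\,\mathrm{d}x$.
\item By density this identity holds on all of $\redstatespace$, and completeness of these products gives $u_1=0$.
\item Orthogonality to the TE modes and completeness of $(w_n)$ in $L^2_\epsilon(-1,1)$ then show that, for a.e.\ $x_1$, the field $\vvu_\perp(x_1,\cdot)$ is curl-free with vanishing tangential trace on $\partial(0,1)^2$. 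It is also divergence-free because $u_1=0$, hence it vanishes.
\end{itemize}
That route needs no regularity of eigenfunctions and meets the interface only in one piecewise integration by parts. Your route (orthogonal sum of eigenspaces, then a transverse Fourier expansion of an arbitrary eigenfunction and reduction to $B_{\rm Dir}$ and $B_{\rm Neu}$) costs more work at the interface, since you must derive the one-dimensional transmission conditions for the Fourier coefficients. In return it directly gives the explicit description of each eigenspace that Lemma~\ref{lem:UnobservedModes} relies on; in the paper that description is a consequence of completeness plus orthogonality. I checked your decoupling: $\jmp{\mu^{-1}\psi_m'}=0$ follows from the two tangential jump conditions, and $\phi_m=\Phi_{1,m}'$ comes from the divergence. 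So the reduction is sound, and the non-refracting assumption enters exactly where you say.

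Three small points to fix when writing it out:
\begin{itemize}
\item Orthogonality is not part of Lemma~\ref{lemmaeigenfctAH}, so you still need a short check. Modes with different eigenvalues are orthogonal automatically, modes with different transverse indices are orthogonal transversally, and for a TE and a TM mode with the same indices the transverse pairing is proportional to $m_3k_2-m_2k_3=0$.
\item With your transverse bases there is no $(m_2,m_3)=(0,0)$ component at all, since $\cos(0)\sin(0)=0$, so that case is vacuous.
\item Test fields such as $\chi(x_1)\sin(k_2x_2)\sin(k_3x_3)\vve_1$ are not in $\redenergyspace$. You should first extend the variational identity to all of $\vvH_0(\curl,Q)$, for instance via $\vvH_0(\curl,Q)=\redenergyspace\oplus\grad H^1_0(Q)$ together with $\div(\epsilon\vvPhi)=0$ in $Q$.
\end{itemize}
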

\begin{proof}
We recall that by Lemma \ref{lem:StabilityEmbeddings} the space \(\redenergyspace\) is dense in  \(\redstatespace\), and that the embedding \(\redenergyspace\hookrightarrow PH^1(Q)^3\) is continuous.

Let us fix a TM-mode $\vvE^{\mathrm{TM}}_{n_1,m_2,m_3}$ for \(n_1\geq 0\) and \(m_2, m_3\geq 1\).
We write $v(x_\perp) =\sin(k_2x_2) \sin(k_3x_3)$, $w(x_1)=\epsilon^{-1} \tilde w_{n_1}$ and choose the real representative in \eqref{eq:TMModesgeneral} without a normalization factor. We set
\[
I_{n_1,m_2,m_3}(\vvu)=\int_Q \vvu(x)\cdot \vvE^{\mathrm{TM}}_{n_1,m_2,m_3}(x)\, \epsilon \mathrm{d}x
\qquad \vvu \in \redstatespace.
\]

In a first step, we take \(\vvu\in \redenergyspace\) and transform $I_{n_1,m_2,m_3}(\vvu)$.
As $\epsilon=\nu \mu^{-1}$, we have
 \[
I_{n_1,m_2,m_3}(\vvu)=\nu  \int_Q \left(\vvu_\perp (x)\cdot 
\nabla_\perp v(x_\perp) 
\widetilde{\partial_1} w(x_1)-u_1(x)
\Delta_\perp v(x_\perp)
   w(x_1) \right) \mu^{-1}\, \,\mathrm{d}x.
  \]
The first term on the right-hand side is integrated by parts in the transverse variables.
Since $v=0$ on $\partial (0,1)^2$, no boundary term occurs, and \eqref{eigenpbv} gives
\[
I_{n_1,m_2,m_3}(\vvu)=\nu\int_Q \left(-\div_\perp\vvu_\perp (x) 
 v(x_\perp) 
\widetilde{\partial_1} w(x_1)+\nu \lambda_\perp u_1(x)
  v(x_\perp)
   w(x_1) \right) \mu^{-1}\, \,\mathrm{d}x.
   \]
   Since $0=\div\vvu=\partial_1 u_1+\div_\perp\vvu_\perp$ on $Q_i, i=1,2$, we obtain
   \[
I_{n_1,m_2,m_3}(\vvu)=\nu\int_Q\left( v(x_\perp)\widetilde{\partial_1}  u_1
\widetilde{\partial_1} w(x_1)+\nu \lambda_\perp u_1(x)
  v(x_\perp)
   w(x_1)\right) \mu^{-1}\, \,\mathrm{d}x.
   \]
We integrate the first term on the right-hand side by parts with respect to $x_1$ on $(-1,\zeta)$ and $(\zeta,1)$.
The endpoint terms vanish since $w'(-1)=w'(1)=0$.
The interface terms cancel because $\jmp{\mu^{-1}u_1}=\nu^{-1}\jmp{\epsilon u_1}=0$ for $\vvu\in\redenergyspace$ and $\jmp{w'}=0$ by \eqref{eigenpbTMw}. Thus,
\[
  I_{n_1,m_2,m_3}(\vvu)=\nu\int_Q\left(-v(x_\perp) u_1(x)
  (\widetilde{\partial_1})^2 w(x_1)+\nu \lambda_\perp u_1(x)
    v(x_\perp)
     w(x_1)\right) \mu^{-1}\, \,\mathrm{d}x.
\]
The first identity of \eqref{eigenpbTMw} and \eqref{partcasecoef} imply
   \[
I_{n_1,m_2,m_3}(\vvu)=\nu (\lambda_\parallel+\lambda_\perp)\int_Q u_1(x)
  v(x_\perp)
   w(x_1) \epsilon\, \,\mathrm{d}x.
\]
Note that \(I_{n_1,m_2,m_3}\) and the right-hand side in the last identity are continuous linear forms on \(\redstatespace\).
The density of \(\redenergyspace\) in \(\redstatespace\) then allows to extend this identity to all of \(\redstatespace\), obtaining
\begin{equation}\label{equalityTMmodes}
  I_{n_1,m_2,m_3}(\vvu)=\nu (\lambda_\parallel+\lambda_\perp)\int_Q u_1(x) 
 \sin(k_2x_2) \sin(k_3x_3) \tilde w_{n_1} \, \,\mathrm{d}x,\qquad \forall \vvu\in \redstatespace.
\end{equation}
This identity also proves orthogonality of the TM modes in \(\redstatespace\) replacing \(\vvu\) by another TM mode. 
Its first component is a multiple of $\epsilon^{-1}\tilde w_{\ell_1}\sin(\ell_2\pi x_2)\sin(\ell_3\pi x_3)$, and $\epsilon^{-1}=\mu/\nu$.
So orthogonality follows from that of the Neumann eigenfunctions in $L^2_\mu(-1,1)$ and of the transverse sine functions.
Since TE modes have zero first component, \eqref{equalityTMmodes} also gives orthogonality between the two families.
The TE modes are mutually orthogonal by the corresponding transverse orthogonality and that of the Dirichlet eigenfunctions in $L^2_\epsilon(-1,1)$.
 
We now show that $\vvu\in \redstatespace$ vanishes if it is orthogonal to all listed eigenvectors.
 First the set  
 $\sin(k_2x_2) \sin(k_3x_3) \tilde w_n$ forms an orthogonal basis of $L^2_\mu(Q)$.
 The identity \eqref{equalityTMmodes} thus yields 
 $\mu^{-1} u_1=0$ or equivalently $u_1=0$.
 By means of the orthogonality of $\vvu$ to all TE-mode $\vvE^{\mathrm{TE}}_{n_1,m_2,m_3}$ for \(n_1\geq 1\) and \(m_2+ m_3>0\) and by   \eqref{TEmodesCD}, we directly obtain
\[
\int_{(-1,1)} \left(\int_{(0,1)^2} \vvu_\perp(x)\cdot \curl_\perp \left(\cos(k_2x_2) \cos(k_3x_3)\right) \, \,\mathrm{d}x_\perp\right)   w_{n_1}(x_1)
\epsilon \, \,\mathrm{d}x_1=0.
\]
Since  the set $\{w_n\}_{n\in \mathbb{N}}$ forms an orthonormal basis of $L^2_\epsilon(-1,1)$, we deduce that
\begin{equation}\label{eq:LastIdentity}
  \int_{(0,1)^2} \vvu_\perp(x)\cdot \curl_\perp \left(\cos(k_2x_2) \cos(k_3x_3)\right) \, \,\mathrm{d}x_\perp=0, \qquad
  \text{ a.e. } x_1\in (-1,1),
\end{equation}
for all \(m_2, m_3\in \mathbb{N}_0: m_2+ m_3>0\). 
Furthermore, as $\curl_\perp 1=0$, this identity remains valid  for all \(m_2, m_3\in \mathbb{N}_0$. 
Additionally, as the set
\(\{\cos(k_2x_2) \cos(k_3x_3)\}_{m_2, m_3\in \mathbb{N}_0}\) forms an orthogonal basis of \(H^1((0,1)^2)\), we deduce that
\begin{equation}\label{eq:LastIdentityonH1}
  \int_{(0,1)^2} \vvu_\perp(x)\cdot \curl_\perp v\, \,\mathrm{d}x_\perp=0, \qquad
   \forall v\in H^1((0,1)^2), \text{ a.e. } x_1\in (-1,1).
\end{equation}
Taking first \(\vvu_\perp\) in \(\mathcal{D}((0,1)^2)\), this equation implies that 
\begin{equation}\label{eq:curlfreeuperp}
\scurl_\perp \vvu_\perp(x_1,\cdot)=0 \text{ in } \mathcal{D}'((0,1)^2), \text{ a.e. } x_1\in (-1,1).
\end{equation}
Second, integrating by parts in \eqref{eq:LastIdentityonH1}, we also obtain
\begin{equation}\label{eq:tangentialcomponentuperp}
\vvu_\perp(x_1,\cdot)\cdot \tau=0 \text{ on } \partial (0,1)^2, \text{ a.e. } x_1\in (-1,1)
\end{equation}
for tangential vectors \(\tau\).
Observe that \(\div_{\perp} \vvu_{\perp} = 0\) as $u_1=0$ and $\div(\epsilon \vvu)=0$.
This last property combined with \eqref{eq:curlfreeuperp} and \eqref{eq:tangentialcomponentuperp} allows us to conclude that $\vvu_\perp=(0,0)$, because \((0,1)^2\) is simply connected.
\end{proof}

\subsection{Eigenmodes of the cavity}

It remains to compute the unobserved eigenpairs \eqref{eq:ObservedSplit} and prove Theorem~\ref{thm:FirstOrderStability}.
We continue to assume \eqref{partcasecoef} and use the TE and TM modes from \eqref{eq:TEModesgeneral} and \eqref{eq:TMModesgeneral}, which form an orthogonal basis by Theorem~\ref{thm:completeness}.

\begin{lem}\label{lem:UnobservedModes}
    Assume \eqref{partcasecoef}, and let \(\zeta\in(-1,1)\) with \(\zeta\notin\mathbb{Q}\).
    Then the unobserved eigenmodes can be chosen as
    \begin{equation*}
        \setc[\big]{\vvPhi_j}{j\in J_{\mathrm{u}}}
            = \setc[\big]{\vvE^{\mathrm{TM}}_{0,m_2,m_3}}{m_2,m_3\geq 1},
    \end{equation*}
    up to normalization. In particular, writing \(k_l=m_l\pi\) for \(l=2,3\), they have the form
    \begin{equation}\label{eq:UnobservedTMMode}
        \vvPhi_{m_2,m_3}
            = c_{m_2,m_3}\epsilon^{-1}(x_1)
                \sin(k_2x_2)\sin(k_3x_3)\,\vve_1,
        \qquad
        \beta_{0,m_2,m_3}^2 = \nu^{-1}(k_2^2+k_3^2),
    \end{equation}
    with nonzero real constants \(c_{m_2,m_3}\).
\end{lem}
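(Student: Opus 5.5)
The plan is to compute the tangential trace on \(F\) of every TE and TM mode from Theorem~\ref{thm:completeness}, and then determine, eigenspace by eigenspace, which linear combinations have vanishing trace. By \eqref{eq:AdaptedKernelRepresentation}, the unobserved modes span exactly \(\kernel(A_{\redstatespace}-\beta^2)\cap\setc{\vvPhi}{\vvPhi_\parallel=0}\). Theorem~\ref{thm:completeness} gives an orthogonal eigenbasis, so each eigenspace for \(\beta^2\) is spanned by the TE and TM modes with \(\beta_{n_1,m_2,m_3}^2=\beta^2\).

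\textbf{Traces of single modes.} For a TE mode the trace is \(w_{n_1}(\zeta)\) times the transverse field \((m_3\cos(k_2x_2)\sin(k_3x_3),-m_2\sin(k_2x_2)\cos(k_3x_3))\), which is nonzero since \(m_2+m_3>0\). For a TM mode the trace is \(\epsilon^{-1}\widetilde{\partial_1}\tilde w_{n_1}(\zeta)\) times \((k_2\cos\sin,k_3\sin\cos)\). This vanishes for \(n_1=0\), because \(\tilde w_0\) is constant. This gives exactly \eqref{eq:UnobservedTMMode}: the first component is \(\epsilon^{-1}\tilde w_0\sin\sin\) with \(\tilde w_0\) constant, and \(\beta^2=\nu^{-1}(k_2^2+k_3^2)\).

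\textbf{Non-resonance for \(n_1\ge1\).} The key claim is that \(w_n(\zeta)\neq0\) and \(\tilde w_n'(\zeta\pm)\neq0\) for all \(n\ge1\) when \(\zeta\notin\mathbb Q\). Suppose \(w_n(\zeta)=0\) with \(w_n\) from \eqref{splitting_eigenvector}. Then \(\sin(\sqrt\nu\lambda(\zeta+1))=0=\sin(\sqrt\nu\lambda(\zeta-1))\), since \(a_1,a_2\neq0\). Hence \(\sqrt\nu\lambda(\zeta\pm1)\in\pi\mathbb Z\), and the ratio \((\zeta+1)/(\zeta-1)\) is rational, which forces \(\zeta\in\mathbb Q\). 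The same argument applied to \(\cos\) in \eqref{splitting_eigenvector_Neumann} handles \(\tilde w_n'(\zeta)\). For the ratio I will use \(a_1,a_2\neq0\) and note that a zero of \(\sin\) on one side forces, through the transmission conditions, a zero of \(\sin\) on the other side.

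\textbf{Combinations within an eigenspace.} This is the main obstacle, because an eigenspace can mix several triples \((n_1,m_2,m_3)\). Take \(\vvPhi=\sum c\,\vvE^{\mathrm{TE}}+\sum d\,\vvE^{\mathrm{TM}}\) in a fixed eigenspace with \(\vvPhi_\parallel=0\). I expand the trace in the transverse system. TE traces are \(\curl_\perp\) of \(\cos(k_2x_2)\cos(k_3x_3)\), and TM traces are \(\nabla_\perp\) of \(\sin(k_2x_2)\sin(k_3x_3)\). These families are \(L^2\)-orthogonal to each other, since \((\curl_\perp a,\nabla_\perp b)=0\) when \(b\) vanishes on the boundary. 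Within each family they are orthogonal across distinct \((m_2,m_3)\).

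For fixed \((m_2,m_3)\) and fixed \(\beta^2\), the index \(n_1\) is determined uniquely, since \(\lambda_{n_1}\) is strictly increasing. Hence each coefficient times \(w_{n_1}(\zeta)\), respectively times \(\tilde w_{n_1}'(\zeta)\), must vanish. By the non-resonance step this kills every coefficient except those of the \(n_1=0\) TM modes. Those modes are already in the kernel and orthogonal, so the adapted basis can be chosen as stated.
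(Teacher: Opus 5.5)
Your proposal is correct and follows the paper's proof essentially step by step. Both arguments use the same three ingredients:
\begin{itemize}
\item the non-resonance argument $\zeta=(p-r)/(p+r)\in\mathbb{Q}$, which gives $w_n(\zeta)\neq 0$ and $\epsilon^{-1}\widetilde{\partial_1}\tilde w_n(\zeta)\neq 0$ for $n\geq 1$;
\item the explicit traces of the TE and TM modes;
\item exclusion of cancellations inside an eigenspace, via pairwise orthogonality of the trace profiles together with uniqueness of $n_1$ for fixed $(m_2,m_3)$ and $\beta^2$.
\end{itemize}
The only cosmetic difference is how TE--TM trace orthogonality is obtained. You derive it from $(\curl_\perp a,\nabla_\perp b)_{L^2}=0$ for $b$ vanishing on $\partial(0,1)^2$, whereas the paper uses componentwise orthogonality for distinct transverse indices and the identity $m_3k_2-m_2k_3=0$ for equal ones.
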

\begin{proof}
    Fix \(n\geq 1\) and put \(q_n=\sqrt{\nu}\lambda_n>0\).
    The coefficients on both subintervals in \eqref{splitting_eigenvector} and \eqref{splitting_eigenvector_Neumann} are nonzero: otherwise the transmission conditions would force both the value and derivative on the other side of the interface to vanish, giving the zero eigenfunction.

    If \(w_n(\zeta)=0\), then \eqref{splitting_eigenvector} yields
    \begin{equation}\label{eq:SineZeroCondition}
        q_n(\zeta+1)=p\pi,
        \qquad
        q_n(1-\zeta)=r\pi
        \qquad\text{for some }p,r\in\N.
    \end{equation}
    Consequently, \(\zeta=(p-r)/(p+r)\in\mathbb{Q}\), a contradiction.
    Thus, \(w_n(\zeta)\neq 0\).
    Similarly, the transmission condition for \(\widetilde{w}_n\) yield
    \[
        d_n
            =\bigl((\epsilon^{(1)})^{-1}\partial_1\tilde w_n\bigr)(\zeta^-)
            =\bigl((\epsilon^{(2)})^{-1}\partial_1\tilde w_n\bigr)(\zeta^+).
    \]
    If \(d_n=0\), the derivatives from both sides vanish, and \eqref{splitting_eigenvector_Neumann} leads to \eqref{eq:SineZeroCondition} and the same contradiction.
    Hence \(d_n\neq 0\) for every \(n\geq 1\), whereas \(d_0=0\).

    The tangential traces of the TE and TM modes are, up to nonzero constants,
    \begin{align*}
        \bigl(\vvE^{\mathrm{TE}}_{n_1,m_2,m_3}\bigr)_\parallel
            &\simeq w_{n_1}(\zeta)
                \begin{pmatrix}
                    m_3\cos(k_2x_2)\sin(k_3x_3)\\
                    -m_2\sin(k_2x_2)\cos(k_3x_3)
                \end{pmatrix},\\
        \bigl(\vvE^{\mathrm{TM}}_{n_1,m_2,m_3}\bigr)_\parallel
            &\simeq d_{n_1}
                \begin{pmatrix}
                    k_2\cos(k_2x_2)\sin(k_3x_3)\\
                    k_3\sin(k_2x_2)\cos(k_3x_3)
                \end{pmatrix}.
    \end{align*}
    Therefore, every TE mode and every TM mode with \(n_1\geq 1\) has nonzero tangential trace, while the TM modes with \(n_1=0\) have zero tangential trace.

    We must also exclude cancellation of nonzero traces within an eigenspace.
    Traces with distinct transverse indices \((m_2,m_3)\) are orthogonal in \(L^2(F)^2\).
    For fixed transverse indices with \(m_2,m_3\geq 1\), the TE and TM traces are orthogonal as well. Indeed, suppressing normalization constants as above, we obtain
    \begin{equation*}
        \Bigl(\bigl(\vvE^{\mathrm{TE}}_{n_1,m_2,m_3}\bigr)_\parallel,
            \bigl(\vvE^{\mathrm{TM}}_{n_1,m_2,m_3}\bigr)_\parallel\Bigr)_{L^2(F)^2}
            \simeq \frac{w_{n_1}(\zeta)\overline{d_{n_1}}}{4}
                (m_3k_2-m_2k_3)=0.
    \end{equation*}
    Finally, for a fixed eigenvalue \(\beta^2\) and fixed transverse indices, \eqref{eq:Wavenumbers_general} determines \(\lambda_{n_1}^2\) uniquely. The positive one-dimensional eigenvalues are simple by Lemma~\ref{lem:BDir}.
    Thus, the nonzero traces within each eigenspace are pairwise orthogonal.

    If \(\{\vvPsi_r\}_{r=1}^N\) denotes the listed TE and TM modes with a fixed eigenvalue \(\beta^2\), then
    \begin{equation*}
        \left\|\left(\sum_{r=1}^N a_r\vvPsi_r\right)_\parallel\right\|_{L^2(F)^2}^2
            = \sum_{r=1}^N |a_r|^2
                \bigl\|(\vvPsi_r)_\parallel\bigr\|_{L^2(F)^2}^2,
        \qquad a_1,\ldots,a_N\in\C.
    \end{equation*}
    Hence, a linear combination has zero trace only if every coefficient multiplying a mode with nonzero trace vanishes.
    By Theorem~\ref{thm:completeness}, in each eigenspace the kernel of the trace is exactly the span of its TM modes with \(n_1=0\).
    Since \(\tilde w_0\) is constant, \eqref{eq:TMModesgeneral} gives \eqref{eq:UnobservedTMMode}.
\end{proof}

We can now evaluate the decay characterization \eqref{eq:FirstOrderDecaySet} on these modes.

\begin{proof}[Proof of Theorem~\ref{thm:FirstOrderStability}]
    By \eqref{eq:FirstOrderDecaySet}, the solution for  \(\vvu_0=(\vvE_0,\vvH_0)\) decays if and only if \(\mathcal{P}_{\mathrm{u}}\vvu_0=0\).
    Using \eqref{eq:FirstOrderUnobservedProjection} and \eqref{eq:LiftedEigenfunctions}, this is equivalent to
    \begin{equation*}
        \bigl(\vvu_0,\vvTheta_j^\pm\bigr)_{\statespace}
            = \beta_j\bigl(\vvE_0,\vvPhi_j\bigr)_{L_\epsilon^2(Q)^3}
                \mp i\bigl(\vvH_0,\curl\vvPhi_j\bigr)_{L^2(Q)^3}
            = 0,
        \qquad j\in J_{\mathrm{u}}.
    \end{equation*}
    Here the factor \(\mu\) in the magnetic part of the state-space inner product cancels the factor \(\mu^{-1}\) in the lifted eigenfunction.
    Adding and subtracting the equations for the two signs, and using \(\beta_j>0\), gives the equivalent conditions
    \begin{equation}\label{eq:FirstOrderOrthogonalityConditions}
        \bigl(\vvE_0,\vvPhi_j\bigr)_{L_\epsilon^2(Q)^3}=0,
        \qquad
        \bigl(\vvH_0,\curl\vvPhi_j\bigr)_{L^2(Q)^3}=0,
        \qquad j\in J_{\mathrm{u}}.
    \end{equation}

    By Lemma~\ref{lem:UnobservedModes}, we may use the real representatives
    \(\vvPhi_{m_2,m_3}=c_{m_2,m_3}\epsilon^{-1}g_{m_2,m_3}\vve_1\), where
    \[
        g_{m_2,m_3}(x_2,x_3)=\sin(k_2x_2)\sin(k_3x_3),
        \qquad m_2,m_3\geq 1.
    \]
    For the electric condition, Fubini's theorem yields
    \begin{equation*}
        \bigl(\vvE_0,\vvPhi_{m_2,m_3}\bigr)_{L_\epsilon^2(Q)^3}
            = c_{m_2,m_3}\int_{(0,1)^2}
                \Bigl(\int_{-1}^{1}\vvE_0\cdot\vve_1\,\mathrm{d}x_1\Bigr)
                g_{m_2,m_3}(x_2,x_3)\,\mathrm{d}(x_2,x_3).
    \end{equation*}
    Thus the weight \(\epsilon\) cancels the factor \(\epsilon^{-1}\) in the unobserved mode.
    The longitudinal electric moment belongs to \(L^2((0,1)^2)\) by the Cauchy--Schwarz inequality.
    Since the functions \(g_{m_2,m_3}\) form a complete orthogonal system in \(L^2((0,1)^2)\), the electric orthogonality conditions are equivalent to
    \[
        \int_{-1}^{1}\vvE_0\cdot\vve_1\,\mathrm{d}x_1=0
        \qquad\text{a.e.\ on }(0,1)^2.
    \]

    For the magnetic condition, we compute
    \begin{equation*}
        \curl\vvPhi_{m_2,m_3}
            = c_{m_2,m_3}\epsilon^{-1}(x_1)
                \bigl(0,\partial_3g_{m_2,m_3},-\partial_2g_{m_2,m_3}\bigr).
    \end{equation*}
    This identity holds in all of \(Q\), since the jump of \(\vvPhi_{m_2,m_3}\) is purely normal to \(F\) and hence produces no interface term in the curl.
    Using \(\epsilon^{-1}=\mu/\nu\) and the moments \(h_l=\int_{-1}^{1}\mu(\vvH_0\cdot\vve_l)\,\mathrm{d}x_1\) from the theorem, Fubini's theorem gives
    \begin{align*}
        \bigl(\vvH_0,\curl\vvPhi_{m_2,m_3}\bigr)_{L^2(Q)^3}
            &= \frac{c_{m_2,m_3}}{\nu}\int_{(0,1)^2}
                \bigl(h_2\partial_3g_{m_2,m_3}
                    -h_3\partial_2g_{m_2,m_3}\bigr)\,\mathrm{d}(x_2,x_3)\\
            &= \frac{c_{m_2,m_3}}{\nu}
                \bigl\langle\partial_2h_3-\partial_3h_2,g_{m_2,m_3}\bigr\rangle.
    \end{align*}
    Here \(h_2,h_3\in L^2((0,1)^2)\), so \(\partial_2h_3-\partial_3h_2\in H^{-1}((0,1)^2)\), and the bracket denotes its action on \(H^1_0((0,1)^2)\).
    The span of the Dirichlet eigenfunctions \(g_{m_2,m_3}\) is dense in \(H^1_0((0,1)^2)\).
    Hence all magnetic pairings vanish if and only if
    \(\partial_2h_3-\partial_3h_2=0\) in \(\mathcal{D}'((0,1)^2)\).
    Together with the electric condition, this proves the equivalence with \eqref{eq:FirstOrderMomentConditions}.
\end{proof}

\subsection*{Acknowledgment}
Julian Dörner and Roland Schnaubelt are funded by the Deutsche Forschungsgemeinschaft (DFG, German Research Foundation) – Project-ID 258734477 – SFB 1173.
JD thanks the French Institute for Research in Computer Science and Automation (Inria) in Lille for the kind hospitality during his research stay. This research stay was financially supported by the Karlsruhe House of Young Scientists (KHYS).

\bibliographystyle{amsalpha}
\bibliography{literature} %

\end{document}